\numberwithin{equation}{section}
\theoremstyle{plain}
\newtheorem{thm}{Theorem}[section]
\newtheorem{lemma}{Lemma}[section]
\newtheorem{proposition}{Proposition}[section]
\newtheorem{corollary}{Corollary}[section]
\newtheorem{assumption}{Assumption}
\newtheorem*{assumption'}{Assumption $3'$}
\newtheorem{example}{Example}
\theoremstyle{definition}
\newtheorem{definition}{Definition}[section]
\theoremstyle{remark}
\newtheorem{remark}{Remark}[section]
\numberwithin{equation}{section}	
\newcommand{\gA}{\mathcal{A} }
\newcommand{\gG}{\mathcal{G} }
\newcommand{\gP}{\mathcal{P} }
\newcommand{\gN}{\mathcal{N} }
\newcommand{\gQ}{\mathcal{Q} }
\newcommand{\gL}{\mathcal{L} }
\newcommand{\gD}{\mathcal{D} }
\newcommand{\gF}{\mathcal{F} }
\newcommand{\Rmnum}[1]{\expandafter\@slowromancap\romannumeral #1@}
\author{Weiwei Guo\thanks{Department of Systems Engineering and Engineering Management, The Chinese University of Hong Kong, Shatin, Hong Kong. Email: wwguo@se.cuhk.edu.hk. }
\and
Lingfei Li\thanks{Department of Systems Engineering and Engineering Management, The Chinese University of Hong Kong, Shatin, Hong Kong. Email: lfli@se.cuhk.edu.hk. Corresponding author.}}
\begin{document}
\title{Parametric Inference for Discretely Observed Subordinate Diffusions\footnote{This research was supported by Hong Kong Research Grant Council GRF Grant No. 14205816.}}
\maketitle

\begin{abstract}
Subordinate diffusions are constructed by time changing diffusion processes with an independent L\'{e}vy subordinator. This is a rich family of Markovian jump processes which exhibit a variety of jump behavior and have found many applications. This paper studies parametric inference of discretely observed ergodic subordinate diffusions. We solve the identifiability problem for these processes using spectral theory and propose a two-step estimation procedure based on estimating functions. In the first step, we use an estimating function that only involves diffusion parameters. In the second step, a martingale estimating function based on eigenvalues and eigenfunctions of the subordinate diffusion is used to estimate the parameters of the L\'{e}vy subordinator and the problem of how to choose the weighting matrix is solved. When the eigenpairs do not have analytical expressions, we apply the constant perturbation method with high order corrections to calculate them numerically and the martingale estimating function can be computed efficiently. Consistency and asymptotic normality of our estimator are established considering the effect of numerical approximation. Through numerical examples, we show that our method is both computationally and statistically efficient. A subordinate diffusion model for VIX (CBOE volatility index) is developed which provides good fit to the data.
\end{abstract}

\bigskip
\hspace{0.3cm}\emph{Keywords}: diffusions, time change, subordinate diffusions, estimating functions,

\hspace{2.2cm}eigenfunctions.

\section{Introduction}\label{Sec:intro}
Diffusion processes have been widely used in applications, and statistical inference for them have been extensively studied. We refer readers to, for example, \cite{kutoyants2004statistical}, \cite{HSorensen04Survey}, \cite{AHS}, \cite{BibbyJacobsenSorensen} and \cite{KesslerLindnerSorensen12} for survey of various techniques in the literature. However, there are also many applications, especially in finance and economics, in which diffusion models are not adequate to describe the data due to the presence of jumps. See for example, \cite{ait2009testing}, \cite{ait2009estimating}, \cite{ait2011testing}, \cite{ait2012testing}, \cite{todorov2010activity}, \cite{TodorovTauchen} for various non-parametric methods to test existence of jumps and evidence for jumps in important applications.

A useful way to construct Markovian jump processes is to apply Bochner's subordination to diffusion processes. This classical technique, originally introduced in \cite{Bochner} in the semigroup context, corresponds to a stochastic time change using an independent nonnegative L\'{e}vy process (a.k.a., L\'{e}vy subordinator) as the random clock (a detailed account of Bochner's subordination can be found in \cite{schilling2012bernstein}, Chapter 13). Let $X$ be a time-homogeneous diffusion, and $T$ be a L\'{e}vy subordinator, independent of $X$. The time changed process $(X_{T_t})_{t\ge0}$ is called subordinate diffusion, and it is Markovian and time-homogeneous due to the the independent increment and stationary increment property of the L\'{e}vy subordinator, respectively. Since $T$ generally jumps, jumps are created in $X_T$. Depending on whether $T$ has drift or not, $X_T$ is a jump-diffusion or a pure jump process. Jumps of $X_T$ are in general state-dependent and could exhibit a variety of interesting behavior, making the time-changed process an appropriate model in many applications. For example, if $X$ is a mean-reverting diffusion, then jumps of $X_T$ are mean-reverting as well (see \cite{LiLinetsky}), and if $X$ moves in a finite interval, then $X_T$ does not jump outside the same interval. In addition, jumps of $X_T$ could have finite or infinite activity and finite or infinite variation. Some recent high-frequency non-parametric statistical analysis shows that some financial variables follow a pure jump process with infinite jump activity and infinite jump variation (see e.g., \cite{TodorovTauchen}) and subordinate diffusions provide natural parametric candidates for modeling them. Successful applications of subordinate diffusions have already been found in finance. See for example, \cite{MendozaCarrLinetsky,LiLinetsky,LLZVIX} and the discussions in \cite{LiLiMendozaAddSub,LiLinetskyOS,LiLinetskyFP}. But these references focus on option pricing. In the special case where $X$ is a Brownian motion, $X_T$ is a L\'{e}vy process (\cite{ContTankov}, Section 4.4) and hence one can view subordinate diffusions as a natural generalization of many L\'{e}vy processes by time changing more general diffusions.

This paper considers parametric inference for discretely observed ergodic subordinate diffusions, which has not been studied in the literature. Here only the value of the time-changed process $X_T$ is observed at a discrete set of times, and both the diffusion $X$ and the L\'{e}vy subordinator $T$ cannot be observed. This setting fits the applications we have in mind. For example, in \cite{LiLinetsky}, the subordinate Ornstein-Uhlenbeck process is used to model the commodity spot price, which is the only quantity that can be observed in practice. Our aim is to develop an estimation method that is both computationally and statistically efficient, and it is applicable for a general class of $X$ and $T$. We develop a two-step estimation procedure based on estimating functions that meets all these requirements. As an application, we show that a subordinate diffusion provides good fit to the historical data of VIX (CBOE Volatility Index). VIX is commonly regarded as the market's fear gauge and there are many actively traded derivatives written on VIX, which are very important investment and hedging tools (see the review in \cite{LLZVIX}). To fit the VIX data, diffusion models are proposed in \cite{goard2013stochastic}. We show that our subordinate diffusion model which contains jumps performs significantly better. In the rest of the introduction, we provide key background information for subordinate diffusions and discuss issues involved and related literature.

\subsection{Subordinate Diffusions}
Consider a diffusion process $X$ living on an interval $I$ with end-point $l$ and $r$ ($-\infty\le l<r\le \infty$). We denote its drift and diffusion coefficient by $\mu(x)$ and $\sigma(x)$, respectively, and its infinitesimal generator by $\mathcal{G}$, which is an operator defined on a dense subset of $L^2(I,q):=\{f\ \text{measurable}: \int_l^r f^2(x)q(x)dx<\infty\}$, where $q(x)$ is the stationary density of $X$ (see \eqref{eq:DiffStationary}). The following assumption on $X$ is made in this paper.
\begin{assumption}\label{assump:diff}
(1) $X$ is ergodic with stationary density
\begin{equation}\label{eq:DiffStationary}
q(x) = \frac{{m(x)}}{{\int_l^r {m(y)dy} }},\
m(x) = \frac{\exp \left[ {\int_{}^x { \frac{{2\mu (y)}}{{{\sigma^2}(y)}}} dy} \right]}{{{\sigma ^2}(x)}}.
\end{equation}
Here $m(x)$ is the speed density of $X$.

\noindent(2) The spectrum of $\mathcal{G}$ is purely discrete.

\noindent(3) $\mu(x)$ and $\sigma(x)$ are twice continuously differentiable and $\sigma(x)>0$ on any interval $[a,b] \subset I$.
\end{assumption}

Sufficient conditions for a diffusion to satisfy Assumption \ref{assump:diff} (1) can be found in \cite{kessler1999estimating}, Condition 4.1.
The purely discrete spectrum assumption holds for many ergodic diffusions used in applications, with well-known examples including the Ornstein-Uhlenbeck process, Feller's square-root process and the Jacobi process. \cite{linetsky2007spectral}, Theorem 3.2 provide conditions that imply purely discrete spectrum for diffusions (see also \cite{hansen1998spectral}). Assumption \ref{assump:diff} (3) is a technical condition that is needed in solving the identification problem for subordinate diffusions. Under Assumption \ref{assump:diff}, if $l\in I$, then $l$ must be a reflecting boundary, otherwise it is inaccessible. The same conclusion holds for $r$.

Next consider a L\'{e}vy subordinator $T$, which is a nonnegative L\'{e}vy process and assume that it is independent of $X$. Its Laplace transform is given by the well-known L\'{e}vy-Khintchine formula (e.g., \cite{ContTankov}, Eq.(4.5))
\begin{equation}\label{eq:LTLevySub}
E[e^{-\lambda T_t}]=e^{-\phi(\lambda)t},\
\phi(\lambda)=\gamma\lambda+\int_{(0,\infty)}(1-e^{-\lambda
s})\nu(ds),\ \lambda\ge 0,
\end{equation}
where $\gamma\geq0$ is the drift of $T$ and $\nu$ is called the L\'{e}vy
measure with $\int_{(0,\infty)}(s\wedge1)\nu(ds)<\infty$. The function $\phi(\lambda)$ is known as the Laplace exponent in the literature. A commonly used class of L\'{e}vy subordinators is the tempered stable family, in which
\begin{equation}\label{eq:TSLevyMeasure}
\nu(ds)=Cs^{-p-1}e^{-\eta s},\ C>0, 0<p<1, \eta>0.
\end{equation}
When $p=\frac{1}{2}$, the subordinator is the inverse Gaussian process (\cite{BarndorffNielsen}), a popular choice in finance. For tempered stable subordinators,
\begin{equation}\label{eq:TSLaplaceExponent}
\phi(\lambda)=\gamma\lambda - C\Gamma(-p)[(\lambda+\eta)^p-\lambda^p],
\end{equation}
where $\Gamma(\cdot)$ is the gamma function.

A subordinate diffusion $Y$ is defined as $Y_t=X_{T_t}$. In general, $Y$ is a jump-diffusion or a pure jump process depending on whether $\gamma$ is positive or zero. Denote the infinitesimal generator of the subordinate diffusion by $\mathcal{G}^\phi$. Then using the Phillips theorem (\cite{schilling2012bernstein}, Theorem 13.6), one can show that $\gD(\gG^\phi)\subseteq\gD(\gG)$, and for $f \in C_c^2(I)$ (a fully rigorous proof is given in \cite{LiLiMendozaAddSub}, Theorem 4.2),
\begin{align}
{\rm{\gG}}^\phi f(x)
&= \frac{1}{2}{({\sigma ^\phi }(x))^2}f''(x) + {\mu ^\phi }(x)f'(x) \notag \\
&+ \int_{z \ne 0} {\left( {f(x + z) - f(x) - {1_{\{ |z| \le 1\} }}zf'(x)} \right)} {\Pi ^\phi }(x,dz)
\end{align}
where for $t \ge 0$ and $x \in (l,r)$,
\begin{align}
&{\sigma ^\phi }(x) = \sqrt {\gamma } \sigma (x),\\
&{\mu ^\phi }(x) = \gamma \mu (x) + \int_{(0,\infty )} {\left( {\int_{\{ |z|  \le 1\} } {zp(\tau ,x,x + z)dz} } \right)} \nu(d\tau),\\
&{\Pi ^\phi }(x,dz) = {\pi ^\phi }(x,z)dz,\qquad
{\pi ^\phi }(x,z) = \int_{(0,\infty )} {p(\tau ,x,x + z)} \nu (d\tau ).\label{eq:subdiff-jump-measure}
\end{align}
Here $p(t,x,y)$ is the transition density of diffusion $X$ and we extend the definition of $p(t,x,y)$ to $y\notin I$ by defining $p(t,x,y)=0$.
It can be proved that ${\Pi ^\phi}(x,dz) $ is a L{\'e}vy-type measure, i.e., $\int_{z\ne 0}(z^2\wedge 1)\Pi^\phi(x,dz)<\infty$. The jump intensity $\pi^\phi(x,z)$ is clearly state dependent in general and could exhibit a variety of interesting behavior, making subordinate diffusions good candidates for jump modeling.

Let $\lambda_n\le 0$ be the $n$-th eigenvalue of the diffusion generator $\mathcal{G}$ and $\varphi_n(x)$ is the associated eigenfunction, i.e., $\mathcal{G}\varphi_n(x)=\lambda_n\varphi_n(x)$. Under Assumption \ref{assump:diff}, purely discreteness of the spectrum also implies that all eigenvalues are simple (\cite{linetsky2007spectral}, Theorem 3.2), so we have $0\ge \lambda_0>\lambda_1>\lambda_2>\cdots$. Let $\mathcal{P}_t$ be the transition operator of $X$, i.e., $\mathcal{P}_tf(x)=E_x[f(X_t)]$ and $\mathcal{P}_t$ is defined on $L^2(I,q)$. Then the spectrum of $\mathcal{P}_t$ is also discrete and $\mathcal{P}_t\varphi_n(x)=e^{\lambda_nt}\varphi_n(x)$ ($\mathcal{P}_t$ and $\mathcal{G}$ share the same set of eigenfunctions). We normalize $\varphi_n(x)$ such that $\int_l^r \varphi^2_n(x)q(x)dx=1$, where $q(x)$ is the diffusion stationary density defined in \eqref{eq:DiffStationary}. We also have $\int_l^r \varphi_n(x)\varphi_m(x)q(x)dx=0$ for $n\ne m$, that is, different eigenfunctions are orthogonal w.r.t. $q(x)$. The set of eigenfunctions $\{\varphi_n(x):n=0,1,2,\cdots\}$ forms an orthonormal basis of $L^2(I,q)$.

A key observation that will be used in developing the estimation method is that for $Y$, the spectrum of its generator $\mathcal{G}^\phi$ (defined on a dense subset of $L^2(I,q)$) and its transition operator $\mathcal{P}^\phi_t$ (defined on $L^2(I,q)$) are also purely discrete, and
\begin{equation}\label{eq:SubDiffGenEigen}
\mathcal{G}^\phi\varphi_n(x)=-\phi(-\lambda_n)\varphi_n(x),\ \mathcal{P}^\phi_t\varphi_n(x)=e^{-\phi(-\lambda_n)t}\varphi_n(x).
\end{equation}
This equation shows that subordination preserves the set of eigenfunctions and only changes the eigenvalues using the Laplace exponent of the subordinator. The proof of this fact can be found in \cite{linetsky2007spectral}, p.283. Let $p^\phi(t,x,y)$ be the transition density of $Y$. \cite{LiLinetskyFP}, Proposition 2.4 shows that, under the condition $\sum_{n=0\infty}e^{\lambda_nt}<\infty$, if either $\gamma>0$ or when $\gamma=0$, $\varphi_n(x)$ is bounded on any compact set of $x$ for all $n$ and $\sum_{n=0}^\infty e^{-\phi(-\lambda_n)t}<\infty$, then $p^\phi(t,x,y)$ admits the following bilinear eigenfunction expansion which converges uniformly on compacts for $x$ and $y$:
\begin{equation}\label{eq:SubDiffTPD}
p^\phi(t,x,y)= q(y)\sum_{n = 0}^\infty e^{-\phi(-\lambda_n)t}\varphi_n(x)\varphi_n(y).
\end{equation}
Lemma \ref{lemma:ergodicity} in Section 2 shows that under Assumption \ref{assump:diff}, $Y$ is ergodic and $p^\phi(t,x,y)$ converges to $q(y)$ as $t\to\infty$. As for $n\ge 1$, $\lambda_n<0$, we have $e^{-\phi(-\lambda_n)t}\varphi_n(x)\varphi_n(y)\to 0$ as $t\to\infty$. Subsequently, we must also have
\begin{equation}\label{eq:0-th-eigen}
\varphi_0 (x)\equiv 1,\ \lambda_0=0,
\end{equation}
for $p^\phi(t,x,y)$ to converge to $q(y)$.

\subsection{Issues and Related Literature}
The first issue we need to address is identifiability of subordinate diffusions. In general, one cannot hope to identify $X$ and $T$ uniquely given only the data of $Y$ (an example is given in Section \ref{sec:identification}). Using spectral theory, we show that the characteristics of the diffusion and the subordinator can be identified up to scale. This implies that to estimate the parameters of $Y$, the scale needs to be fixed first, but the law of $Y$ does not change when the scale varies.

The transition density $p^\phi(t,x,y)$ is given by \eqref{eq:SubDiffTPD}. In general, $\lambda_n$ and $\varphi_n(x)$ are unknown so the method of maximum likelihood estimation cannot be applied. Even when explicit expressions are available for them, computing the expansion for $p^\phi(t,x,y)$ can be demanding especially when the time step between two observations is small. For example, when $Y$ is the subordinate Ornstein-Uhlenbeck process (\cite{LiLinetsky}), $\lambda_n$ and $\varphi_n(x)$ are known, but in our numerical experiment it could take several thousand or even over 10,000 terms for the partial sum in \eqref{eq:SubDiffTPD} to converge to an acceptable level of accuracy when $t$ is one day. In financial applications, typically daily or even higher frequency data is used.

We propose an estimation method for subordinate diffusions based on estimating functions. An overview of the estimating function approach for diffusions can be found in \cite{SorensenReview}, \cite{BJSEstFun}. It is shown that this is a statistically efficient method for diffusions if the estimating functions are appropriately chosen. In particular, when analytical expressions for $\lambda_n$ and $\varphi_n(x)$ are available,  \cite{kessler1999estimating} (hereafter KS) propose to construct martingale estimating functions based on the eigenfunctions and they work well in applications (see \cite{larsen2007diffusion} for the application of this method to estimate the dynamics of exchange rates in a target zone). For subordinate diffusions, if we have analytical formulas for $\lambda_n$ and $\varphi_n(x)$, we can directly apply the KS idea to construct martingale estimating functions based on \eqref{eq:SubDiffGenEigen}, but compared to estimating $X$, $Y$ has more parameters so computation takes longer time. When $\lambda_n$ and $\varphi_n(x)$ do not have analytical expressions, since there exist numerical algorithms for the Sturm-Liouville problem that can achieve high level of accuracy, we can compute $\lambda_n$ and $\varphi_n(x)$ numerically. If the KS approach were followed, numerical computation of $\lambda_n$ and $\varphi_n(x)$ is needed in every iteration (note that the estimator is found by solving an equation through iterations), which is time-consuming.

This observation leads us to propose a two-step procedure that is computationally much more efficient. In Step 1, we use estimating functions proposed in \cite{conley1997short} to estimate diffusion parameters (up to scale). \cite{conley1997short} considers how to estimate the parameters of a diffusion under random sampling (but they do not estimate the parameters associated with the random sampling scheme). They propose estimating functions that only involve diffusion parameters using the randomly sampled data. Since deterministically sampled data of a subordinate diffusion can be viewed as randomly sampled data of the background diffusion, we can apply their estimating functions, and the estimator can be computed fast. In Step 2, we estimate the subordinator parameters using the eigenfunction-based estimating function $(1/N)\sum_{n=1}^N\sum_{i=1}^Mw_i(y_{n-1},\theta_S)(\varphi_i(y_n)-e^{-\phi(-\lambda_i,\theta_S)}\varphi_i(y_{n-1}))$, where $\{y_0,y_1,\cdots,y_N\}$ is the data for $Y$, $\theta_S$ is the vector of subordinator parameters and $w_i$ is a column vector with the same length as $\theta_S$. Since the diffusion parameters have been estimated in Step 1, all $\lambda_i$ and $\varphi_i$ are determined and do not change in iterations to find the estimator  $\hat{\theta}_S$. How to choose $W:=(w_1,\cdots,w_M)$ is important for the method's statistical efficiency. We can set $W$ according to the ``optimal weight'' formula in \cite{kessler1999estimating} (hereafter KS weight). If the diffusion parameters estimated in Step 1 are the true values, the KS weight is optimal in the sense that the covariance matrix for $\hat{\theta}_S$ is minimized. Since diffusion parameters are estimated and thus contain errors, the KS weight is not optimal. In this paper we obtain the formula for the optimal weight, which is nevertheless difficult to compute. In our simulation study, we numerically compare the standard error for $\theta_S$ under the optimal weight and the KS weight. We find that the results are very close. Given the ease of calculating the KS weight, we use it in our method instead of the optimal one. The idea of combining different types of estimating functions is also used in \cite{bibby2001simplified} for estimating a discretely observed diffusion with a high-dimensional parameter. There, simple estimating functions proposed by \cite{kessler2000simple} and martingale estimating functions developed in \cite{bibby1995martingale} are combined to simplify the estimation procedure.

Methodologically, our paper improves \cite{kessler1999estimating} and \cite{bibby2001simplified} in the following aspects.
\begin{itemize}
\item[(1)] \cite{kessler1999estimating} only considers the situation where eigenvalues and eigenfunctions are analytically known. We deal with the general case with unknown eigenvalues and eigenfunctions, and show how to calculate the eigenfunction-based martingale estimating function numerically in an efficient way. We also develop consistency and asymptotic normality results considering the effect of numerical approximation. These results are not directly implied by the existing asymptotic theory for estimating functions which assumes that they can be computed exactly.

\item[(2)] \cite{bibby2001simplified} did not address the issue of obtaining the optimal weighting matrix for the martingale estimating function when it is combined with other estimating functions. We solved this problem in our context.
\end{itemize}

The present work is related to a growing literature on estimating time-changed L\'{e}vy processes, which are also constructed by time change and are very popular in modeling asset prices (see \cite{carr2004time}). A time-changed L\'{e}vy process is constructed as $L_{T_t}$, where $L$ is a L\'{e}vy process with L\'{e}vy measure $\ell$ and $T_t=\int_{0}^t A_sds$. Such time change is absolutely continuous, and the process $A$ is often called the activity rate process, which is used to introduce stochastic volatility into the L\'{e}vy model. We do not attempt to survey the rather extensive literature on the estimation of time-changed L\'{e}vy processes, but instead mention a few works. See, e.g., \cite{figueroa2009nonparametric,figueroa2011central}, \cite{belomestny2011statistical} for non-parametric estimation of $\ell$, and \cite{bull2014estimating} for inference of $A_s$, among other works. Subordinate diffusions and time-changed L\'{e}vy processes are constructed using different background processes (diffusions for the former and L\'{e}vy processes for the latter) and different time changes (L\'{e}vy subordinators are used as the time change for the former, which are not absolutely continuous). Thus, subordinate diffusions generally do not belong to the class of time-changed L\'{e}vy processes. In addition, subordinate diffusions exhibit richer jump behavior than time-changed L\'{e}vy processes because they could have state-dependent jumps (the compensator of the random jump measure of $Y$ is $\Pi^\phi(Y_{t-},dz)dt$; see the expression for $\Pi^\phi(x,dz)$ in \eqref{eq:subdiff-jump-measure}), while jumps in time-changed L\'{e}vy processes are state-independent (the compensator of the random jump measure of $L_T$ is $A_t\ell(dz)dt$, which is independent of $L_{T_{t-}}$). Unlike time-changed L\'{e}vy processes, which have stochastic volatility and can generate the volatility clustering phenomenon, subordinate diffusions do not possess such a feature. To incorporate it, one can further time change a subordinate diffusion by an absolutely continuous process in the form of $\int_{0}^t A_sds$ (see \cite{LiLinetsky}). How to estimate these time-changed subordinate diffusions is an interesting problem for future research.

\subsection{Organization of the Paper}
The rest of the paper is organized as follows. Section \ref{sec:identification} solves the identification problem for subordinate diffusions. In Section \ref{sec:Estimation}, we present the two-step procedure to estimate subordinate diffusions by first assuming that $\lambda_n$ and $\varphi_n(x)$ are known and derive the optimal weight for the eigenfunction-based martingale estimating function. We then consider the general situation in which $\lambda_n$ and $\varphi_n(x)$ are unknown and we show how to obtain the estimator using the numerical approximation of $\lambda_n$ and $\varphi_n(x)$. In Section \ref{sec:CAN}, we develop asymptotic analysis of our estimators considering the effect of numerical approximations. Under regularity conditions, we show that our estimator is consistent and asymptotically normal. Section \ref{sec:num} contains various numerical examples and an application to VIX data. Proofs are collected in the appendix except those in Section \ref{sec:CAN}. Section \ref{sec:conclusions} provides a summary and discusses future research.

To conclude the introduction, we fix some notations. $'$ denotes the transpose of a vector or a matrix. For a $M\times 1$ vector function $F$ and a $p\times 1$ parameter $\theta$, $\partial_\theta F$ is a $M\times p$ matrix with element $(\partial_\theta F)_{i,j}=\partial_{\theta_j}F_i $. In particular, when $F$ is a scalar function, $\partial_\theta F$ is a row vector.

\section{Identifiability of Subordinate Diffusions}\label{sec:identification}
Since we are only given the data of $Y$, it is expected that $X$ and $T$ cannot be uniquely identified. Below we present an example.
\begin{example}\label{eg:IG-SubOU}
Let $X$ be an Ornstein-Uhlenbeck (OU) diffusion, i.e.,
\begin{equation}\label{eq:OU}
dX_t=\kappa(\vartheta-X_t)dt+\sigma dW_t,
\end{equation}
with $\kappa,\sigma>0$. Its stationary density is given by
\begin{equation}\label{eq:OUStationary}
q(x) = \sqrt {\frac{\kappa }{{\pi {\sigma ^2}}}} {e^{ - \kappa \frac{{{{(x - \vartheta )}^2}}}{{{\sigma ^2}}}}}.
\end{equation}
It is well known that for the OU process (\cite{karlin1981second}),
\begin{equation}\label{eq:OU-eigen}
\lambda_n = -\kappa n,\ \varphi_n(x)= \frac{1}{{\sqrt {{2^n}n!} }}H_n\left(\frac{\sqrt{\kappa}}{\sigma}(x-\vartheta)\right),\ n=0,1,\cdots,
\end{equation}
where $H_n(x)$ is the Hermite polynomial of order $n$, and $\varphi_n(x)$ satisfies $\int_{\mathbb{R}}\varphi_n^2(x)q(x)dx=1$.

Let $T$ be an inverse Gaussian subordinator with drift $\gamma$. Its L\'{e}vy measure is given by $\nu(ds)=Cs^{-\frac{3}{2}}e^{-\eta s}ds$ with $C>0$, $\eta>0$, and $\phi(\lambda)=\gamma\lambda - C\Gamma(-\frac{1}{2})[\sqrt{\lambda+\eta}-\sqrt{\lambda}]$. We call $Y$ an IG-SubOU process for short.

In this case one can verify the conditions in Proposition 2.4 of \cite{LiLinetskyFP}, so we have the bilinear eigenfunction expansion \eqref{eq:SubDiffTPD} for $p^\phi(t,x,y)$. Using the explicit expressions for $\lambda_n$, $\varphi_n(x)$, $\phi(\lambda)$, and the expansion, it is easy to verify that for any $c>0$, $(\vartheta,c\kappa,\sqrt{c}\sigma,\gamma/c,C/\sqrt{c},c\eta)$ gives the same $p^\phi(t,x,y)$.
\end{example}

Let $\theta (x)=(\mu (x),\sigma^2 (x))$. For the L\'{e}vy measure $\nu$ of $T$, define
\begin{equation}\label{eq:omega}
\omega(s):=\nu(s,\infty),\quad \hat{\omega}(\lambda)=\int_{0}^{\infty}e^{-\lambda s}\omega(s)ds,\ \lambda>0.
\end{equation}
We call $(\theta(x),\gamma,\hat{\omega}(\lambda))$ the characteristics triplet of $Y$. Example \ref{eg:IG-SubOU} already shows that in the case of IG-SubOU process, given a characteristics triplet, appropriate scaling does not alter the law of the process. This observation holds more generally provided that we have the bilinear eigenfunction expansion for $p^\phi(t,x,y)$. Furthermore, using spectral theory, we show that given two characteristics triplets, that they yield the same transition probability density implies that they are related by appropriate scaling. We need the following lemma on the ergodicity of subordinate diffusions, which will also be used in proving consistency of our estimator.

\begin{lemma}\label{lemma:ergodicity}
The continuous time process $\{Y_t,t\ge 0\}$ and the sampled process $\{Y_{t_i}:t_i=i\Delta ,i=0,1,\cdots\}$ are ergodic under Assumption \ref{assump:diff}. The stationary density of $Y$ is given by $q(x)$ defined in \eqref{eq:DiffStationary}.
\end{lemma}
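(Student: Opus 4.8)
The plan is to prove the lemma in two stages. First I would check that the diffusion stationary density $q$ in \eqref{eq:DiffStationary} is an invariant density for $Y$, and then use the spectral picture recorded in \eqref{eq:SubDiffGenEigen} to show that this invariant density is the \emph{only} one and that the process is ergodic in the sense needed for the law of large numbers. The sampled chain $\{Y_{t_i}\}$ is a time‑homogeneous Markov chain with one‑step transition operator $\mathcal{P}^\phi_\Delta$, so every step below applies to it verbatim with $\mathcal{P}^\phi_\Delta$ in place of $\mathcal{P}^\phi_t$.

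For invariance: since $T$ is independent of $X$, conditioning on the clock gives $E_x[f(Y_t)]=E_x[f(X_{T_t})]=\int_{[0,\infty)}E_x[f(X_\tau)]\,\mu_t(d\tau)$, where $\mu_t=\mathrm{Law}(T_t)$; equivalently $\mathcal{P}^\phi_t f=\int_{[0,\infty)}\mathcal{P}_\tau f\,\mu_t(d\tau)$, $\mathcal{P}_\tau$ being the transition operator of $X$. By Assumption \ref{assump:diff}(1), $q$ is the stationary density of $X$, so $\int_l^r(\mathcal{P}_\tau f)(x)q(x)\,dx=\int_l^r f(x)q(x)\,dx$ for every $\tau\ge0$ and every bounded measurable $f$. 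Integrating this identity against $\mu_t$ (Fubini is legitimate because $|\mathcal{P}_\tau f|\le\|f\|_\infty$) yields $\int_l^r(\mathcal{P}^\phi_t f)(x)q(x)\,dx=\int_l^r f(x)q(x)\,dx$, i.e.\ $q$ is invariant for $\{Y_t\}$ and, taking $t=\Delta$, for $\{Y_{t_i}\}$. (The same identity also drops out of \eqref{eq:SubDiffTPD} using $\varphi_0\equiv1$ from \eqref{eq:0-th-eigen} and $\int_l^r\varphi_n\varphi_0\,q\,dx=\delta_{0n}$.)

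Next I would extract the spectral gap. By \eqref{eq:SubDiffGenEigen}, for every $t>0$ the operator $\mathcal{P}^\phi_t$ is diagonalized by the orthonormal basis $\{\varphi_n\}$ of $L^2(I,q)$ with eigenvalues $e^{-\phi(-\lambda_n)t}$, hence it is a self‑adjoint contraction with purely discrete spectrum. Since $\lambda_0=0$ and $\phi(0)=0$, the $n=0$ eigenvalue equals $1$, with eigenfunction $\varphi_0\equiv1$. For $n\ge1$ we have $-\lambda_n>0$, and since $\phi$ in \eqref{eq:LTLevySub} is strictly increasing on $[0,\infty)$ with $\phi(0)=0$ (its derivative is $\gamma+\int_{(0,\infty)}s e^{-\lambda s}\nu(ds)>0$ for any non‑degenerate subordinator), $\phi(-\lambda_n)>0$ and $e^{-\phi(-\lambda_n)t}<1$. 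Therefore $1$ is a \emph{simple} eigenvalue of $\mathcal{P}^\phi_t$, isolated from the rest of the spectrum; equivalently, the only $\mathcal{P}^\phi_t$‑invariant functions in $L^2(I,q)$ are the constants and $\mathcal{P}^\phi_t g\to\int_l^r g\,dq$ in $L^2(I,q)$ for every $g\in L^2(I,q)$. In particular the $n=0$ term is the only surviving term, so $p^\phi(t,x,y)\to q(y)$. This already pins down $q$ as the unique invariant probability density.

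Finally, it is a classical fact that for a stationary reversible Markov process whose transition operator has $1$ as a simple eigenvalue in $L^2$ of the invariant law, the shift‑invariant $\sigma$‑field is trivial, so the stationary process is ergodic in the measure‑theoretic sense; Birkhoff's ergodic theorem then gives $\tfrac1T\int_0^T g(Y_s)\,ds\to\int_l^r g\,dq$ a.s.\ and $\tfrac1N\sum_{i=1}^N g(Y_{t_i})\to\int_l^r g\,dq$ a.s.\ for $g\in L^1(I,q)$ under the stationary initial law. The step I expect to require the most care is upgrading these almost‑sure statements from the stationary start to an arbitrary initial value $Y_0=x$, which is what the consistency proofs actually need. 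For that I would establish $q$‑irreducibility and positive Harris recurrence of $\{Y_{t_i}\}$, which follow from strict positivity of $p^\phi(t,x,y)$ on $I\times I$: writing $p^\phi(t,x,y)=\int_{[0,\infty)}p(\tau,x,y)\,\mu_t(d\tau)$, the background transition density $p(\tau,x,y)$ is strictly positive for $\tau>0$ and $x,y\in I$ under Assumption \ref{assump:diff}, and $\mu_t$ charges an interval of $\tau$‑values, so the integral is strictly positive; together with the finite invariant measure $q$ this gives positive Harris recurrence and hence the ergodic theorem for every starting point. (The degenerate pure‑drift case $T_t=\gamma t$, $\nu\equiv0$, in which $Y$ is a deterministic time change of $X$, is immediate, and $T\equiv0$ is excluded.)
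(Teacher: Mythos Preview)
Your proposal is correct and in spirit very close to the paper's own argument, but the packaging differs. The paper dispatches both the continuous-time and sampled cases by invoking the generator-level criterion of Hansen and Scheinkman (Propositions 6 and 7 in \cite{hansen1995back}): $\{Y_t\}$ is ergodic iff $\gG^\phi f=0$ for $f\in\gD(\gG^\phi)\cap\gL(q)$ forces $f=0$. They verify this by noting that any such $f$ would be an eigenfunction of $\gG$ (since $\gG$ and $\gG^\phi$ share eigenfunctions), and $\phi(-\lambda)=0$ implies $\lambda=0$, contradicting the assumed ergodicity of $X$. The invariance of $q$ is shown exactly as you do, by conditioning on the clock and applying Fubini.

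Your route is the transition-operator version of the same spectral fact: $1$ is a simple eigenvalue of $\gP^\phi_t$ on $L^2(I,q)$, separated from the rest of the spectrum because $\phi$ is strictly increasing. What you add beyond the paper is the explicit upgrade to arbitrary initial distributions via strict positivity of $p^\phi(t,x,y)$ and Harris recurrence; the paper leaves this implicit in the cited Hansen--Scheinkman framework. One small caveat in your positivity argument: if $T$ is a compound Poisson subordinator ($\gamma=0$, finite $\nu$), the law of $T_t$ has an atom at $0$, so $p^\phi(t,x,\cdot)$ is a sub-probability density plus a Dirac at $x$; your integral argument still gives strict positivity of the absolutely continuous part and hence $q$-irreducibility, but the phrasing ``the integral is strictly positive'' should be read for the density component. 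This does not affect the conclusion.
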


\begin{thm} \label{thm:idenPPS}
Consider two characteristics triplet $(\theta_i(x),\gamma_i,\hat{\omega}_i(\lambda))$ ($i=1,2$) and denote the corresponding transition probability density by $p^\phi_i(t,x,y)$. Under Assumption \ref{assump:diff}, $p^\phi_1(t,x,y)$ and $p^\phi_2(t,x,y)$ are identical implies that there exists a constant $c > 0$ such that
\begin{equation}\label{eq:idenPPSConditions}
\theta_1(x)=\frac{1}{c}\theta_2(x),\ \gamma_1=c\gamma_2,\ \hat{\omega}_1(-\lambda_{n})=c\hat{\omega}_2(-c\lambda_{n})\ \text{for all}\ n,
\end{equation}
where $\lambda_{n} $ is the $n$-th eigenvalue of the generator of the diffusion with characteristic $\theta_1(x)$. Now, suppose that \eqref{eq:idenPPSConditions} holds for some constant $c>0$. Under Assumption \ref{assump:diff} and the condition $\sum_{n=0}^\infty e^{\lambda_nt}<\infty$, if either $\gamma>0$ or when $\gamma=0$, $\varphi_n(x)$ is bounded on any compact set of $x$ for all $n$ and $\sum_{n=0}^\infty e^{-\phi(-\lambda_n)t}<\infty$,
then $p^\phi_1(t,x,y)$ and $p^\phi_2(t,x,y)$ are identical.
\end{thm}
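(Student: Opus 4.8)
The final statement splits into a ``hard'' implication (equal transition densities force the scaling \eqref{eq:idenPPSConditions}) and an ``easy'' converse, and the plan treats them separately.

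\textbf{The converse.} Suppose \eqref{eq:idenPPSConditions} holds. Since $\theta_1=\tfrac{1}{c}\theta_2$ means $\sigma_1^2=\tfrac{1}{c}\sigma_2^2$ and $\mu_1=\tfrac{1}{c}\mu_2$, the diffusion generators satisfy $\mathcal{G}_1=\tfrac{1}{c}\mathcal{G}_2$, so they share all eigenfunctions $\varphi_n$, their eigenvalues obey $\lambda_n^{(2)}=c\lambda_n$, and --- because $2\mu_1/\sigma_1^2=2\mu_2/\sigma_2^2$ gives $m_1=c\,m_2$ --- they have the same stationary density $q$ (so the $\varphi_n$ are normalised in one $L^2(I,q)$). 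Integration by parts in \eqref{eq:LTLevySub} turns it into $\phi(\lambda)=\gamma\lambda+\lambda\hat{\omega}(\lambda)$ for $\lambda>0$ (with $\hat\omega$ as in \eqref{eq:omega}), and substituting $\gamma_1=c\gamma_2$, $\hat{\omega}_1(-\lambda_n)=c\hat{\omega}_2(-c\lambda_n)$ gives $\phi_1(-\lambda_n)=\phi_2(-\lambda_n^{(2)})$ for every $n$. The summability/boundedness hypotheses involve only the $\lambda_n$, $c\lambda_n$ and the common $\varphi_n$, hence hold for both triplets; then the bilinear expansion \eqref{eq:SubDiffTPD} for $p^\phi_1$ and $p^\phi_2$ agrees term by term, i.e. $p^\phi_1\equiv p^\phi_2$.

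\textbf{The hard implication, reduction.} Now assume $p^\phi_1\equiv p^\phi_2$. By Lemma \ref{lemma:ergodicity}, $p^\phi_i(t,x,y)\to q_i(y)$ as $t\to\infty$, so $q_1=q_2=:q$; hence both $\mathcal{P}^\phi_t$ act on the same $L^2(I,q)$ and, having equal kernels, coincide as self-adjoint operators. Equal operators have equal spectra and equal eigenspaces. The spectrum of $\mathcal{P}^\phi_t$ for triplet $i$ is $\{e^{-\phi_i(-\lambda_n^{(i)})t}:n\ge0\}$, a strictly decreasing sequence (because $\phi_i$ is strictly increasing and the $\lambda_n^{(i)}$ are simple), so matching the two sequences in order and letting $t$ vary gives $\phi_1(-\lambda_n^{(1)})=\phi_2(-\lambda_n^{(2)})$ for all $n$; the one-dimensional eigenspaces give $\varphi_n^{(1)}=\pm\varphi_n^{(2)}$, and after fixing signs I write $\varphi_n$ for the common eigenfunction. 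It remains to extract $c$ from the two facts that $\mathcal{G}_1,\mathcal{G}_2$ share the eigenbasis $\{\varphi_n\}$ and that $\phi_1(-\lambda_n^{(1)})=\phi_2(-\lambda_n^{(2)})$.

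\textbf{The hard implication, diffusion part.} From $\mathcal{G}_i\varphi_n=\lambda_n^{(i)}\varphi_n$, i.e. $\tfrac{1}{2}\sigma_i^2\varphi_n''+\mu_i\varphi_n'=\lambda_n^{(i)}\varphi_n$, taking $n=1,2$ yields, at each $x$,
\begin{equation*}
\begin{pmatrix}\varphi_1''(x)&\varphi_1'(x)\\\varphi_2''(x)&\varphi_2'(x)\end{pmatrix}\begin{pmatrix}\tfrac{1}{2}\sigma_i^2(x)\\\mu_i(x)\end{pmatrix}=\begin{pmatrix}\lambda_1^{(i)}\varphi_1(x)\\\lambda_2^{(i)}\varphi_2(x)\end{pmatrix},\qquad i=1,2.
\end{equation*}
By Assumption \ref{assump:diff}(3) the coefficients are $C^2$ and $\sigma_i>0$, so the $\varphi_n$ are $C^2$ and the Wronskian $W:=\varphi_1''\varphi_2'-\varphi_2''\varphi_1'=\tfrac{2}{\sigma_1^2}(\lambda_1^{(1)}\varphi_1\varphi_2'-\lambda_2^{(1)}\varphi_2\varphi_1')$ is not identically zero: at the unique interior zero of $\varphi_1$ one has $W=-\tfrac{2\lambda_2^{(1)}}{\sigma_1^2}\varphi_2\varphi_1'\neq0$ since $\lambda_2^{(1)}\neq0$, $\varphi_1'\neq0$ (simple zero), and $\varphi_2\neq0$ there (interlacing of zeros of consecutive eigenfunctions). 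On the open set $\{W\neq0\}$ I invert the system, solve for $\tfrac{1}{2}\sigma_i^2,\mu_i$, substitute into the $n=3$ equation $\tfrac{1}{2}\sigma_i^2\varphi_3''+\mu_i\varphi_3'=\lambda_3^{(i)}\varphi_3$, and --- writing $(\varphi_3',\varphi_3'')=y_1(\varphi_1',\varphi_1'')+y_2(\varphi_2',\varphi_2'')$ for the ($x$-dependent) coefficients $y_1,y_2$ --- reduce it to $y_1\lambda_1^{(i)}\varphi_1+y_2\lambda_2^{(i)}\varphi_2=\lambda_3^{(i)}\varphi_3$ for $i=1,2$. Regarded as a $2\times2$ system in $(y_1\varphi_1,y_2\varphi_2)$ with coefficient matrix $\bigl(\begin{smallmatrix}\lambda_1^{(1)}&\lambda_2^{(1)}\\\lambda_1^{(2)}&\lambda_2^{(2)}\end{smallmatrix}\bigr)$, its determinant $\lambda_1^{(1)}\lambda_2^{(2)}-\lambda_2^{(1)}\lambda_1^{(2)}$ must vanish: otherwise $y_1\varphi_1=\alpha\varphi_3$, $y_2\varphi_2=\beta\varphi_3$ with constants $\alpha,\beta$, which together with $y_1\varphi_1'+y_2\varphi_2'=\varphi_3'$ gives $\alpha(\log\varphi_1)'+\beta(\log\varphi_2)'=(\log\varphi_3)'$, hence $\varphi_3=C\varphi_1^\alpha\varphi_2^\beta$; plugging this back into the eigenfunction equations for $\varphi_1,\varphi_2,\varphi_3$ forces a pointwise identity incompatible with the oscillation properties of the $\varphi_n$. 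Thus $\rho:=\lambda_1^{(1)}/\lambda_1^{(2)}=\lambda_2^{(1)}/\lambda_2^{(2)}>0$, and $\mathcal{D}:=\mathcal{G}_1-\rho\mathcal{G}_2$ annihilates $\varphi_0,\varphi_1,\varphi_2$, which are linearly independent on every open interval; a second-order operator with nonvanishing leading coefficient has two-dimensional kernel, so $\sigma_1^2-\rho\sigma_2^2\equiv0$, and then $\mathcal{D}\varphi_1=0$ forces $\mu_1-\rho\mu_2\equiv0$. Hence $\mathcal{G}_1=\rho\mathcal{G}_2$, $\lambda_n^{(1)}=\rho\lambda_n^{(2)}$ for all $n$, and $c:=1/\rho$ gives $\theta_1=\tfrac{1}{c}\theta_2$ and $\lambda_n^{(2)}=c\lambda_n$.

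\textbf{The hard implication, subordinator part, and the main obstacle.} Insert $\lambda_n^{(2)}=c\lambda_n$ into $\phi_1(-\lambda_n^{(1)})=\phi_2(-\lambda_n^{(2)})$, use $\phi_i(\lambda)=\gamma_i\lambda+\lambda\hat{\omega}_i(\lambda)$, and divide by $-\lambda_n>0$ ($n\ge1$) to get $\gamma_1+\hat{\omega}_1(-\lambda_n)=c\gamma_2+c\hat{\omega}_2(-c\lambda_n)$. Since $\hat{\omega}_i(\lambda)\to0$ as $\lambda\to\infty$ (dominated convergence: $\omega_i\in L^1(0,1)$ and $\omega_i$ is bounded on $[1,\infty)$) and $-\lambda_n\to\infty$, letting $n\to\infty$ yields $\gamma_1=c\gamma_2$, hence $\hat{\omega}_1(-\lambda_n)=c\hat{\omega}_2(-c\lambda_n)$ for all $n$, which is \eqref{eq:idenPPSConditions}. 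The genuinely delicate step is the diffusion part --- showing that two diffusions sharing a complete eigenbasis must have proportional $(\mu,\sigma^2)$, equivalently ruling out $\lambda_1^{(1)}\lambda_2^{(2)}\neq\lambda_2^{(1)}\lambda_1^{(2)}$. This is exactly where Assumption \ref{assump:diff}(2)--(3) is used: discreteness and simplicity of the spectrum supply the Sturm--Liouville oscillation theorem (the $n$-th eigenfunction has exactly $n$ interior zeros, and zeros of consecutive eigenfunctions interlace), while $C^2$-regularity supplies unique continuation; both are needed to convert the relation $\varphi_3=C\varphi_1^\alpha\varphi_2^\beta$ into a contradiction and to promote the dense-set identities to identities on all of $I$.
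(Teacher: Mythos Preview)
Your converse and subordinator steps are correct and essentially match the paper's: the converse via the bilinear expansion \eqref{eq:SubDiffTPD}, and the subordinator part via $\phi_i(\lambda)=\gamma_i\lambda+\lambda\hat\omega_i(\lambda)$ together with $\hat\omega_i(\lambda)\to0$ as $\lambda\to\infty$ (the paper phrases this as $\lim_{\lambda\to\infty}\phi_i(\lambda)/\lambda=\gamma_i$). Your reduction step is also fine, though the paper derives $q_1=q_2$ from the ergodic theorem applied to indicator functions rather than from pointwise convergence of $p^\phi_i$.

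The diffusion part is where you diverge from the paper, and where there is a genuine gap. The paper's argument is much shorter and uses a \emph{single} eigenfunction $\varphi$: multiplying $\mathcal{G}_i\varphi=\lambda^{(i)}\varphi$ by the common stationary density $q$ and using $\mu_iq=\tfrac{1}{2}(\sigma_i^2q)'$ turns the equation into $\tfrac{1}{2}(\sigma_i^2q\varphi')'=\lambda^{(i)}\varphi q$; integrating gives $\sigma_i^2(y)q(y)\varphi'(y)=2\lambda^{(i)}\int_l^y\varphi q$, whence $\sigma_1^2/\sigma_2^2=\lambda^{(1)}/\lambda^{(2)}$ at every $y$ where the right side is nonzero (all but finitely many, since $\varphi$ has finitely many zeros), and continuity does the rest. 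Because this integration relies on the reflecting boundary condition $\varphi'(l)=0$, the paper first treats the compact reflecting case and then, for general $I$, localises: it shows the \emph{restricted} generators on any $[a,b]\subset I$ also share eigenfunctions (via a commutativity-of-generators argument on compactly supported extensions) and applies the compact case on each $[a,b]$.

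Your route via three eigenfunctions is more elaborate and, as written, incomplete. The step ``plugging this back into the eigenfunction equations for $\varphi_1,\varphi_2,\varphi_3$ forces a pointwise identity incompatible with the oscillation properties of the $\varphi_n$'' is not an argument: you have only derived $\varphi_3=C\varphi_1^\alpha\varphi_2^\beta$ \emph{locally} on connected components of $\{W\neq0\}\setminus\{\varphi_1\varphi_2\varphi_3=0\}$, with a priori component-dependent $C$, and you have not said what the contradictory identity is or why oscillation rules it out. (It can be salvaged --- combining $\varphi_3'/\varphi_3=\alpha\varphi_1'/\varphi_1+\beta\varphi_2'/\varphi_2$ with the second-derivative component $\varphi_3''/\varphi_3=\alpha\varphi_1''/\varphi_1+\beta\varphi_2''/\varphi_2$ forces a quadratic relation in $(\log\varphi_1)'$ and $(\log\varphi_2)'$ that pins $(\alpha,\beta)\in\{(0,0),(1,0),(0,1)\}$, each of which contradicts either the nonzero determinant or linear independence --- but this still needs the auxiliary fact that $(\log\varphi_1)'/(\log\varphi_2)'$ is not locally constant, and none of this is in your write-up.) The upside of your pointwise-ODE approach is that, once completed, it would avoid the paper's localisation step for non-compact $I$; the downside is that as it stands the crucial contradiction is only asserted, not proved.
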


For a given subordinator, using the explicit form of its L\'{e}vy measure, we can simplify the condition $\hat{\omega}_1(-\lambda_{n})=c\hat{\omega}_2(-c\lambda_{n})$ to obtain explicit conditions on the parameters of the subordinator. Below we consider the important class of tempered stable subordinators.
\begin{corollary}\label{cor:TSS}
Suppose the L\'{e}vy measure $\nu(d\tau)$ belongs to the tempered stable family (see \eqref{eq:TSLevyMeasure}). \eqref{eq:idenPPSConditions} is equivalent to $\theta_1(x)=\frac{1}{c}\theta_2(x)$, $\gamma_1=c\gamma_2$, $p_1=p_2$, $\eta_1=\frac{1}{c}\eta_2$ and $C_1=c^{p_1}C_2$.
\end{corollary}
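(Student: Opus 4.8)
The plan is to reduce \eqref{eq:idenPPSConditions} to its only nontrivial ingredient, the condition $\hat\omega_1(-\lambda_n)=c\,\hat\omega_2(-c\lambda_n)$ for all $n$, since $\theta_1(x)=\theta_2(x)/c$ and $\gamma_1=c\gamma_2$ appear verbatim in both the hypothesis and the conclusion. The first step is to compute $\hat\omega$ in closed form for a tempered stable subordinator. By Tonelli's theorem applied to the nonnegative integrand one has $\lambda\hat\omega(\lambda)=\int_{(0,\infty)}(1-e^{-\lambda s})\,\nu(ds)$; substituting $\nu(ds)=Cs^{-p-1}e^{-\eta s}ds$ from \eqref{eq:TSLevyMeasure} and evaluating the Frullani-type integral $\int_0^\infty s^{-p-1}(e^{-\eta s}-e^{-(\eta+\lambda)s})\,ds$ (write $e^{-\eta s}-e^{-(\eta+\lambda)s}=\int_\eta^{\eta+\lambda}se^{-ts}\,dt$, interchange integrals, use $\int_0^\infty s^{-p}e^{-ts}\,ds=\Gamma(1-p)t^{p-1}$ together with $\Gamma(1-p)=-p\,\Gamma(-p)$) gives, for $\lambda>0$,
\[
\hat\omega(\lambda)=-\frac{C\,\Gamma(-p)}{\lambda}\big[(\lambda+\eta)^p-\eta^p\big],
\]
which is finite despite $\nu$ having infinite total mass, and positive because $\Gamma(-p)<0$ for $0<p<1$.

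Granting this formula, the implication ``$\Leftarrow$'' is a direct substitution: if $p_1=p_2=:p$, $\eta_2=c\eta_1$ and $C_2=c^{-p}C_1$, the closed form yields $c\,\hat\omega_2(c\mu)=\hat\omega_1(\mu)$ for every $\mu>0$, in particular at $\mu=-\lambda_n$, while the remaining two relations hold by hypothesis, so \eqref{eq:idenPPSConditions} follows. For ``$\Rightarrow$'', set $\mu_n:=-\lambda_n$; under Assumption \ref{assump:diff} the spectrum of $\mathcal{G}$ is an infinite strictly decreasing sequence of eigenvalues, so $\mu_n\to\infty$. Multiplying $\hat\omega_1(\mu_n)=c\,\hat\omega_2(c\mu_n)$ by $\mu_n$ and inserting the closed form turns the condition into $A_1\big[(\mu_n+\eta_1)^{p_1}-\eta_1^{p_1}\big]=A_2\big[(c\mu_n+\eta_2)^{p_2}-\eta_2^{p_2}\big]$ for every $n$, where $A_i:=-C_i\Gamma(-p_i)>0$. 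As $\mu_n\to\infty$ the two sides are asymptotic to $A_1\mu_n^{p_1}$ and $A_2c^{p_2}\mu_n^{p_2}$; comparing growth rates (e.g.\ take logarithms and divide by $\log\mu_n$) forces $p_1=p_2=:p$, and then matching leading coefficients forces $A_1=A_2c^{p}$, i.e.\ $C_1=c^pC_2$ after cancelling $-\Gamma(-p)\ne 0$. Substituting these exact identities back leaves $(c\mu_n+c\eta_1)^p-(c\eta_1)^p=(c\mu_n+\eta_2)^p-\eta_2^p$ for every $n$; since $a\mapsto(x+a)^p-a^p$ is strictly decreasing on $(0,\infty)$ for each fixed $x>0$ when $0<p<1$, this forces $c\eta_1=\eta_2$, i.e.\ $\eta_1=\eta_2/c$. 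Together with the relations carried over from \eqref{eq:idenPPSConditions}, this is exactly the asserted list.

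The main obstacle is this asymptotic-matching step: upgrading the countable family of exact equalities indexed by the eigenvalues to the three exact parameter identities. It hinges on the eigenvalue sequence being unbounded---which is precisely where the purely discrete spectrum in Assumption \ref{assump:diff} enters---and on careful leading-order bookkeeping; once $p_1=p_2$ and $C_1=c^pC_2$ are secured, pinning down $\eta$ by monotonicity is immediate. A minor point needing care is the Frullani evaluation and the use of Tonelli (rather than a naive integration by parts, which is obstructed by the infinite mass of $\nu$) to see that $\hat\omega(\lambda)$ is finite for $\lambda>0$.
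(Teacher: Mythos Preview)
Your proof is correct and follows essentially the same approach as the paper: compute $\hat\omega$ in closed form for the tempered stable family, then extract the parameter identities from the equalities at $-\lambda_n$. The paper's own proof is only a sketch---after writing down the closed form it simply asserts that the equivalence ``is not difficult to show''---so your asymptotic-and-monotonicity argument (forcing $p_1=p_2$ and then $C_1=c^pC_2$ from the growth rate as $\mu_n\to\infty$, then pinning down $\eta$ via the strict monotonicity of $a\mapsto(x+a)^p-a^p$) in fact supplies the details the paper omits.
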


The result in Example \ref{eg:IG-SubOU} for the IG-SubOU process becomes a special case of Corollary \ref{cor:TSS} with $p_1=p_2=\frac{1}{2}$. Theorem \ref{thm:idenPPS} implies that, to estimate the parameters of a subordinate diffusion, a scale needs to be fixed first. In Example \ref{eg:IG-SubOU}, we can, for example, fix $\sigma=1$ and estimate the remaining parameters of the IG-SubOU process.

\section{A Two-Step Estimation Procedure for Subordinate Diffusions}\label{sec:Estimation}
Let $\theta_1$ be a $p_1\times 1$ vector for the parameters of the diffusion $X$ and $\theta_2$ be a $p_2\times 1$ vector for the parameters of the subordinator $T$. Put $\theta=(\theta_1',\theta_2')'$, which is a $(p_1+p_2)\times 1$ vector for the parameters of $Y$. The data of $Y$ is given by $\{y_{t_i}:i=0,1,\cdots,n\}$ with $t_i=i\Delta$. In our estimation procedure, we use two estimating functions
%two types of moment conditions
%\begin{equation}\label{eq:MomentCond}
%E_{\theta}[f_1(Y_{t_0},Y_{t_1};\theta_1)]=0,\ E_{\theta}[f_2(Y_{t_0},Y_{t_1};\theta_1,\theta_2)]=0,
%\end{equation}
\begin{equation} \label{eq: F1}
F_{n,1}(\theta_1)=\sum_{i=1}^{n}f_1(y_{t_{i-1}},y_{t_i};\theta_1),
\end{equation}
where $f_1$ is a $p_1\times 1$ vector function, and
\begin{equation} \label{eq: F2}
F_{n,2}(\theta_1,\theta_2)=\sum_{i=1}^{n}f_2(y_{t_{i-1}},y_{t_i};\theta_1,\theta_2),
\end{equation}
where $f_2$ is a $p_2\times 1$ vector function. We choose $f_1$ based on moment conditions developed in \cite{conley1997short} and $F_{n,2}$ is a martingale estimating function based on eigenfunctions. The estimation consists of two steps. In the first step, $\hat{\theta}_{n,1}$, the estimator of $\theta_1$, is obtained by solving $F_{n,1}(\theta_1)=0$. Then, in the second step, we find $\hat{\theta}_{n,2}$, the estimator of $\theta_2$, by solving $F_{n,2}(\hat{\theta}_{n,1},\theta_2)=0$. To simplify the notation, we will also write $F_{n,2}(\theta_1,\theta_2)$ as $F_{n,2}(\theta)$ below.

\subsection{The Choice of Moment Conditions} \label{momentSec}
\cite{conley1997short} estimates the parameters of a diffusion under random sampling. They assume that the random sampling process is increasing and independent of the underlying diffusion and has stationary increments. Under this assumption, two types of moment conditions are proposed. A deterministic sample of the subordinate diffusion $Y$ can be viewed as a random sample of the diffusion $X$. Furthermore, in our set-up, the random sampling process $T$ clearly satisfies the assumption in \cite{conley1997short}. Hence we can adopt the following two types of moment conditions proposed there ($\gQ$ is the stationary distribution of $Y$ with density $q$ and $E_{\gQ}$ denotes taking expectation with initial distribution equal to $\gQ$)
\begin{equation} \label{eq: diffCon1}
E_{\gQ}[\mu(Y_t)g'(Y_t)+\frac{1}{2} \sigma^2(Y_t)g''(Y_t)]=0 \quad \text{for any}\ g\in \gD(\gG),
\end{equation}
and
\begin{equation} \label{eq: diffCon2}
E_{\gQ}[\gA \Psi (Y_{t+\Delta},Y_t)-\gA' \Psi(Y_{t+\Delta},Y_t)]=0,
\end{equation}
where
\begin{equation}
\gA \Psi(x,y)=\mu(x) \partial_x\Psi(x,y)+\frac{1}{2}\sigma^2(x) \partial_x^2 \Psi(x,y),\ \gA' \Psi(x,y)=\mu(y) \partial_y\Psi(x,y)+\frac{1}{2}\sigma^2(y) \partial_y^2 \Psi(x,y).
\end{equation}
The function $\Psi (x,y)$ satisfies that (1) for each $x\in (l,r)$, $\Psi(x,\cdot)\in D$ is bounded and continuous and for each $y\in (l,r)$, $\Psi(\cdot,y)\in D$ is bounded and continuous; (2) $\gA \Psi (\cdot,y)$ is bounded and continuous for all $y \in (l,r)$ and $\gA' \Psi (x,\cdot)$ is bounded and continuous for all $x \in (l,r)$. An efficient choice of test function $g$ in \eqref{eq: diffCon1} is given by (see \cite{hansen1995back}, \cite{conley1997short}, \cite{kessler2000simple})
\begin{equation}\label{eq:score-vec}
g(y;{\theta _1}) = \frac{\partial }{{\partial {\theta _1}}}\log q(y;{\theta _1}).
\end{equation}
To construct the vector function $f_1$ in the estimating function \eqref{eq: F1}, we select its components from the moment conditions \eqref{eq: diffCon1} and \eqref{eq: diffCon2}.

We use $(\lambda_m,\varphi_m(x))$ ($1\le m\le M$) to construct $f_2$. Recall that for each $m$, $\gP_\Delta^\phi\varphi_m(x)=e^{-\phi(-\lambda_m)\Delta}\varphi_m(x)$,
and using the tower law, $E[\varphi_m(Y_{t+\Delta})-e^{-\phi(-\lambda_m)\Delta}\varphi_m(Y_t)]=0$, which holds for any initial distribution for process $Y$. We can combine these moment conditions together.
Let $c_m$ be a $p_2\times 1$ vector function ($1\le m\le M$), then
\begin{equation}
E\left[\sum_{m= 1}^M {c_m}({Y_t})\left(\varphi_m(Y_{t + \Delta}) - e^{-\phi(-\lambda_m)t}{\varphi_m}(Y_t)\right)\right] = 0.
\end{equation}
Note that we do not use $(\lambda_0,\varphi_0(x))$ in the moment condition due to \eqref{eq:0-th-eigen}. Let $V(y_1,y_2;\theta)$ be a $M \times 1$ vector function with each element
\begin{equation}
V_m(y_1,y_2;\theta) = \varphi_m(y_2;\theta) - e^{-\phi(-\lambda_m;\theta)\Delta}\varphi_m(y_1;\theta),
\end{equation}
and $W(y_1;\theta)$ is a $M\times p_2$ matrix. We put $f_2(y_1,y_2;\theta)=W'(y_1;\theta)V(y_1,y_2;\theta)$ (recall that $W'$ is the transpose of $W$). Then,
\begin{equation} \label{eq: F2-1}
F_{n,2}(\theta)=\sum_{i=1}^{n}W'(y_{t_{i-1}};\theta)V(y_{t_{i-1}},y_{t_i};\theta).
\end{equation}
It is also easy to see that $F_{n,2}$ is a martingale. Such martingale estimating function based on eigenfunctions is first proposed by \cite{kessler1999estimating} to estimate a discretely sampled diffusion with analytical expressions for the eigenvalues and the eigenfunctions. The choice of the weighting matrix $W$ is crucial for this method's efficiency, which we discuss next.

\subsection{The Choice of the Weighting Matrix}
Let's first assume that the diffusion parameter $\theta_1$ is known. Then, to determine the optimal weighting matrix in \eqref{eq: F2-1} in the sense of \cite{godambe2010quasi}, we can follow \cite{kessler1999estimating}. Adapting Eq.(3.3) in \cite{kessler1999estimating} to our setting, we obtain a weighting matrix which solves the following linear system (we denote the solution by $W_{KS}$ and refer to it as the KS weight hereafter)
\begin{equation} \label{eq: solveW}
P(y;\theta)W_{KS}(y;\theta)=Q(y;\theta),
\end{equation}
where $P$ is a $M \times M$ matrix and $Q$ is a $M \times p_2$ matrix with
\begin{align}
{P_{i,j}}(y;\theta) &= \int_l^r {{\varphi _i}(x;\theta_1){\varphi _j}(x;\theta_1){p^\phi }(t,y,x;\theta )dx}\\
&- {e^{ - \phi ( - {\lambda _i};\theta )t}}{e^{ - \phi ( - {\lambda_j};\theta )t}}{\varphi _i}(y;\theta_1){\varphi _j}(y;\theta_1),\label{eq:P-mat}\\
{Q_{i,j}}(y;\theta) &= \frac{\partial }{{\partial {\theta_{2,j}}}}{e^{ - \phi ( - {\lambda _i};\theta )t}}{\varphi _i}(y;\theta_1).\label{eq:Q-mat}
\end{align}
Since the diffusion parameters also need to be estimated, $W_{KS}$ computed by \eqref{eq: solveW} is not the true optimal weighting matrix.

Now we derive the optimal weighting matrix. We will make precise the meaning of being ``optimal'' below. First, we define one vector and two matrices. Let
\begin{align}
&{F_n} = \left[ {\begin{array}{*{20}{c}}
	{{F_{n,1} }(\theta_1)}\\
	{{F_{n,2} }(\theta)}
	\end{array}} \right],\\
&S_n: = \left[ {\begin{array}{*{20}{c}}
	S_{1,1}&S_{1,2}\\
	S_{2,1}&S_{2,2}
	\end{array}} \right]=E[{F_n}{F'_n}] = \left[ {\begin{array}{*{20}{c}}
	{E[{F_{n,1} }(\theta_1)F{'_{n,1} }(\theta_1)]}&{E[{F_{n,1} }(\theta_1)F{'_{n,2} }(\theta)]}\\
	{E[{F_{n,2} }(\theta)F{'_{n,1} }(\theta_1)]}&{E[{F_{n,2} }(\theta)F{'_{n,2} }(\theta)]}
	\end{array}} \right],\\
&D_n:= \left[ {\begin{array}{*{20}{c}}
	D_{1,1}&0\\
	D_{2,1}&D_{2,2}
	\end{array}} \right]=E[{{\dot F}_n}]  = \left[ {\begin{array}{*{20}{c}}
	{E[\partial _{\theta_1}{F_{n,1} }(\theta_1)]}&0\\
	{E[\partial_{\theta_1} {F_ {n,2}}(\theta)]}&{E[\partial_{\theta_2} {F_{n,2} (\theta)}]}
	\end{array}} \right].
\end{align}
To simplify the notation, we suppress the dependence on $\theta$ and $n$ in $S_{i,j}$ and $D_{i,j}$. Assuming that $D_{1,1}$ and $D_{2,2}$ are invertible, $D_n^{-1}$ can be represented as
\begin{equation}
\left[ {\begin{array}{*{20}{c}}
	{{D_{1,1}^{ - 1}}}&0\\
	{ - {D_{2,2}^{ - 1}}D_{2,1}{D_{1,1}^{ - 1}}}&{{D_{2,2}^{ - 1}}}
	\end{array}} \right] .
\end{equation}
In Section \ref{sec:CAN}, under certain regularity conditions, we will prove that
\begin{equation}
\sqrt{n}(\hat{\theta}_n-\bar{\theta}) \to \mathcal{N}(0,\Sigma),
\end{equation}
where $\hat{\theta}_n$ is the estimator for $\theta$ and $\bar{\theta}$ is its true value, and
\begin{align}
\Sigma&=\lim_{n\to\infty} nD_n^{-1}S_n (D_n')^{-1} \\
&=\lim_{n\to\infty} n\left[ {\begin{array}{*{20}{c}}
	{D_{1,1}^{ - 1}}&0\\
	{ - D_{2,2}^{ - 1}{D_{2,1}}D_{1,1}^{ - 1}}&{D_{2,2}^{ - 1}}
	\end{array}} \right]{S_n}\left[ {\begin{array}{*{20}{c}}
	(D_{1,1}')^{ - 1}&- ({D_{1,1}'})^{ - 1}{D_{2,1}'}(D_{2,2}')^{ - 1}\\
	0&(D_{2,2}')^{ - 1}
	\end{array}} \right],
\end{align}
with $\theta=\bar{\theta}$ in evaluating all the matrices involved.
From the asymptotic normality result, for fixed large sample size $n$, we can approximate the covariance matrix of $\hat{\theta}_n$ by $D_n^{-1}S_n (D_n')^{-1}$. Note that the estimating function $F_{n,1}$ for the diffusion parameter is fixed, so the upper left block matrix in $D_n^{-1}S_n (D_n')^{-1}$, which can be seen as the approximate covariance matrix for $\hat{\theta}_{n,1}$, is fixed. Our aim is to find the weighting matrix $W$ that minimizes the lower right block matrix in $D_n^{-1}S_n (D_n')^{-1}$, the approximate covariance matrix for $\hat{\theta}_{n,2}$. The precise definition is given below. $F^*_{n,2}$ is the estimating function constructed using the weighting matrix $W^*$ in \eqref{eq: F2}.
\begin{definition} \label{defOpt}
Let $\mathcal{W}$ be the collection of all possible weighting matrix. Define $D_{2,1}^* = E[{\partial _{{\theta _1}}}F_{n,2}^*(\theta )]$, $D_{2,2}^* = E[{\partial _{{\theta _2}}}F_{n,2}^*(\theta )]$, $S_{1,2}^*=E[F_{n,1}(\theta_1)(F_{n,2}^*(\theta) )' ]$ and $S_{2,2}^*=E[F_{n,2}^*(\theta) (F_{n,2}^*(\theta))' ]$. $W^*$ is optimal within $\mathcal{W}$ if the lower right sub-matrix of $D_n^{-1}S_n (D_n')^{-1}$ is minimized,
	that is,
\begin{align}
& D_{2,2}^{ - 1}\left[ {\begin{array}{*{20}{c}}
	{ - {D_{2,1}}D_{1,1}^{ - 1}}&I
	\end{array}} \right]\left[ {\begin{array}{*{20}{c}}
	{{S_{1,1}}}&{{S_{1,2}}}\\
	{{S_{2,1}}}&{{S_{2,2}}}
	\end{array}} \right]\left[ {\begin{array}{*{20}{c}}
	{ - {{({D_{1,1}'})}^{ - 1}}{D_{2,1}'}}\\
	I
	\end{array}} \right]{({D_{2,2}'})^{ - 1}} \notag \\
&  - {(D_{2,2}^*)^{ - 1}}\left[ {\begin{array}{*{20}{c}}
	{ - D_{2,1}^*D_{1,1}^{ - 1}}&I
	\end{array}} \right]\left[ {\begin{array}{*{20}{c}}
	{{S_{1,1}}}&{S_{1,2}^*}\\
	{S_{2,1}^*}&{S_{2,2}^*}
	\end{array}} \right]\left[ {\begin{array}{*{20}{c}}
	{ - {{({D_{1,1}'})}^{ - 1}}(D_{2,1}^*)'}\\
	I
	\end{array}} \right]{({D_{2,2}^*}')^{ - 1}} \notag
\end{align}
is positive semi-definite for all $W\in \mathcal{W}$. Here all quantities without * correspond to using weighting matrix $W$. We assume that $D_{2,2}$ is invertible for all $W\in \mathcal{W}$.
\end{definition}

\begin{remark}
Our definition of the optimal weighting matrix is only concerned with the covariance matrix of $\theta_2$. The true definition of optimality would be to look at the full covariance matrix. We have tried to derive the true optimal weighting matrix, but it is very difficult to obtain an expression for it. This is why we adopted the modified definition of optimality given in Definition \ref{defOpt}. Such modification will not cause significant loss of statistical efficiency provided that the estimator $\hat{\theta}_{n,1}$ is close to $\bar{\theta}_1$, the true value of $\theta_1$. This is because the optimal weighting matrix derived under the modified definition would be asymptotically close to the true one.

While we can also apply weighting to all the moment conditions, including those for estimating $\theta_1$, the derivation of the optimal weighting matrix would be even harder, and it will certainly require more computations to calculate it. For these reasons, we only consider weighting the eigenfunction-based estimating functions. The numerical examples in Section \ref{sec:num} show that our approach delivers good results.
\end{remark}

We next provide an equivalent characterization of optimality.
\begin{proposition}\label{prop:optimality}
$W^*$ is optimal if and only if
	\begin{equation}
D_{2,2}^{ - 1}\left[ {\begin{array}{*{20}{c}}
	{ - {D_{2,1}}D_{1,1}^{ - 1}}&I
	\end{array}} \right]\left[ {\begin{array}{*{20}{c}}
	{{S_{1,1}}}&{S_{1,2}^*}\\
	{{S_{2,1}}}&{\tilde{S}'_{2,2} }
	\end{array}} \right]\left[ {\begin{array}{*{20}{c}}
	{ - {{(D_{1,1}')}^{ - 1}}(D_{2,1}^*)'}\\
	I
	\end{array}} \right]
	\end{equation}
	is a constant matrix for any $W\in \mathcal{W}$ where $\tilde{S}_{2,2}=E[F^*_2(\theta)F'_2(\theta)].$
\end{proposition}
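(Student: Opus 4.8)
\emph{Proof proposal.} The plan is to re-express both sides of the claimed equivalence through a single auxiliary object --- the ``corrected'' estimating function for $\theta_2$ --- and then recognize the proposition as a standard orthogonality characterization of the minimizer of a matrix quadratic form over an affine class of estimating functions.

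First I would introduce, with everything evaluated at the true value $\bar\theta$, the corrected estimating function $\widetilde F_n(W):=\begin{bmatrix}-D_{2,1}D_{1,1}^{-1}&I\end{bmatrix}F_n=F_{n,2}(\theta)-D_{2,1}D_{1,1}^{-1}F_{n,1}(\theta_1)$ and its normalization $G_W:=-D_{2,2}^{-1}\widetilde F_n(W)$ (this is the estimating function whose zero governs $\hat\theta_{n,2}$ after accounting for the randomness of $\hat\theta_{n,1}$). Two block-matrix computations are then needed: (i) using $S_n=E[F_nF_n']$ and the stated form of $D_n^{-1}$, the lower-right block of $D_n^{-1}S_n(D_n')^{-1}$ --- call it $A(W)$ --- equals $E[G_WG_W']$; and (ii) the matrix $M(W)$ in the proposition equals $E[G_WG_{W^*}'](D_{2,2}^*)'$, because the inner block matrix $\begin{bmatrix}S_{1,1}&S_{1,2}^*\\S_{2,1}&\tilde S_{2,2}'\end{bmatrix}$ is precisely $E[F_n(F_n^*)']$ (with the $*$-entries built from $W^*$) while the bordering row and column are $\begin{bmatrix}-D_{2,1}D_{1,1}^{-1}&I\end{bmatrix}$ and $\begin{bmatrix}-D_{2,1}^*D_{1,1}^{-1}&I\end{bmatrix}'$. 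Putting $W=W^*$ in (ii) gives $M(W^*)=A(W^*)(D_{2,2}^*)'$, so ``$M(W)$ is a constant matrix in $W$'' is equivalent to the orthogonality relation $E[(G_W-G_{W^*})G_{W^*}']=0$ for all $W\in\mathcal W$.

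Next I would record the structure of the class. The map $W\mapsto\widetilde F_n(W)$ is linear, and for any constant $p_2\times p_2$ matrix $B$ one has $\widetilde F_n(WB')=B\widetilde F_n(W)$ and $D_{2,2}(WB')=BD_{2,2}(W)$; hence each $G_W$ equals $\widetilde F_n(\widehat W)$ for $\widehat W:=-W(D_{2,2}(W)^{-1})'\in\mathcal W$ with $D_{2,2}(\widehat W)=-I$, so $\mathcal G:=\{G_W:W\in\mathcal W\}$ is the affine slice $\{\widetilde F_n(W):W\in\mathcal W,\ D_{2,2}(W)=-I\}$ of the linear space $\{\widetilde F_n(W)\}$, with tangent space $\mathcal T:=\{\widetilde F_n(W):W\in\mathcal W,\ D_{2,2}(W)=0\}$, itself a linear space closed under left multiplication by constant matrices, and $G_W-G_{W^*}\in\mathcal T$ for every $W$. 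The direction ``$M(W)$ constant $\Rightarrow$ $W^*$ optimal'' is then immediate: assuming the orthogonality, $A(W)-A(W^*)=E[G_WG_W']-E[G_{W^*}G_{W^*}']=E[(G_W-G_{W^*})(G_W-G_{W^*})']\succeq0$, the two cross-terms being $E[(G_W-G_{W^*})G_{W^*}']=0$ and its transpose, which is exactly Definition~\ref{defOpt}. For the converse, fix $Z\in\mathcal T$ and a constant matrix $B$; the curve $G_{W^*}+tBZ$ lies in $\mathcal G$ for all $t$ (it is $\widetilde F_n(\widehat{W^*}+tW_Z B')$ where $\widetilde F_n(W_Z)=Z$, still with $D_{2,2}\equiv-I$), so optimality forces $t=0$ to be a Loewner minimum of its covariance, hence (differentiating $v'(\cdot)v$ at $0$ for every $v$) the linear term vanishes: $NB'+BN'=0$ with $N:=E[G_{W^*}Z']$. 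Letting $B$ run over all matrix units forces $N=0$, i.e. $E[G_{W^*}Z']=0$ for all $Z\in\mathcal T$; taking $Z=G_W-G_{W^*}$ yields the orthogonality.

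The hard part will be exactly the last step of the converse. Minimizing a matrix-valued quadratic form in the Loewner order along a single one-parameter perturbation only yields that $E[G_{W^*}Z']$ is \emph{anti-symmetric}, not that it vanishes --- the anti-symmetric part is genuinely invisible to the scalar minimality conditions --- so the argument must exploit that $\mathcal T$ is closed under left multiplication by \emph{arbitrary} constant matrices to remove this ambiguity. I would therefore be careful to (a) verify this closure for the present class, where $\widetilde F_n$ carries the correction term $-D_{2,1}D_{1,1}^{-1}F_{n,1}$ and not just the raw $F_{n,2}$, and (b) confirm that the perturbed weight genuinely stays in $\mathcal W$ (keeps $D_{2,2}$ invertible) --- which is why it is cleaner to parametrize $\mathcal G$ by the normalized weights ($D_{2,2}\equiv-I$) rather than by $W$ itself. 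Everything else reduces to the two block-matrix identities of the first step, which are routine but require careful tracking of which blocks carry the $*$.
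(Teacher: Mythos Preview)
Your proposal is correct and follows essentially the same route as the paper's proof. Both arguments (i) rewrite the lower-right block of $D_n^{-1}S_n(D_n')^{-1}$ as $E[GG']$ for the corrected estimating function $G=D_{2,2}^{-1}\bigl[-D_{2,1}D_{1,1}^{-1}\ \ I\bigr]F_n$, (ii) obtain sufficiency from the square expansion $E[GG']-E[G^*G^{*\prime}]=E[(G-G^*)(G-G^*)^\prime]$ once the cross term is constant, and (iii) for necessity perturb along the class, get an anti-symmetry relation from the first-order Loewner condition, and then kill the anti-symmetric part by exploiting closure of the class under constant matrix multiplication. The only cosmetic differences are that the paper perturbs concretely via $\tilde F_{n,2}=F_{n,2}^*+\alpha F_{n,2}$ and then replaces $F_{n,2}$ by $EF_{n,2}$ with $E$ diagonal to force $Z=0$ from $Z'E+EZ=0$, whereas you build the matrix $B$ into the perturbation from the outset and parametrize $\mathcal G$ as an affine slice with tangent space $\mathcal T$; your version is a cleaner packaging of the same idea, and your cautionary remarks about (a) closure of $\mathcal T$ under left multiplication in the presence of the correction term and (b) staying inside $\mathcal W$ are exactly the points one must check.
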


In the following, $E$ refers to taking expectation with the stationary distribution $\gQ$ as the initial distribution (for notational simplicity, we dropped $\gQ$ in the subscript). Using \eqref{eq: F1} and \eqref{eq: F2-1}, it is straightforward to obtain that
\[\left[ {\begin{array}{*{20}{c}}
	D_{1,1}&0\\
	D_{2,1}&D_{2,2}
	\end{array}} \right] = \left[ {\begin{array}{*{20}{c}}
	{nE[\partial_{\theta_1} f_1(Y_{t_0},Y_{t_1};\theta_1)]}&0\\
	{nE[W(Y_{t_0};\theta)'\partial_{\theta_1} V(Y_{t_0},Y_{t_1};\theta)]}&{nE[W(Y_{t_0};\theta)'\partial_{\theta_2} V(Y_{t_0},Y_{t_1};\theta)]}
	\end{array}} \right]\]
and
\[ S_{2,2} = nE\left[ {W(Y_{t_0};\theta)'V(Y_{t_0},Y_{t_1};\theta)V(Y_{t_0},Y_{t_1};\theta)'W(Y_{t_0};\theta)} \right].\]
Now we simplify $E[F_{n,1}(\theta_1)F{'_{n,2}}(\theta)]$. $\gF_t$ refers to the information generated by the process $Y$ up to time $t$. First, note that for $j<i$,
\begin{align}
&E[f_1({Y_{t_{j-1}}},Y_{t_j})V'({Y_{t_{i-1}}},{Y_{t_i}})W({Y_{t_{i-1}}})]= E[E[f_1({Y_{t_{j-1}}},Y_{t_j})V'({Y_{t_{i-1}}},{Y_{t_i}})W({Y_{t_{i-1}}})|{\gF_{t_{i-1}}}]]  \\
&=E[f_1({Y_{t_{j-1}}},Y_{t_j})E[V'({Y_{t_{i-1}}},{Y_{t_i}})|{\gF_{t_{i-1}}}]W(Y_{t_{i-1}})]=0.
\end{align}
Then,
\begin{align}
E[{F_{n,1}}F{'_{n,2}}]  &=E\left[\sum\limits_{j = 1}^n {\sum\limits_{i = 1}^n {f_1({Y_{t_{j-1}}},Y_{t_j})V'({Y_{t_{i-1}}},{Y_{t_i}})} } W({Y_{t_{i-1}}})\right] \notag \\
&= E\left[\sum\limits_{i = 1}^n {\sum\limits_{j \ge i}^n {f_1(Y_{t_{j-1}},{Y_{t_j}})V'({Y_{t_{i-1}}},{Y_{t_i}})} } W({Y_{t_{i-1}}})\right] \notag \\
& = \sum\limits_{i=1}^n {E\left[\sum\limits_{j \ge i}^n {f_1(Y_{t_{j-1}},{Y_{t_j}})V'({Y_{t_{i-1}}},{Y_{t_i}})W({Y_{t_{i-1}}})} \right]} \notag \\
&= \sum\limits_{i = 1}^n{E\left[\sum\limits_{j = 1}^{n - i + 1} {f_1(Y_{t_{j-1}},{Y_{t_j}})V'({Y_{t_0} },{Y_{t_1}})W({Y_{t_0}})} \right]} \notag \\
& = \sum\limits_{j = 1}^n {\sum\limits_{i = 1}^{n - j + 1} {E\left[ {f_1({Y_{{t_{j - 1}}}},{Y_{{t_j}}})V'({Y_{{t_0}}},{Y_{{t_1}}})W({Y_{{t_0}}})} \right]} } \notag \\
& = E\left[\left\{ {\sum\limits_{j = 1}^n {(n- j + 1)f_1\left( Y_{t_{j-1}},{{Y_{t_j}}} \right)} } \right\}V'({Y_{t_0}},{Y_{t_1}})W({Y_{t_0}})\right].
\end{align}
Define
\begin{equation}\label{eq:f1-tilde}
\tilde{f}_1\left(Y_{t_0}, Y_{t_1}\right): =  E\left[ \sum\limits_{j = 1}^n {(n - j + 1)f_1\left( Y_{t_{j-1}},{{Y_{t_j}}} \right)} \Bigg|Y_{t_0},Y_{t_1}\right].
\end{equation}
Then $E[F_{n,1}F{'_{n,2}}]=E[\tilde{f}_1(Y_{t_0},Y_{t_1})V'(Y_{t_0},Y_{t_1})W(Y_{t_0})]$. The optimality condition for our problem is that
\begin{align} \label {eq: optCon}
&D_{2,2}^{ - 1}\left[ {\begin{array}{*{20}{c}}
	{ - {D_{2,1}}D_{1,1}^{ - 1}}&I
	\end{array}} \right]\left[ {\begin{array}{*{20}{c}}
	{{S_{1,1}}}&{S_{1,2}^*}\\
	{{S_{2,1}}}&{\tilde{S}_{2,2} }
	\end{array}} \right]\left[ {\begin{array}{*{20}{c}}
	{ - {{(D_{1,1}')}^{ - 1}}(D_{2,1}^*)'}\\
	I
	\end{array}} \right] \notag \\
&=\frac{1}{n}E{\left[ {W'{\partial _{{\theta _2}}}V} \right]^{ - 1}} \{ nE\left[ {W'{\partial _{{\theta _1}}}V} \right]\left(D_{1,1}^{-1}S_{1,1}(D_{1,1}')^{-1}(D_{2,1}^*)'-D_{1,1}^{-1}S_{1,2}^* \right) \notag \\
&  - E[W'V\tilde{f}'](D_{1,1}')^{-1}(D_{2,1}^*)' +nE\left[W'VV'W^*\right] \}
\end{align}
is a constant matrix.
\eqref{eq: optCon} can be rewritten as
$E\left[W'H_1\right]^{-1}E\left[W'H_2\right]$,
where
\begin{align}
H_1(y)&=E\left[{\partial_{\theta_2} V}(Y_{t_0},Y_{t_1};\theta)|Y_{t_0}=y \right], \label{eq:H1}\\
H_2(y)&=E\left[{\partial_{\theta_1} V}(Y_{{t_0}},Y_{t_1};\theta)|Y_{t_0}=y \right](D_{1,1}^{-1}S_{1,1}(D_{1,1}')^{-1}(D_{2,1}^*)'-D_{1,1}^{-1}S_{1,2}^*) \\
&-\frac{1}{n}E\left[V(Y_{{t_0}},Y_{t_1};\theta)\tilde{f}_1'(Y_{t_0},Y_{t_1};\theta_1) |Y_{t_0}=y\right](D_{1,1}')^{-1}(D_{2,1}^*)' \\
&+ E\left[V(Y_{{t_0}},Y_{t_1};\theta)V'(Y_{{t_0}},Y_{t_1};\theta)|Y_{t_0}=y\right]W^*(y).\label{eq:H2}
\end{align}
Since \eqref{eq: optCon} is a constant for all $W$, we have
$E[W'H_2]=E[W'H_1]C$
where $C$ is a constant matrix. Since $W(y)$ is arbitrary, we can set $W(y)=O1_{(l,y_0)}(y)$ where $O$ is a constant matrix with each entry equal to 1 and $y_0$ is an arbitrary constant in $(l,r)$. Then, we have
\[\int_{l}^{y_0}OH_2(y)q(y)dy=\int_{l}^{y_0}OH_1(y)q(y)dyC. \]
Differentiating with respect to $y_0$ on both sides of the above equation, we get $H_2(y)=H_1(y)C$ for any $y\in (l,r)$. Thus, from \eqref{eq:H2},
\begin{align}
W^*(y)&=E\left[V(Y_{{t_0}},Y_{t_1})V'(Y_{{t_0}},Y_{t_1})|Y_{t_0}=y\right]^{-1} \notag \\
& \Big\{E\left[{\partial_{\theta_1} V}(Y_{t_0},Y_{t_1})|Y_{t_0}=y \right](D_{1,1}^{-1}S_{1,2}^*-D_{1,1}^{-1}S_{1,1}(D_{1,1}')^{-1}(D_{2,1}^*)') \notag \\
&+\frac{1}{n}E\left[V(Y_{{t_0}},Y_{t_1})\tilde{f}_1'(Y_{t_0},Y_{t_1}) |Y_{t_0}=y\right](D_{1,1}')^{-1}(D_{2,1}^*)' +E\left[{\partial_{\theta_2} V}(Y_{t_0},Y_{t_1})|Y_{t_0}=y \right]C \Big\}.
\end{align}
Since $D_{2,1}^*$ and $S_{1,2}^*$ involve $W^*$, the above equation is not an explicit expression for $W^*$. However, since $D_{1,1}^{-1}S_{1,2}^*-D_{1,1}^{-1}S_{1,1}(D_{1,1}')^{-1}(D_{2,1}^*)'$ and $(D_{1,1}')^{-1}(D_{2,1}^*)'$ do not depend on $y$, $W^*$ is of the following form
\begin{align} \label{eq: optW}
{W^*}({y}) &= {\left( {E[V({Y_{t_0}},{Y_{t_1}};\theta)V'({Y_{t_0}},{Y_{t_1}};\theta)|{Y_{t_0}=y}]} \right)^{ - 1}}E[\partial_{\theta_2} V({Y_{t_0}},{Y_{t_1}};\theta)|{Y_{t_0}=y}]{C_1} \\
&+{\left( {E[V({Y_{t_0}},{Y_{t_1}};\theta)V'({Y_{t_0}},{Y_{t_1}};\theta)|{Y_{t_0}=y}]} \right)^{ - 1}}E[\partial_{\theta_1} V({Y_{t_0}},{Y_{t_1}};\theta)|{Y_{t_0}=y}]{C_2}  \\
&+{\left( {E[V({Y_{t_0}},{Y_{t_1}};\theta)V'({Y_{t_0}},{Y_{t_1}};\theta)|{Y_{t_0}=y}]} \right)^{ - 1}}E[V({Y_{t_0}},{Y_{t_1}};\theta)\tilde{f}_1'(Y_{t_{t_0}},Y_{ t_1};\theta_1)|{Y_{t_0}=y}]{C_3},
\end{align}
where $C_1,C_2$ and $C_3$ are constant matrices. Note that the optimal weighting matrix is not unique because $cW^*$ is still optimal for any $c\ne0$. Here, we will try to find the general form of an optimal weighting matrix.
\begin{proposition} \label{prop:optW}
	$W^*$ of form \eqref{eq: optW} is an optimal weight when $C_1=I$ and $C_2, C_3$ solve the following linear system.
	\begin{equation}
	\left[ {\begin{array}{*{20}{c}}
		{ - {Q_2}}&{{D_{1,1}}' - {Q_3}}\\
		{{D_{1,1}} - {Q_3}'}&{n{S_{1,1}} - {Q_5}}
		\end{array}} \right]\left[ {\begin{array}{*{20}{c}}
		{{C_2}}\\
		{{C_3}}
		\end{array}} \right] = \left[ {\begin{array}{*{20}{c}}
		{{Q_1}}\\
		{{Q_4}}
		\end{array}} \right]
	\end{equation}
	where
	\begin{align}
		&{{Q_1} = E\left[ {{{\left( {\partial_{\theta_1} V({Y_{t_0}},{Y_{t_1}})} \right)}^\prime }{{\left( {E[V({Y_{t_0}},{Y_{t_1}})V'({Y_{t_0}},{Y_{t_1}})|{Y_{t_0}}]} \right)}^{ - 1}}E[\left( {\partial_{\theta_2} V({Y_{t_0}},{Y_{t_1}})} \right)|{Y_{t_0}}]} \right],}\\
		&{{Q_2} = E\left[ {{{\left( {\partial_{\theta_1} V({Y_{t_0}},{Y_{t_1}})} \right)}^\prime }{{\left( {E[V({Y_{t_0}},{Y_{t_1}})V'({Y_{t_0}},{Y_{t_1}})|{Y_{t_0}}]} \right)}^{ - 1}}E[\left( {\partial_{\theta_1} V({Y_{t_0}},{Y_{t_1}})} \right)|{Y_{t_0}}]} \right],}\\
		&{{Q_3} = E\left[ {{{\left( {\partial_{\theta_1} V({Y_{t_0}},{Y_{t_1}})} \right)}^\prime }{{\left( {E[V({Y_{t_0}},{Y_{t_1}})V'({Y_{t_0}},{Y_{t_1}})|{Y_{t_0}}]} \right)}^{ - 1}}E[V({Y_{t_0}},{Y_{t_1}})\tilde{f}_1'(Y_{t_0},Y_{t_1}) |{Y_{t_0}}]} \right],}\\
		&{{Q_4} = E\left[ {\tilde{f}_1(Y_{t_0},Y_{t_1})V'({Y_{t_0}},{Y_{t_1}}){{\left( {E[V({Y_{t_0}},{Y_{t_1}})V'({Y_{t_0}},{Y_{t_1}})|{Y_{t_0}}]} \right)}^{ - 1}}E[\left( {\partial_{\theta_2} V({Y_{t_0}},{Y_{t_1}})} \right)|{Y_{t_0}}]} \right],}\\
		&{{Q_5} = E\left[ {\tilde{f}_1(Y_{t_0},Y_{t_1})V'({Y_{t_0}},{Y_{t_1}}){{\left( {E[V({Y_{t_0}},{Y_{t_1}})V'({Y_{t_0}},{Y_{t_1}})|{Y_{t_0}}]} \right)}^{ - 1}}E[V({Y_{t_0}},{Y_{t_1}})\tilde{f}_1'(Y_{t_0},Y_{t_1})|{Y_{t_0}}]} \right].}
	\end{align}
\end{proposition}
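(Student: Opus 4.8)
The statement is a sufficiency claim, and the plan is to check it directly against the optimality criterion already in hand. Recall from the derivation that produced \eqref{eq: optW} that any optimal weight has the form \eqref{eq: optW} with its coefficients constrained by $C_1=C$ (the constant in the relation $H_2(y)=H_1(y)C$) and by the two self-consistency relations $C_3=\frac{1}{n}(D_{1,1}')^{-1}(D_{2,1}^*)'$ and $C_2=D_{1,1}^{-1}S_{1,2}^*-D_{1,1}^{-1}S_{1,1}(D_{1,1}')^{-1}(D_{2,1}^*)'$; these are only implicit because $D_{2,1}^*$ and $S_{1,2}^*$ are themselves built from $W^*$. The idea is to make them explicit by substituting the ansatz \eqref{eq: optW} into $D_{2,1}^*$ and $S_{1,2}^*$, which turns the two relations into a genuine linear system for $(C_2,C_3)$ — precisely the system in the statement. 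Since replacing $W^*$ by $W^*A$ for a constant invertible $A$ leaves the estimator $\hat\theta_{n,2}$ (the solution of $F_{n,2}(\hat\theta_{n,1},\cdot)=0$), hence its limiting covariance, unchanged, we may normalise $C_1=I$.

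First I would record the two ingredients derived in the text preceding \eqref{eq: optW}: for any weight $W$ one has $D_{2,1}=nE[W'\partial_{\theta_1}V]$ and $S_{1,2}=E[\tilde f_1 V'W]$ with $\tilde f_1$ as in \eqref{eq:f1-tilde}, so in particular $D_{2,1}^*=nE[(W^*)'\partial_{\theta_1}V]$ and $S_{1,2}^*=E[\tilde f_1 V'W^*]$. Plugging \eqref{eq: optW} (with $C_1=I$) into these and conditioning each product on $Y_{t_0}$ — by the tower rule the ``outer'' factor, $\partial_{\theta_1}V$ or $\tilde f_1 V'$, may be replaced by its conditional expectation given $Y_{t_0}$, the remaining factors being functions of $Y_{t_0}$ alone — one recognises the resulting expectations as $Q_1,\dots,Q_5$. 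Using that $Q_2$ is symmetric, this yields the explicit linear forms $(D_{2,1}^*)'=n(Q_1+Q_2C_2+Q_3C_3)$ and $S_{1,2}^*=Q_4+Q_3'C_2+Q_5C_3$.

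Feeding these back into the self-consistency relations completes the equivalence. The $C_3$-relation reads $D_{1,1}'C_3=Q_1+Q_2C_2+Q_3C_3$, i.e. $-Q_2C_2+(D_{1,1}'-Q_3)C_3=Q_1$, the first block row. Using $(D_{1,1}')^{-1}(D_{2,1}^*)'=nC_3$ from that relation, the $C_2$-relation reads $D_{1,1}C_2=S_{1,2}^*-nS_{1,1}C_3=Q_4+Q_3'C_2+Q_5C_3-nS_{1,1}C_3$, i.e. $(D_{1,1}-Q_3')C_2+(nS_{1,1}-Q_5)C_3=Q_4$, the second block row; stacking gives the displayed system. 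For the direction actually asserted by the Proposition, assume $(C_2,C_3)$ solves the system and $C_1=I$. Reading the two block rows in reverse, together with the explicit forms of $(D_{2,1}^*)'$ and $S_{1,2}^*$, recovers precisely the two self-consistency relations; substituting them, and $E[VV'\mid Y_{t_0}=y]W^*(y)=E[\partial_{\theta_2}V\mid Y_{t_0}=y]+E[\partial_{\theta_1}V\mid Y_{t_0}=y]C_2+E[V\tilde f_1'\mid Y_{t_0}=y]C_3$ from \eqref{eq: optW}, into the formula \eqref{eq:H2} for $H_2$ makes the coefficients of $E[\partial_{\theta_1}V\mid Y_{t_0}=y]$ and of $E[V\tilde f_1'\mid Y_{t_0}=y]$ vanish, so that $H_2(y)=E[\partial_{\theta_2}V\mid Y_{t_0}=y]=H_1(y)$ by \eqref{eq:H1}. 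Thus $H_2=H_1\cdot I$, the criterion of Proposition \ref{prop:optimality} holds with constant matrix $I$, and $W^*$ is optimal.

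The main obstacle is the middle step: carrying the three constant matrices through the matrix products that define $D_{2,1}^*$ and $S_{1,2}^*$, applying iterated conditioning in the correct slot of each of the half-dozen terms, and matching each term with one of $Q_1,\dots,Q_5$ or a transpose while tracking which $Q_i$ are symmetric. Everything else is routine linear algebra, modulo the standing invertibility of $D_{1,1}$, $D_{2,2}$, and of $E[VV'\mid Y_{t_0}=y]$ that is implicit in \eqref{eq: optW}.
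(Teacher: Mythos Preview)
Your proposal is correct and follows essentially the same route as the paper: substitute the ansatz \eqref{eq: optW} into $(D_{2,1}^*)'$ and $S_{1,2}^*$ to obtain their linear expressions in $Q_1,\dots,Q_5$ and $C_1,C_2,C_3$, then feed these into the condition $H_2(y)=H_1(y)C$ (equivalently, your two self-consistency relations for $C_2,C_3$) and simplify to the stated block system. The only cosmetic difference is that the paper works directly with $H_2(y)$ and sets the coefficients of $E[\partial_{\theta_1}V\mid Y_{t_0}=y]$ and $E[V\tilde f_1'\mid Y_{t_0}=y]$ to zero, whereas you phrase the same two conditions as the self-consistency relations $C_2=D_{1,1}^{-1}S_{1,2}^*-D_{1,1}^{-1}S_{1,1}(D_{1,1}')^{-1}(D_{2,1}^*)'$ and $C_3=\tfrac{1}{n}(D_{1,1}')^{-1}(D_{2,1}^*)'$; either way the algebra collapses to the same pair of equations after normalising $C_1=I$.
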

In general, to compute $Q_1$ to $Q_5$ in closed-form is very difficult, even when analytical expressions for the eigenvalues and the eigenfunctions are available. To calculate them numerically also requires extensive computations.
In Section \ref{sec:num}, we numerically compute them in the problem of estimating the SubOU process. We then compare the standard error for each subordinator parameter using the optimal weighting matrix and the KS weighting matrix. The comparison reveals little difference between these two choices. Since the KS weighting matrix is much easier to compute, we will use it instead of the optimal one in our method.

\subsection{Numerical Approximations} \label{approEigenSec}
For estimating diffusions using eigenfunction based estimating functions, \cite{kessler1999estimating} only considers the case in which explicit expressions for the eigenvalues and the eigenfunctions are available. In general, they are not known in closed-form.

In this paper, we apply an efficient numerical method to compute the eigenvalues and the eigenfunctions accurately for the Sturm-Liouville (SL) problem associated with the given diffusion. A particularly attractive class of methods for solving the SL problem numerically is the coefficient approximation method (see \cite{pryce1993numerical}). Here, we use a particular type of coefficient approximation, known as constant perturbation method (CPM) with high-order corrections to achieve high-level of accuracy (see \cite{ledoux2004cp,ledoux2010solving}). This method can handle a large class of SL problems even with discontinuity in the coefficients. It is implemented in a Matlab package called MATSLISE, which we directly use in our implementation. In general, the accuracy of eigenvalues and eigenfunctions deteriorates as their order increases. Fortunately, we do not need to use a large number of eigenpairs in the estimating function \eqref{eq: F2-1}. Our numerical experiment in Section \ref{sec:num} shows that using only the first several eigenpairs suffices for statistical efficiency, which is in line with the finding of \cite{kessler1999estimating} for estimating diffusions.

In our estimation procedure, we first estimate the diffusion parameters using estimating function \eqref{eq: F1}. After they are obtained, we only need to run the CPM once to numerically calculate $(\lambda_i,\varphi_i(x))$ for $i$ from $1$ to $M$, because they only depend on the diffusion parameters. By separating the estimation of diffusion and subordinator parameters, we avoid running the CPM multiple times and thus making the estimation procedure computationally more efficient.

To run the CPM, we specify a finite grid $\Pi_1$ that covers a large enough region. The MATSLISE program returns approximations for the eigenvalues and for the eigenfunctions on the grid. To obtain an approximate value for an eigenfunction at a non-grid point, we use linear interpolation. Denote the $i$-th approximated eigenpair as $(\lambda^A_i,\varphi^A_i(x))$. Then, we approximate the original estimating function $F_{n,2}$ in \eqref{eq: F2-1} by the following
\begin{equation}\label{eq: F2-Approx}
F^A_{n,2}(\theta)=\sum_{i=1}^{n}(W^A_{KS}(y_{t_{i-1}};\theta))'V^A(y_{t_{i-1}},y_{t_i};\theta),
\end{equation}
where $ {V^A}(Y_t,Y_{t+\Delta};\theta)= {\varphi}^A_i(Y_{t+\Delta};\theta_1)-e^{-\phi(- {\lambda}^A_i;\theta)} {\varphi}^A_i(Y_t;\theta_1)$ and $W^A_{KS}$ is the approximated KS weighting matrix which solves
\[ P^A(y;\theta)W^A_{KS}(y;\theta)=Q^A(y;\theta).\]
Here,
\[{ {Q}^A_{i,j}}(y;\theta) = \frac{\partial }{{\partial {\theta_{2,j}}}}{e^{ - \phi ( - { \lambda^A _i};\theta )t}}{ {\varphi}^A _i}(y;\theta_1),\]
which can be calculated analytically as we know the Laplace exponent $\phi(\cdot)$ in closed-form. $P^A(y;\theta)$ is defined as in \eqref{eq:P-mat} by using the approximated eigenvalues and eigenfunctions.
To evaluate $P^A(y;\theta)$, we need to calculate the integral
\begin{equation}\label{eq:integral}
\int_l^r \varphi^A_i(x;\theta_1)\varphi^A_j(x;\theta_1)p^\phi(t,y,x;\theta)dx,
\end{equation}
which is equivalent to pricing an European option with payoff function $\varphi^A_i(x)\varphi^A_j(x)$ in a subordinate diffusion model. Recently, \cite{LiZhangSIAM} developed an efficient algorithm for pricing European options in general subordinate diffusion models. Their method requires specifying a grid to discretize the state space. In our implementation, we use a uniform grid $\Pi_2$ with step size $h$, although non-uniform grids can be used in Li and Zhang's method. We denote the approximation to $P^A(y;\theta)$ using their method by $P^h(y;\theta)$ and the resulting weighting matrix and the estimating function by $W^h_{KS}$ and $F^h_{n,2}$. In general, $\Pi_1$ and $\Pi_2$ can be different. Inaccuracy in the eigenpairs can cause significant loss of precision in the estimator. Therefore, in our implementation, we choose a fine $\Pi_1$ for the CPM, which guarantees high level of accuracy in the eigenvalues and the eigenfunctions. $\Pi_2$ does not need to be as fine as $\Pi_1$ and we choose it to be a sub-grid of $\Pi_1$. The error of using $F^h_{n,2}$ to approximate the exact $F_{n,2}$ is dominated by the error in calculating the integral \eqref{eq:integral}, which is $O(h^2)$ by \cite{LiZhangErr} (the approximation error for the eigenvalues and the eigenfunctions is at much higher order than $O(h^2)$ because the CPM is used with high-order corrections). The error order for approximating \eqref{eq:integral} can be further improved using extrapolation as pointed out in \cite{LiZhangErr}. Using two rather coarse grids $\Pi_{2}$ and $\Pi'_{2}$, one can extrapolate the results from these two grids to reduce the error to $O(h^3)$. We first calculate $P^h(y;\theta)$ and $W^h_{KS}(y;\theta)$ for $y$ on the grid $\Pi_2$. To obtain $W^h_{KS}(y;\theta)$ at non-grid points, we apply linear interpolation.

%The error of $F^h_{n,2}$ is also $O(h^2)$.

\section{Consistency and Asymptotic Normality} \label{sec:CAN}
The subordinate diffusion parameter space is denoted by $\Theta$, which is assumed to be an open subset of $\mathbb{R}^p$. $\bar{\theta}$ is the true value of $\theta$. Let $\bar Q(x,y)=q(x;\bar \theta)p^\phi(\Delta,x,y;\bar \theta)$, which is the joint density of $(Y_0,Y_\Delta)$ under the true parameter value if the initial density is the stationary one. $\bar E$ denotes taking expectation under the true parameter value $\bar\theta$. Recall the vector functions $f_1$ and $f_2$ in \eqref{eq: F1} and \eqref{eq: F2}. Let $f(x,y;\theta)=(f_1(x,y;\theta)',f_2(x,y;\theta)')'$, and $F_n(\theta)=(F_{n,1}(\theta)',F_{n,2}(\theta)')'$. In our analysis, we consider an arbitrary weighting matrix $W$ for $F_{n,2}$. The following assumption is imposed (similar assumptions are also made in \cite{kessler1999estimating} and \cite{sorensen1999asymptotics}).

%\begin{assumption}\label{assump:misc}
%(a) $f_1(x;\theta_1)$ is twice continuously differentiable with respect to $\theta_1$ and $x$. $f_2(x,y;\theta_1,\theta_2)$ is twice continuously differentiable with respect to $x, y, \theta_1$ and $\theta_2$.

%\noindent(b) Each element of $f_1$ and its first and second partial derivatives w.r.t. $\theta_1$, as well as each element of $f_2$ and its first and second partial derivatives w.r.t. $\theta$ are locally dominated integrable w.r.t. $\bar Q$. (A function $g(x,y;\theta)$ is locally dominated integrable w.r.t. $\bar Q$ if for each $\theta \in \Theta$, there exists a neighborhood $U(\theta)$ and a non-negative $\bar Q$ integrable function $h_{\theta}(x,y)$ such that $|g(x,y;\theta')|<h_{\theta}(x,y)$ for all $(x,y,\theta')\in (l,r)\times (l,r)\times U(\theta)$; see \cite{KesslerLindnerSorensen12}, p.5.).

%\noindent(c) Each element of $f_1(x,y;\theta_1)$ and $f_2(x,y;\theta)$ is square integrable w.r.t. $\bar Q$ for all $\theta \in \Theta$.

%\noindent(d) Let $A_1(\bar \theta_1)=\bar E[\partial_{\theta_1} f_1(Y_{t_0},Y_{t_1}; \theta_1)]$ and $A_2(\bar\theta)=\bar E[\partial_{\theta_2} f_2(Y_{t_0},Y_{t_1}; \theta)]$. Both $A_1(\bar \theta_1)$ and $A_2(\bar\theta)$ are non-singular.
%\end{assumption}

\begin{assumption}\label{assump:misc}
	$\bar \theta \in \Theta$ and a neighbourhood B of $\bar \theta$ in $\Theta$ exists such that the following conditions hold.
	
	\noindent(a) $f_1(x;\theta_1)$ is continuously differentiable w.r.t. $\theta_1$ on B for all $x$. $f_2(x,y;\theta_1,\theta_2)$ is continuously differentiable w.r.t. $\theta_1$ and $\theta_2$ on B for all $x$ and $y$.
	
	\noindent(b) Each element of the first partial derivatives of $f_1$ w.r.t. $\theta_1$, as well as each element of the first partial derivatives of $f_2$ w.r.t. $\theta$ are dominated for all $\theta \in B$ by a function which is integrable w.r.t. $\bar Q$.
	
	\noindent(c) Each element of $f_1(x,y;\theta_1)$ and $f_2(x,y;\theta)$ is integrable w.r.t. $\bar Q$ for all $\theta \in B\backslash \{\bar\theta\}$, and square-integrable w.r.t. $\bar Q$ for $\theta = \bar \theta$.
		
	\noindent(d) Let $A_1(\bar \theta_1)=\bar E[\partial_{\theta_1} f_1(Y_{t_0},Y_{t_1}; \bar{\theta}_1)]$ and $A_2(\bar\theta)=\bar E[\partial_{\theta_2} f_2(Y_{t_0},Y_{t_1}; \bar{\theta})]$. $A_1(\bar \theta_1)$ and $A_2(\bar\theta)$ are non-singular.
\end{assumption}

A function $g(x,y;\theta)$ is locally dominated integrable w.r.t. $\bar Q$ if for each $\theta \in \Theta$, there exists a neighborhood $U(\theta)$ and a non-negative $\bar Q$ integrable function $h_{\theta}(x,y)$ such that $|g(x,y;\theta')|<h_{\theta}(x,y)$ for all $(x,y,\theta')\in (l,r)\times (l,r)\times U(\theta)$; see \cite{KesslerLindnerSorensen12}, p.5.
\begin{assumption} \label{assum: globalCons}
	
(a) $\bar E[ f(Y_{t_0},Y_{t_1}; \theta)]\ne 0$ for all $\theta \ne \bar \theta$.

\noindent(b) Each element of $f_1$ and  each element of $f_2$ are locally dominated integrable w.r.t. $\bar Q$.
\end{assumption}

\begin{remark}
When the weighting matrix is given by the KS one, it can be shown that $A_2(\bar\theta)$ is positive definite and hence non-singular as long as $M\ge p_2$.
\end{remark}

We next develop the Central Limit Theorem (CLT) for our estimating function. Since $F_{n,1}$ is not a martingale, to develop its CLT, we need the following property.
\begin{proposition}
Under Assumption \ref{assump:diff}, the diffusion transition operator $\gP_t$ is a strong contraction for any $t>0$, that is, there exists some $\delta>0$ such that $\|\gP_t f\|_2\le \exp(-\delta t)\|f\|_2$ for any $f\in L^2(I,q)$ such that $\int_I f(x)q(x)dx=0$ ($\|\cdot\|_2$ denotes the $L^2(I,q)$ norm). $\gP^\phi_t$ is also a strong contraction for any $t>0$.
\end{proposition}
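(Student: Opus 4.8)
The plan is to use the purely discrete spectrum of $\gG$ together with the eigenfunction expansion. Under Assumption \ref{assump:diff}, $\{\varphi_n\}_{n\ge0}$ is an orthonormal basis of $L^2(I,q)$, the eigenvalues satisfy $0=\lambda_0>\lambda_1>\lambda_2>\cdots$, and $\gP_t\varphi_n=e^{\lambda_n t}\varphi_n$. First I would take any $f\in L^2(I,q)$ with $\int_I f(x)q(x)dx=0$ and expand $f=\sum_{n\ge0}a_n\varphi_n$ with $a_n=\int_I f\varphi_n q\,dx$. Since $\varphi_0\equiv1$ by \eqref{eq:0-th-eigen}, the mean-zero condition gives $a_0=\int_I f(x)q(x)dx=0$, so the expansion starts from $n=1$. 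Then $\gP_t f=\sum_{n\ge1}a_n e^{\lambda_n t}\varphi_n$ and, by Parseval,
\[
\|\gP_t f\|_2^2=\sum_{n\ge1}a_n^2 e^{2\lambda_n t}\le e^{2\lambda_1 t}\sum_{n\ge1}a_n^2=e^{2\lambda_1 t}\|f\|_2^2,
\]
using $\lambda_n\le\lambda_1<0$ for all $n\ge1$. Setting $\delta=-\lambda_1>0$ gives $\|\gP_t f\|_2\le e^{-\delta t}\|f\|_2$, which is the claim for $\gP_t$.

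For $\gP^\phi_t$ the argument is identical, replacing $e^{\lambda_n t}$ by $e^{-\phi(-\lambda_n)t}$ via \eqref{eq:SubDiffGenEigen}. The only point to check is that $-\phi(-\lambda_1)<0$, i.e.\ that $\phi(-\lambda_1)>0$. From the L\'evy--Khintchine representation \eqref{eq:LTLevySub}, $\phi(\mu)=\gamma\mu+\int_{(0,\infty)}(1-e^{-\mu s})\nu(ds)$ is strictly increasing on $[0,\infty)$ with $\phi(0)=0$ (it is nonzero as a subordinator, so either $\gamma>0$ or $\nu\ne0$), hence $\phi(-\lambda_1)=\phi(\delta)>0$. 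Writing $\delta^\phi:=\phi(-\lambda_1)>0$ and using $\phi(-\lambda_n)\ge\phi(-\lambda_1)$ for $n\ge1$ (again by monotonicity of $\phi$), the same Parseval computation yields $\|\gP^\phi_t f\|_2\le e^{-\delta^\phi t}\|f\|_2$ for all mean-zero $f$.

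There is no real obstacle here; the only subtlety is bookkeeping about the $n=0$ mode. One should note that the spectral gap is genuinely bounded away from zero uniformly in $t$ precisely because the spectrum is discrete and $\lambda_0=0$ is simple and isolated, so $\lambda_1<0$ is strictly negative — this is exactly what "strong contraction" (exponential, as opposed to merely $\|\gP_t f\|_2<\|f\|_2$) requires, and it is where Assumption \ref{assump:diff}(2) enters. If one prefers to avoid the eigenfunction expansion, an alternative is to note that $\gP_t$ restricted to the orthogonal complement of the constants is a self-adjoint (or, in the non-reversible case, normal) contraction semigroup whose generator has spectral bound $\lambda_1<0$, and invoke the spectral mapping theorem; but the direct Parseval argument above is cleanest and self-contained given what has already been established in the paper.
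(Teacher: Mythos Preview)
Your argument for $\gP_t$ is essentially identical to the paper's: expand in the orthonormal eigenbasis, kill the $n=0$ mode via the mean-zero condition, and bound by $e^{2\lambda_1 t}$ using Parseval.

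For $\gP^\phi_t$ your proof is correct but proceeds slightly differently from the paper. You repeat the Parseval argument using the subordinated eigenvalues $e^{-\phi(-\lambda_n)t}$ from \eqref{eq:SubDiffGenEigen} together with monotonicity of $\phi$. The paper instead invokes the subordination integral representation $\gP^\phi_t f=\int_{(0,\infty)}\gP_u f\,s_t(du)$, applies the triangle inequality for the Bochner integral, and feeds in the already-proved bound $\|\gP_u f\|_2\le e^{-\delta u}\|f\|_2$ to obtain $\|\gP^\phi_t f\|_2\le e^{-\phi(\delta)t}\|f\|_2$. Both routes yield exactly the same contraction rate $\phi(-\lambda_1)=\phi(\delta)$; the paper's version has the mild advantage that it does not re-use the spectral structure of $\gP^\phi_t$ and would work for any subordinated strongly contracting semigroup, while yours stays closer to the eigenfunction framework already set up.
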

\begin{proof}
For any $f\in L^2(I,q)$, $\gP_tf$ admits an eigenfunction expansion
\begin{equation}
\gP_t f(x) = \sum_{i=0}^\infty f_i e^{\lambda_i t}\varphi_i(x),\ f_i=\int_I f(x)\varphi_i(x)q(x)dx.
\end{equation}
From \eqref{eq:0-th-eigen}, $\lambda_0=0$, $\varphi_0(x)\equiv 1$. So for $f$ such that $\int_I f(x)q(x)dx=0$, $f_0=0$. Using the orthonormality of $\{\varphi_i(x), i=1,2,\cdots\}$ and that $0>\lambda_1>\lambda_i$ for $i>1$, we obtain
\begin{equation}
\|\gP_t f\|_2^2 = \sum_{i=1}^\infty f_i^2 e^{2\lambda_i t}< e^{2\lambda_1 t}\sum_{i=1}^\infty f_i^2 = e^{2\lambda_1 t} \|f\|_2^2.
\end{equation}
Thus $\gP_t$ is a strong contraction. For $\gP^\phi_t$, using the definition of subordination, for $f\in L^2(I,q)$ such that $\int_I f(x)q(x)dx=0$,
\begin{align}
\|\gP^\phi_tf\|_2&=\left\|\int_{(0,\infty)}\gP_uf s_t(du)\right\|_2\le \int_{(0,\infty)}\|\gP_uf \|_2s_t(du)\\
&\le \int_{(0,\infty)}\exp(-\delta u)\|f\|_2s_t(du)=e^{-\phi(\delta)t}\|f\|_2.
\end{align}
Here, $s_t(du)$ is the distribution of $T_t$. Since $\phi(\delta)>0$, $\gP^\phi_t$ is also a strong contraction.
\end{proof}

Now we can investigate the convergence of $F_n(\theta)/\sqrt{n}$.

\begin{proposition} \label{prop:CLT}
Under Assumption \ref{assump:diff} and \ref{assump:misc},	
\begin{equation} \label{eq: CLT}
\frac{1}{{\sqrt n }}{F_n}({\bar \theta}) \to N(0,\Sigma(\bar{ \theta}) ), \quad \Sigma(\bar{ \theta})  = \left[ {\begin{array}{*{20}{c}}
	{{\Sigma _1(\bar{ \theta})}}&{{\Sigma _2(\bar{ \theta})}}\\
	{{\Sigma '_2(\bar{ \theta})}}&{{\Sigma _3(\bar{ \theta})}}
	\end{array}} \right],
\end{equation}
where
\begin{align}
{\Sigma_1}(\bar{ \theta}) &= \bar E\left[ {{f_1}({Y_{t_0}},{Y_{t_1}};{\bar{ \theta}}){f_1}({Y_{t_0}},{Y_{t_1}};{\bar{ \theta}})'} \right] + \bar E\left[ {{f_1}({Y_{t_0}},{Y_{t_1}};{\bar{ \theta}})\left( {{{(I- \gP_{\Delta}^\phi )}^{ - 1}}{\mathfrak{f}_1}({Y_{t_0}};{\bar{ \theta}})} \right)'} \right]   \\
&+ \bar E\left[ {\left( {{{(I- \gP_{\Delta}^\phi)}^{ - 1}}{\mathfrak{f}_1}({Y_{t_0}};{\bar{ \theta}})} \right){f_1}({Y_{t_0}},{Y_{t_1}};{\bar{ \theta}})'} \right],\label{eq: Sigma1}\\
{\Sigma_2}(\bar{\theta}) &=\bar E\Big[ \Big( f_1(Y_{t_0},Y_{t_1};\bar{\theta}) - \mathfrak{f}_1(Y_{t_0};\bar{\theta}) + (I- \gP_{\Delta}^\phi )^{-1}\mathfrak{f}_1(Y_{t_1};\bar{\theta})-\gP_{\Delta}^\phi(I- \gP_{\Delta}^\phi )^{ - 1}\mathfrak{f}_1(Y_{t_0};\bar{\theta}) \Big)\\
&f_2(Y_{t_0},Y_{t_1};\bar{\theta})' \Big],\label{eq: Sigma2}\\
{\Sigma _3}(\bar{ \theta}) &=\bar E\left[ {{f_2}({Y_{t_0}},{Y_{t_1}};{\bar{ \theta}}){f_2}({Y_{t_0}},{Y_{t_1}};{\bar{ \theta}})'} \right],\label{eq: Sigma3}
\end{align}
where ${\mathfrak{f}_1}(y;{\bar \theta}) = \bar E\left[ {{f_1}({Y_{t_0}},{Y_{t_1}};{\bar \theta})|{Y_{t_0}} = y} \right]$.
\end{proposition}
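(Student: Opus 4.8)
The plan is to recast the claim as a multivariate central limit theorem for a stationary ergodic, square--integrable martingale--difference sequence, obtained after a martingale approximation that replaces the non--martingale part $F_{n,1}$ by an $L^2$ martingale with an asymptotically negligible remainder, using the spectral gap of $\gP^\phi_\Delta$ just established. As set--up: by Lemma~\ref{lemma:ergodicity} the sampled chain $\{Y_{t_i}\}$ is stationary and ergodic with marginal law $\gQ$, hence any fixed measurable functional of $(Y_{t_{i-1}},Y_{t_i})$ yields a stationary ergodic sequence; Assumption~\ref{assump:misc}(c) makes $f_1(Y_{t_0},Y_{t_1};\bar\theta)$ and $f_2(Y_{t_0},Y_{t_1};\bar\theta)$ square--integrable w.r.t.\ $\bar Q$; the moment conditions~\eqref{eq: diffCon1}--\eqref{eq: diffCon2} and the eigenfunction identity~\eqref{eq:SubDiffGenEigen} together with~\eqref{eq:0-th-eigen} give $\bar E[f(Y_{t_0},Y_{t_1};\bar\theta)]=0$; and $F_{n,2}$ is already a martingale, i.e.\ $f_2(Y_{t_0},Y_{t_1};\bar\theta)$ is a martingale difference w.r.t.\ $\{\gF_{t_i}\}$, so only $F_{n,1}$ needs to be approximated.

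\textbf{Martingale approximation of $F_{n,1}$.} Put $\mathfrak{f}_1(y)=\bar E[f_1(Y_{t_0},Y_{t_1};\bar\theta)\mid Y_{t_0}=y]$; by conditional Jensen it is square--integrable and $\bar E[\mathfrak{f}_1(Y_{t_0})]=0$, so $\mathfrak{f}_1$ lies in the mean--zero subspace $L_0^2(I,q)$, which $\gP^\phi_\Delta$ maps into itself. Since $\gP^\phi_\Delta$ is a strong contraction on $L_0^2(I,q)$, the Neumann series $u:=\sum_{j\ge0}(\gP^\phi_\Delta)^j\mathfrak{f}_1=(I-\gP^\phi_\Delta)^{-1}\mathfrak{f}_1\in L_0^2(I,q)$ converges and solves the Poisson equation $u-\gP^\phi_\Delta u=\mathfrak{f}_1$. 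Decomposing, for each $i$,
\[
f_1(y_{t_{i-1}},y_{t_i};\bar\theta)=\bigl(f_1(y_{t_{i-1}},y_{t_i};\bar\theta)-\mathfrak{f}_1(y_{t_{i-1}})\bigr)+\bigl(u(y_{t_{i-1}})-u(y_{t_i})\bigr)+\bigl(u(y_{t_i})-\gP^\phi_\Delta u(y_{t_{i-1}})\bigr),
\]
the first and third brackets are martingale differences w.r.t.\ $\{\gF_{t_i}\}$ (Markov property, and $\gP^\phi_\Delta u(y_{t_{i-1}})=\bar E[u(Y_{t_i})\mid\gF_{t_{i-1}}]$), while the middle bracket telescopes. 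Hence $F_{n,1}(\bar\theta)=\sum_{i=1}^n\xi_i^{(1)}+u(y_{t_0})-u(y_{t_n})$ with $\xi_i^{(1)}:=f_1(y_{t_{i-1}},y_{t_i};\bar\theta)+u(y_{t_i})-u(y_{t_{i-1}})$; since $u(Y_{t_0})$ and $u(Y_{t_n})$ both carry the $\gQ$--law of $u\in L^2$, the boundary term is $O_p(1)=o_p(\sqrt n)$. Stacking $\xi_i^{(1)}$ over $f_2(y_{t_{i-1}},y_{t_i};\bar\theta)$ to form $\xi_i$, we obtain $n^{-1/2}F_n(\bar\theta)=n^{-1/2}\sum_{i=1}^n\xi_i+o_p(1)$, where $\{\xi_i\}$ is a stationary ergodic, square--integrable martingale--difference sequence.

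\textbf{The CLT and the covariance blocks.} Apply the classical CLT for stationary ergodic square--integrable martingale differences (via the Cram\'er--Wold device in the vector case): $n^{-1/2}\sum_{i=1}^n\xi_i\Rightarrow N(0,\Sigma(\bar\theta))$ with $\Sigma(\bar\theta)=\bar E[\xi_1\xi_1']$. The $(2,2)$ block is at once $\bar E[f_2(Y_{t_0},Y_{t_1};\bar\theta)f_2(Y_{t_0},Y_{t_1};\bar\theta)']=\Sigma_3(\bar\theta)$. Using $\gP^\phi_\Delta u(Y_{t_0})=u(Y_{t_0})-\mathfrak{f}_1(Y_{t_0})$, one rewrites $\xi_1^{(1)}=f_1-\mathfrak{f}_1(Y_{t_0})+(I-\gP^\phi_\Delta)^{-1}\mathfrak{f}_1(Y_{t_1})-\gP^\phi_\Delta(I-\gP^\phi_\Delta)^{-1}\mathfrak{f}_1(Y_{t_0})$, so the $(1,2)$ block $\bar E[\xi_1^{(1)}f_2(Y_{t_0},Y_{t_1};\bar\theta)']$ is exactly $\Sigma_2(\bar\theta)$ of~\eqref{eq: Sigma2}. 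For the $(1,1)$ block, expand $\bar E[\xi_1^{(1)}(\xi_1^{(1)})']$ and collapse the $u$--cross terms using stationarity ($\bar E[u(Y_{t_1})u(Y_{t_1})']=\bar E[u(Y_{t_0})u(Y_{t_0})']$), the Poisson relation, the Markov identity $\bar E[u(Y_{t_1})\mid Y_{t_0}]=u(Y_{t_0})-\mathfrak{f}_1(Y_{t_0})$ and the tower rule $\bar E[f_1(Y_{t_0},Y_{t_1};\bar\theta)\mid Y_{t_0}]=\mathfrak{f}_1(Y_{t_0})$ (self--adjointness of $\gP^\phi_\Delta$ on $L^2(I,q)$ can be invoked to symmetrize the surviving cross terms), giving~\eqref{eq: Sigma1}.

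\textbf{Main obstacle.} The conceptual step---martingale approximation driven by the spectral gap---is short once the strong--contraction property is available. The real work lies in (i) the integrability and domination conditions (Assumption~\ref{assump:misc}(b)--(c)) needed to guarantee $u\in L^2(I,q)$ and that the boundary remainder is genuinely $o_p(\sqrt n)$, and (ii) the somewhat delicate bookkeeping in the last step to match the expansion of $\bar E[\xi_1\xi_1']$ term by term with the stated closed forms of $\Sigma_1(\bar\theta)$ and $\Sigma_2(\bar\theta)$.
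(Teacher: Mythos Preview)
Your proposal is correct and follows essentially the same route as the paper. The paper's own proof merely notes that the strong contraction yields $(I-\gP^\phi_\Delta)^{-1}$, cites \cite{hansen1995back}, p.~797--798 for the martingale approximation of $F_{n,1}$ and \cite{billingsley1961lindeberg}, Theorem~1 for the martingale CLT, and omits all details; what you have written \emph{is} precisely the Hansen--Scheinkman Poisson-equation argument combined with Billingsley's martingale CLT, so you have supplied the details the paper leaves out, and your stacking of $\xi_i^{(1)}$ with $f_2$ into a single martingale-difference vector is the clean way to obtain the joint limit with the cross block $\Sigma_2$.
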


\begin{proof}
The strong contractioness of $\gP^\phi_t$ guarantees the existence of $(I-\gP^\phi_t)^{-1}$. $F_{n,1}$ is not a martingale. To develop CLT for it, we can adapt the arguments in \cite{hansen1995back}, p.797-798. $F_{n,2}$ is a martingale, so we can apply the martingale CLT (\cite{billingsley1961lindeberg}, Theorem 1). The details are omitted here.
\end{proof}

Based on Proposition \ref{prop:CLT}, we have the following consistency and asymptotic normality result by applying Theorem 1.2.2 of \cite{sorensen2012estimating}.
\begin{thm} \label{CAN1}
Under Assumption \ref{assump:diff} and \ref{assump:misc}, an estimator $\hat{\theta}_n$ exists with a probability tending to one as $n\to \infty$.
 Moreover,
\[\hat{\theta}_n \overset{p}{\to} \bar{ \theta} ,\]
and
\[\sqrt{n}(\hat{\theta}_n - \bar{ \theta}) \to \mathcal {N}(0,\mathcal{V}), \]
where $\mathcal{V}=A(\bar{ \theta})^{-1}\Sigma(\bar{ \theta})(A(\bar{ \theta})^{-1})'$ and $A(\bar \theta)=\bar E[\partial_\theta f(Y_{t_0},Y_{t_1};\bar{\theta})]$ (the invertibility of $A(\bar \theta)$ is guaranteed by Assumption \ref{assump:misc} (d)). Moreover, under Assumption \ref{assum: globalCons}, the estimator $\hat{\theta}_n$ is the unique $F_n$-estimator on any bounded subset of $\Theta$ containing $\bar \theta$ with probability tending to 1 as $n \to \infty$.
\end{thm}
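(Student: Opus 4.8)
The two-step estimator is in fact the $F_n$-estimator associated with the stacked estimating function $F_n=(F_{n,1}(\theta_1)',F_{n,2}(\theta)')'$: because $f_1$ does not involve $\theta_2$ and $F_{n,1}(\theta_1)=0$ is solved first, the pair $\hat\theta_n=(\hat\theta_{n,1}',\hat\theta_{n,2}')'$ solves $F_n(\theta)=0$, and conversely any zero of $F_n$ has this triangular form. So the plan is to verify the hypotheses of Theorem 1.2.2 of \cite{sorensen2012estimating} for $F_n$ with limit function $g(\theta):=\bar E[f(Y_{t_0},Y_{t_1};\theta)]$, namely: (i) $n^{-1}F_n(\theta)\overset{p}{\to}g(\theta)$ for each $\theta$ in a neighbourhood of $\bar\theta$, where $g$ is continuous and $g(\bar\theta)=0$; (ii) $n^{-1}\partial_\theta F_n(\theta)\overset{p}{\to}A(\theta):=\bar E[\partial_\theta f(Y_{t_0},Y_{t_1};\theta)]$ uniformly on a neighbourhood of $\bar\theta$, with $A$ continuous and $A(\bar\theta)$ non-singular; and (iii) $n^{-1/2}F_n(\bar\theta)\to N(0,\Sigma(\bar\theta))$, which is precisely Proposition \ref{prop:CLT}. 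Note that $g(\bar\theta)=0$ because the moment conditions \eqref{eq: diffCon1}--\eqref{eq: diffCon2} hold at the true parameter and $F_{n,2}$ is a martingale.

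For (i) and (ii) I would rely on Lemma \ref{lemma:ergodicity}: the sampled chain $\{Y_{t_i}\}$ is ergodic with stationary density $q$, hence the two-step chain $\{(Y_{t_{i-1}},Y_{t_i})\}$ is ergodic with invariant law $\bar Q$, and the ergodic theorem applies to every $\bar Q$-integrable functional. Assumption \ref{assump:misc}(c) then gives the pointwise convergence in (i). For (ii), Assumption \ref{assump:misc}(a) supplies differentiability and Assumption \ref{assump:misc}(b) a $\bar Q$-integrable dominating function for the entries of $\partial_{\theta_1}f_1$ and $\partial_\theta f_2$ on $B$; a standard uniform law of large numbers argument (combining the ergodic theorem with this domination, after shrinking $B$ to a compact sub-neighbourhood of $\bar\theta$) yields the uniform convergence in (ii) and the continuity of $A$. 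The matrix $A(\bar\theta)$ is block lower triangular with diagonal blocks $\bar E[\partial_{\theta_1}f_1]=A_1(\bar\theta_1)$ and $\bar E[\partial_{\theta_2}f_2]=A_2(\bar\theta)$, both non-singular by Assumption \ref{assump:misc}(d), so $A(\bar\theta)$ is non-singular. Theorem 1.2.2 of \cite{sorensen2012estimating} then yields, with probability tending to one, a root $\hat\theta_n$ of $F_n$ with $\hat\theta_n\overset{p}{\to}\bar\theta$; a Taylor expansion of $0=F_n(\hat\theta_n)$ about $\bar\theta$ gives $\sqrt n(\hat\theta_n-\bar\theta)=-\big(n^{-1}\partial_\theta F_n(\theta_n^*)\big)^{-1}n^{-1/2}F_n(\bar\theta)$ for some $\theta_n^*$ between $\hat\theta_n$ and $\bar\theta$, and since $\theta_n^*\overset{p}{\to}\bar\theta$, (ii), Proposition \ref{prop:CLT} and Slutsky's theorem give $\sqrt n(\hat\theta_n-\bar\theta)\to N(0,\mathcal V)$ with $\mathcal V=A(\bar\theta)^{-1}\Sigma(\bar\theta)(A(\bar\theta)^{-1})'$.

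For the final, uniqueness, claim I would invoke the second part of Theorem 1.2.2 of \cite{sorensen2012estimating}, which requires, in addition, the global identifiability condition $g(\theta)\ne0$ for all $\theta\ne\bar\theta$ (Assumption \ref{assum: globalCons}(a)) and the local dominated integrability of each component of $f_1$ and $f_2$ (Assumption \ref{assum: globalCons}(b)); the latter upgrades the pointwise convergence in (i) to uniform convergence of $n^{-1}F_n$ on every bounded subset of $\Theta$, which rules out roots outside any neighbourhood of $\bar\theta$ and makes $\hat\theta_n$ the unique $F_n$-estimator on any bounded set containing $\bar\theta$ with probability tending to one. I expect the main obstacle to be the two uniform convergence statements: $F_{n,1}$ is \emph{not} a martingale (only $F_{n,2}$ is), so no martingale strong law is available and one must work purely from the ergodicity of the sampled chain in Lemma \ref{lemma:ergodicity} together with the domination hypotheses, taking care that $B$ and $\Theta$ are only open. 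Throughout, the eigenpairs entering $f_2$ are treated as exact; the extra error from the numerical approximations $(\lambda_i^A,\varphi_i^A)$ and $W_{KS}^h$ is handled separately later in the section.
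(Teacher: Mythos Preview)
Your proposal is correct and follows the same route as the paper: both reduce the theorem to an application of Theorem~1.2.2 of \cite{sorensen2012estimating}, with Proposition~\ref{prop:CLT} supplying the CLT ingredient and Lemma~\ref{lemma:ergodicity} underwriting the ergodic-theorem steps. The paper's proof is in fact a one-line citation of Theorem~1.2.2; you have simply spelled out the verification of its hypotheses (ergodicity of the sampled chain for the LLN parts, the block lower-triangular structure of $A(\bar\theta)$ for non-singularity, and Assumption~\ref{assum: globalCons} for the uniqueness clause), which is entirely in line with the paper's later, more detailed arguments in Lemma~\ref{JAlemma}, Lemma~\ref{FhLemma}, and Theorem~\ref{CAN2}.
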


Theorem \ref{CAN1} does not consider that $\hat{\theta}_{n,2}$ generally cannot be computed exactly. Next, we take into consideration the effect of numerical approximations that are used in Section \ref{approEigenSec} to compute the weighting matrix in the KS way. In the following, $\|v\|$ is the Euclidean norm of vector $v$, and for matrix $A$, $\|A\|:=\sqrt{\text{tr}(AA')}$. Recall that $h$ is the step size of the grid $\Pi_2$.
The computation of $\hat{\theta}_{n,1}$ does not require discretization. For the estimator of $\theta_2$, we write it as $\hat{\theta}^h_{n,2}$ because it depends on the grid that is used. We put $F^h_n(\theta)=(F_{n,1}(\theta)',F^h_{n,2}(\theta)')'$ and let $J_n^h(\theta)=\partial_\theta F_n^h(\theta)$. Suppose that
\begin{align}
&\|F_{n,2}^h(\theta)-F_{n,2}(\theta)\|\le C(\theta) n h^p\ \text{almost surely},\label{eq:error_estfunc}\\
&|J_n^h(\theta)_{i,j}-J_n(\theta)_{i,j}|\le C_{ij}(\theta) n h^q\ \text{almost surely\ for}\ p_1<i\le p_1+p_2, 1\le j\le p_1+p_2, \label{eq:error_estfucderiv}
\end{align}
for some $p,q>0$, and $C(\theta)$, $C_{ij}(\theta)$ are continuous with respect to $\theta$. In \eqref{eq:error_estfunc}, $p=2$ without extrapolation and $p=3$ with extrapolation in view of the discussions in Section \ref{approEigenSec}. As $F_n^h(\theta)_i=F_n(\theta)_i$ for $1\le i\le p_1$, \eqref{eq:error_estfunc} implies that
\begin{equation}\label{eq:error_estfunc1}
\|F_{n}^h(\theta)-F_{n}(\theta)\|\le C(\theta) n h^p\ \text{almost surely}.
\end{equation}
In general, the step size $h$ depends on the number of observations $n$, so we will write it as $h_n$ below whenever necessary. Our main result is that previous conclusions about consistency and asymptotic normality hold under suitable assumptions on the convergence of $h_n$. We need the next two lemmas.

%\begin{lemma} \label{JAlemma1}
%Under Assumption \ref{assump:diff} and \ref{assump:misc} (a) and (b), assume $\lim\limits_{n\to \infty} h_n=0.$ Then for any compact subset $S$ of $B$ (defined in Assumption \ref{assump:misc})
%	\[ \sup_{\theta \in S}|n^{-1}J_n^h(\theta)_{i,j}-A({ \theta})_{i,j}| \to 0 \]
%	almost surely. Here $A(\theta)=\bar E[\partial_\theta f(Y_{t_0},Y_{t_1};\theta)]$.
%\end{lemma}
%\begin{proof}
%On the compact subset $S$, there exists a finite number $K$ such that $|C_{ij}(\theta)|\le K$ for $\theta \in S$ since $C_{ij}(\theta)$ is continuous. Then
%	\begin{align}
%	& \sup_{\theta \in S }|n^{-1}J_n^h(\theta)_{i,j}-A({ \theta})_{i,j}|\\
%	&\le \sup_{\theta \in S}|n^{-1}J_n(\theta)_{i,j}-A({ \theta})_{i,j}|+\sup_{\theta \in S}|n^{-1}J_n^h(\theta)_{i,j}-n^{-1}J_n({ \theta})_{i,j}|\\
%	&\le \sup_{\theta \in S}|n^{-1}J_n(\theta)_{i,j}-A({ \theta})_{i,j}|+\sup_{\theta \in S}|C(\theta)_{i,j}|h_n^q\\
%	&\le \sup_{\theta \in S}|n^{-1}J_n(\theta)_{i,j}-A({ \theta})_{i,j}|+K h_n^q
%	\end{align}
%	The first term converges to $0$ almost surely by Lemma 1.2.3 in \cite{sorensen2012estimating}. Obviously the second term also converges to $0$.
%\end{proof}

\begin{lemma} \label{JAlemma}
	Let $J_n(\theta)=\partial_\theta F_n(\theta)$ (recall the notation introduced at the end of Section \ref{Sec:intro}).  Let $\epsilon_n>0$ be a decreasing sequence and $\lim_{n\to\infty}\epsilon_n=0$. Under Assumption \ref{assump:diff} and \ref{assump:misc}, we have
	\[ \sup_{\theta \in \bar B_{\epsilon_n} (\bar{ \theta})}|n^{-1}J_n(\theta)_{i,j}-A(\bar{ \theta})_{i,j}| \to 0 \]
	almost surely as $n \to \infty$, where  $\bar B_{\epsilon_n} ({\bar{ \theta}}) = \{ \theta \in \Theta:\|\theta - {\bar \theta}\| \le \epsilon_n \} $.
\end{lemma}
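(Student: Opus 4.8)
The plan is to establish the claim in two stages, treating first the convergence for a fixed $\theta$ and then upgrading to the uniform statement over the shrinking balls $\bar B_{\epsilon_n}(\bar\theta)$. Recall that $J_n(\theta)_{i,j}=\partial_{\theta_j}F_n(\theta)_i=\sum_{k=1}^n \partial_{\theta_j} f_i(Y_{t_{k-1}},Y_{t_k};\theta)$, so $n^{-1}J_n(\theta)_{i,j}$ is an ergodic average of the stationary sequence $\{\partial_{\theta_j}f_i(Y_{t_{k-1}},Y_{t_k};\theta)\}_k$. By Lemma \ref{lemma:ergodicity} the sampled chain $\{Y_{t_i}\}$ is ergodic with stationary density $q$, and under Assumption \ref{assump:misc}(a)--(b) each $\partial_{\theta_j}f_i$ is continuous in $\theta$ and dominated on $B$ by a $\bar Q$-integrable function, so the ergodic theorem gives, for each fixed $\theta\in B$,
\[
n^{-1}J_n(\theta)_{i,j}\to \bar E[\partial_{\theta_j}f_i(Y_{t_0},Y_{t_1};\theta)]\quad\text{a.s.}
\]
At $\theta=\bar\theta$ the right-hand side is exactly $A(\bar\theta)_{i,j}=\bar E[\partial_\theta f(Y_{t_0},Y_{t_1};\bar\theta)]_{i,j}$.

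The second stage is to convert this pointwise statement into the uniform one over $\bar B_{\epsilon_n}(\bar\theta)$, $\epsilon_n\downarrow 0$. Here I would invoke a uniform (Glivenko--Cantelli type) law of large numbers for the class $\{(x,y)\mapsto \partial_{\theta_j}f_i(x,y;\theta):\theta\in B\}$: since this family is continuous in $\theta$ and dominated by a fixed $\bar Q$-integrable envelope (Assumption \ref{assump:misc}(b)), the standard Wald/Rubin argument (see e.g. the references on estimating functions cited in the paper, or \cite{KesslerLindnerSorensen12}) yields
\[
\sup_{\theta\in B}\bigl|n^{-1}J_n(\theta)_{i,j}-\bar E[\partial_{\theta_j}f_i(Y_{t_0},Y_{t_1};\theta)]\bigr|\to 0\quad\text{a.s.}
\]
Combining this with continuity of $\theta\mapsto \bar E[\partial_{\theta_j}f_i(Y_{t_0},Y_{t_1};\theta)]$ at $\bar\theta$ (which follows from dominated convergence, again using Assumption \ref{assump:misc}(b)) and the fact that $\bar B_{\epsilon_n}(\bar\theta)\subseteq B$ for $n$ large, we get
\[
\sup_{\theta\in\bar B_{\epsilon_n}(\bar\theta)}\bigl|n^{-1}J_n(\theta)_{i,j}-A(\bar\theta)_{i,j}\bigr|
\le \sup_{\theta\in B}\bigl|n^{-1}J_n(\theta)_{i,j}-\bar E[\partial_{\theta_j}f_i(\cdot;\theta)]\bigr|
+\sup_{\theta\in\bar B_{\epsilon_n}(\bar\theta)}\bigl|\bar E[\partial_{\theta_j}f_i(\cdot;\theta)]-A(\bar\theta)_{i,j}\bigr|,
\]
and both terms on the right tend to $0$ a.s. as $n\to\infty$, the first by the uniform LLN and the second by continuity together with $\epsilon_n\to 0$.

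The main obstacle is the uniform law of large numbers for $n^{-1}J_n(\theta)$ over a \emph{noncompact} or at least merely open neighborhood $B$, rather than over a fixed compact set: the classical Wald argument is stated for compacts. There are two clean ways around this. One is to observe that we only ever need uniformity over the \emph{compact} balls $\bar B_{\epsilon_0}(\bar\theta)$ for some small fixed $\epsilon_0$ with $\bar B_{\epsilon_0}(\bar\theta)\subseteq B$ (since $\epsilon_n\le\epsilon_0$ eventually), so it suffices to run the compact-set uniform LLN on $\bar B_{\epsilon_0}(\bar\theta)$; the shrinking of $\epsilon_n$ is then handled purely by the continuity term. The other, essentially equivalent, route is a standard bracketing/covering argument: cover $\bar B_{\epsilon_0}(\bar\theta)$ by finitely many small balls, use the pointwise ergodic theorem at the centers plus the modulus-of-continuity control furnished by the integrable envelope and continuity of $\partial_\theta f$, and let the mesh go to zero after $n\to\infty$. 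I would write the proof using the first route, as it is the shortest and exactly matches the structure of the statement. A minor secondary point to be careful about is measurability of the suprema, which is routine here because $\theta\mapsto J_n(\theta)_{i,j}$ is continuous so the sup over $\bar B_{\epsilon_n}(\bar\theta)$ equals a sup over a countable dense subset.
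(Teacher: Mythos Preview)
Your proposal is correct and follows essentially the same route as the paper: the same triangle-inequality decomposition into a uniform-LLN term plus a continuity term, with the uniform LLN applied on a fixed compact ball $\bar B_{\epsilon_0}(\bar\theta)\subseteq B$ (the paper simply takes $\epsilon_0=\epsilon_1$) and the second term handled by continuity of $\theta\mapsto A(\theta)_{i,j}$ together with $\epsilon_n\to 0$. The paper cites Lemma~1.2.3 of \cite{sorensen2012estimating} for the compact-set uniform LLN, which is exactly the Wald-type result you invoke.
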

\begin{proof}
	Note that
	\begin{align}
		& \sup_{\theta \in \bar B_{\epsilon_n} (\bar{ \theta})}|n^{-1}J_n(\theta)_{i,j}-A(\bar{ \theta})_{i,j}|\\
		&\le \sup_{\theta \in \bar B_{\epsilon_n} (\bar{ \theta})}|n^{-1}J_n(\theta)_{i,j}-A({ \theta})_{i,j}|+\sup_{\theta \in \bar B_{\epsilon_n} (\bar{ \theta})}|A(\theta)_{i,j}-A(\bar{ \theta})_{i,j}|\\
		&\le \sup_{\theta \in \bar B_{\epsilon_1} (\bar{ \theta})}|n^{-1}J_n(\theta)_{i,j}-A({ \theta})_{i,j}|+\sup_{\theta \in \bar B_{\epsilon_n} (\bar{ \theta})}|A(\theta)_{i,j}-A(\bar{ \theta})_{i,j}|
	\end{align}
	Since $\bar B_{\epsilon_1} (\bar \theta)$ is a compact set, the first term converges to $0$ almost surely by Lemma 1.2.3 in \cite{sorensen2012estimating}. As $\bar B_{\epsilon_n}(\bar \theta) \to \{\bar \theta \}$, the second term also converges to 0.
\end{proof}

\begin{lemma} \label{FhLemma}
Under Assumption \ref{assump:diff}, \ref{assump:misc}, \ref{assum: globalCons} (b), suppose $\lim_{n\to \infty}h_n=0$. Then, for any compact subset $S$ of $\Theta$, we have
\begin{equation}
\sup_{\theta \in S}||n^{-1}F^h_n(\theta)-\bar E[ f(Y_{t_0},Y_{t_1}; \theta)]|| \overset{p}{\to} 0,\label{eq:conv-Fhn}\\
\end{equation}
and for any compact subset $S$ of $B$ (defined in Assumption \ref{assump:misc}),
\begin{equation}
\sup_{\theta \in S}|n^{-1}J_n^h(\theta)_{i,j}-A({ \theta})_{i,j}| \overset{p}{\to} 0,\label{eq:conv-Jhn}
\end{equation}
for each $ij$-th entry of $J_n^h(\theta)$. Here, $A(\theta)=\bar E[\partial_\theta f(Y_{t_0},Y_{t_1};\theta)]$.
\end{lemma}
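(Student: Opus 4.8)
The plan is to reduce both claims to the corresponding statements for the \emph{exact} estimating function $F_n$ (which are already available from the theory invoked in Theorem \ref{CAN1} and from Lemma \ref{JAlemma}), and then absorb the discretization error using the hypotheses \eqref{eq:error_estfunc1} and \eqref{eq:error_estfucderiv}. For the first claim, I would write the triangle inequality
\begin{equation}
\sup_{\theta\in S}\|n^{-1}F_n^h(\theta)-\bar E[f(Y_{t_0},Y_{t_1};\theta)]\|
\le \sup_{\theta\in S}\|n^{-1}F_n^h(\theta)-n^{-1}F_n(\theta)\|
+\sup_{\theta\in S}\|n^{-1}F_n(\theta)-\bar E[f(Y_{t_0},Y_{t_1};\theta)]\|.
\end{equation}
The second term on the right tends to $0$ in probability (uniformly on the compact set $S$) by the uniform law of large numbers for ergodic processes — this is exactly the ingredient behind Theorem \ref{CAN1}, using ergodicity of the sampled chain (Lemma \ref{lemma:ergodicity}), Assumption \ref{assump:misc}(a)--(c), and the local dominated integrability in Assumption \ref{assum: globalCons}(b); one can quote Lemma 1.2.3 of \cite{sorensen2012estimating} applied on $S$. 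For the first term, \eqref{eq:error_estfunc1} gives the pointwise bound $\|n^{-1}F_n^h(\theta)-n^{-1}F_n(\theta)\|\le C(\theta)h_n^p$; since $C(\theta)$ is continuous and $S$ is compact, $\sup_{\theta\in S}C(\theta)<\infty$, so this term is $O(h_n^p)\to 0$ deterministically as $h_n\to 0$. Hence the left side $\overset{p}{\to}0$.

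For the second claim I would argue analogously, splitting
\begin{equation}
\sup_{\theta\in S}|n^{-1}J_n^h(\theta)_{i,j}-A(\theta)_{i,j}|
\le \sup_{\theta\in S}|n^{-1}J_n^h(\theta)_{i,j}-n^{-1}J_n(\theta)_{i,j}|
+\sup_{\theta\in S}|n^{-1}J_n(\theta)_{i,j}-A(\theta)_{i,j}|.
\end{equation}
For indices $i\le p_1$ the first term is identically zero because $F_n^h$ and $F_n$ agree in those coordinates; for $p_1<i\le p_1+p_2$ it is bounded by $C_{ij}(\theta)h_n^q$, hence by $\sup_{\theta\in S}C_{ij}(\theta)\,h_n^q\to 0$ since $C_{ij}$ is continuous and $S$ is compact. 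The second term converges to $0$ (in probability, uniformly on the compact $S\subseteq B$) again by the uniform law of large numbers of \cite{sorensen2012estimating}, Lemma 1.2.3 — note this is precisely the mechanism used in the proof of Lemma \ref{JAlemma}, here stated with $S$ in place of a shrinking ball and with convergence in probability rather than almost surely, which is all that is claimed.

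The only subtlety — and the main point to be careful about — is the mode of convergence and the uniformity. The error bounds \eqref{eq:error_estfunc1} and \eqref{eq:error_estfucderiv} hold almost surely but with \emph{random} constants $C(\theta)$, $C_{ij}(\theta)$ that are merely continuous in $\theta$; I would make sure these can be taken as finite (indeed bounded on $S$) almost surely, e.g. by noting they are deterministic continuous functions of $\theta$ or, if random, by invoking that $\sup_S$ of a continuous function is finite and applying a routine ``$\epsilon$-of-room'' argument to combine the deterministic $O(h_n^p)$ vanishing with the in-probability convergence of the LLN term. Once that is settled the two displays above close the proof, so I do not expect any genuine obstacle — the lemma is essentially a perturbation of the already-established convergences by the quantified numerical error, and the compactness of $S$ together with $h_n\to 0$ does all the work.
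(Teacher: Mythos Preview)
Your proposal is correct and follows essentially the same route as the paper: the paper also splits via the triangle inequality, bounds the discretization term by $\sup_{\theta\in S}C(\theta)\,h_n^p$ (resp.\ $\sup_{\theta\in S}C_{ij}(\theta)\,h_n^q$) using continuity on the compact $S$, and handles the remaining term by Lemma~1.2.3 of \cite{sorensen2012estimating}, invoking the local dominated integrability of Assumption~\ref{assum: globalCons}(b) together with a finite-cover argument to get the required domination on $S$. Your extra caution about whether $C(\theta)$ is random is unnecessary here---the paper treats it as a deterministic continuous function---but otherwise the arguments coincide.
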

\begin{proof}
For any compact subset $S$ of $\Theta$, there exists a finite number $K$ such that $|C(\theta)|\le K$ for $\theta \in S$ since $C(\theta)$ is continuous. Then,
\begin{align}
& \sup_{\theta \in S}||n^{-1}F^h_n(\theta)-\bar E[ f(Y_{t_0},Y_{t_1}; \theta)]|| \\
& \le \sup_{\theta \in S}||n^{-1}F_n(\theta)-\bar E[ f(Y_{t_0},Y_{t_1}; \theta)]|| +\sup_{\theta \in S}||n^{-1}F^h_n(\theta)- n^{-1}F_n(\theta)|| \\
& \le \sup_{\theta \in S}||n^{-1}F_n(\theta)-\bar E[ f(Y_{t_0},Y_{t_1}; \theta)]|| +\sup_{\theta \in S}||n^{-1}C(\theta)||nh_n^p \\
& \le \sup_{\theta \in S}||n^{-1}F_n(\theta)-\bar E[ f(Y_{t_0},Y_{t_1}; \theta)]|| +Kh_n^p.
\end{align}
By the finite covering property of a compact set, the dominated integrability condition of Lemma 1.2.3 in \cite{sorensen2012estimating} follows from the local dominated integrability of $f$ (Assumption \ref{assum: globalCons} (b)) and the continuity of $f$ (Assumption \ref{assump:misc} (a)). Applying this lemma shows the convergence of the first term. Obviously, the second term converges to $0$. The second claim can be proved similarly.
\end{proof}
\color{black}

\begin{thm} \label{CAN2}
	Under Assumption \ref{assump:diff} and \ref{assump:misc}, and \eqref{eq:error_estfunc}, \eqref{eq:error_estfucderiv}, suppose that $F_n^h(\theta)$ is continuously differentiable in a neighbourhood $B'$ of $\bar \theta$ in $\Theta$ and  $\lim_{n\to \infty}h_n =0$. Then, an estimator $\hat{\theta}_n^h$ exists with probability tending to 1 as $n\to \infty$ and
	\[\hat \theta_{n,2}^h \overset{p}{\to} \bar \theta_2 . \]
	on the set where they exist. Under Assumption \ref{assum: globalCons}, the estimator $\hat \theta_{n,2}^h$ is the unique $F^h_n$-estimator on any bounded subset of $\Theta$ containing $\bar \theta$ with probability tending to 1 as $n \to \infty$. Moreover, for $n$ large enough, $\|\hat{\theta}_{n,2}^h-\hat{\theta}_{n,2}\|=O(h_n^p)$ almost surely on the set where they exist.
\end{thm}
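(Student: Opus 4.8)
\emph{Proof strategy.} The plan is to re-run the argument behind Theorem \ref{CAN1}, replacing the exact uniform law of large numbers for $n^{-1}F_n$ and $n^{-1}J_n$ by its approximate counterpart from Lemma \ref{FhLemma}, and then to add a short perturbation estimate that produces the rate. Two structural observations drive everything. First, $\hat\theta_{n,1}$ involves no discretization, so $\hat\theta^h_{n,1}=\hat\theta_{n,1}$ and only the $\theta_2$-block is perturbed by the numerical scheme. Second, by \eqref{eq:error_estfunc1} and \eqref{eq:error_estfucderiv} the approximation moves $F_n$ and $J_n$ by amounts of order $nh_n^p$ and $nh_n^q$; since $h_n\to0$ these perturbations vanish after division by $n$, so $n^{-1}F^h_n(\theta)$ and $n^{-1}J^h_n(\theta)$ have the same probabilistic limits, $\bar E[f(Y_{t_0},Y_{t_1};\theta)]$ and $A(\theta)$, as the exact quantities.

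\emph{Existence, consistency and uniqueness.} Near $\bar\theta$, Assumption \ref{assump:misc}(b) gives the local uniform convergence of $n^{-1}F_n$ and $n^{-1}J_n$ exactly as in the proof of Theorem \ref{CAN1}, and \eqref{eq:error_estfunc1}, \eqref{eq:error_estfucderiv} carry it over to $n^{-1}F^h_n$ and $n^{-1}J^h_n$. Together with the continuity of $\theta\mapsto\bar E[f(\cdot;\theta)]$ (Assumption \ref{assump:misc}(a)), the fact that $\bar\theta$ is a zero of this limit, the non-singularity of $A(\bar\theta)$ (Assumption \ref{assump:misc}(d)), and the assumed continuous differentiability of $F^h_n$ on $B'$, Theorem 1.2.2 of \cite{sorensen2012estimating} — the same result used for Theorem \ref{CAN1} — yields an $F^h_n$-estimator $\hat\theta^h_n$ existing with probability tending to one and satisfying $\hat\theta^h_n\overset{p}{\to}\bar\theta$; in particular $\hat\theta^h_{n,2}\overset{p}{\to}\bar\theta_2$. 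Adding Assumption \ref{assum: globalCons}, Lemma \ref{FhLemma} upgrades the convergence to all compact subsets of $\Theta$, and, combined with $\bar E[f(\cdot;\theta)]\ne0$ for $\theta\ne\bar\theta$, the same theorem gives that $\hat\theta^h_{n,2}$ is the unique $F^h_n$-estimator on any bounded subset of $\Theta$ containing $\bar\theta$, with probability tending to one.

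\emph{The rate $O(h_n^p)$.} This is the genuinely new ingredient. I would work on the event, of probability tending to one, on which $\hat\theta_{n,2}$ and $\hat\theta^h_{n,2}$ both exist and lie in $B'$; recall both estimating equations use the \emph{same} $\hat\theta_{n,1}$. From $F^h_{n,2}(\hat\theta_{n,1},\hat\theta^h_{n,2})=0$ and $F_{n,2}(\hat\theta_{n,1},\hat\theta_{n,2})=0$, a row-wise mean value theorem applied to $\theta_2\mapsto F_{n,2}(\hat\theta_{n,1},\theta_2)$ gives
\[
G_n\big(\hat\theta^h_{n,2}-\hat\theta_{n,2}\big)=-\big(F^h_{n,2}-F_{n,2}\big)(\hat\theta_{n,1},\hat\theta^h_{n,2}),
\]
where the $i$-th row of $G_n$ is the $i$-th row of $\partial_{\theta_2}F_{n,2}$ evaluated at a point $(\hat\theta_{n,1},\tilde\theta^{(i)}_2)$ with $\tilde\theta^{(i)}_2$ on the segment joining $\hat\theta_{n,2}$ and $\hat\theta^h_{n,2}$. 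Since $\hat\theta_{n,1}$ and $\tilde\theta^{(i)}_2$ converge to $\bar\theta$, Lemma \ref{JAlemma} (applied to the $\theta_2$-block of $J_n$) yields $n^{-1}G_n\to A_2(\bar\theta)$, which is non-singular by Assumption \ref{assump:misc}(d); hence for $n$ large $G_n$ is invertible with $\|(n^{-1}G_n)^{-1}\|\le 2\|A_2(\bar\theta)^{-1}\|$. By \eqref{eq:error_estfunc} the right-hand side has norm at most $C(\hat\theta_{n,1},\hat\theta^h_{n,2})\,nh_n^p$, which is eventually $\le 2C(\bar\theta)\,nh_n^p$ by continuity of $C$. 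Therefore
\[
\|\hat\theta^h_{n,2}-\hat\theta_{n,2}\|\le\|(n^{-1}G_n)^{-1}\|\cdot n^{-1}\big\|\big(F^h_{n,2}-F_{n,2}\big)(\hat\theta_{n,1},\hat\theta^h_{n,2})\big\|\le 4\,\|A_2(\bar\theta)^{-1}\|\,C(\bar\theta)\,h_n^p=O(h_n^p),
\]
the factor $n$ from the error bound cancelling the $n^{-1}$ coming from $G_n^{-1}$.

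\emph{Main obstacle.} The argument is elementary; the care is in the bookkeeping of the last step. One must ensure the random arguments $\hat\theta_{n,1}$, $\hat\theta^h_{n,2}$ and the mean-value points $\tilde\theta^{(i)}_2$ eventually lie in the shrinking neighbourhoods of $\bar\theta$ on which Lemma \ref{JAlemma} controls $n^{-1}\partial_{\theta_2}F_{n,2}$ uniformly, so that $n^{-1}G_n$ is close to $A_2(\bar\theta)$ and $G_n^{-1}$ has a uniformly bounded norm, and that the continuous coefficient $C(\theta)$ of \eqref{eq:error_estfunc} is bounded over those neighbourhoods. Reconciling the almost-sure bound \eqref{eq:error_estfunc} and the almost-sure statement of Lemma \ref{JAlemma} with the merely in-probability consistency of $\hat\theta_{n,1}$, $\hat\theta_{n,2}$, $\hat\theta^h_{n,2}$ — i.e. restricting throughout to the event, of probability tending to one, on which these estimators exist and are close to $\bar\theta$ — is the only genuinely fiddly point, and it is handled exactly as the corresponding passages in the proof of Theorem \ref{CAN1}.
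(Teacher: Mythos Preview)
Your proposal is correct and follows essentially the same route as the paper. The only cosmetic difference is that the paper invokes Theorems 1.10.2 and 1.10.3 of \cite{sorensen2012estimating} for existence/consistency and uniqueness, whereas you appeal to Theorem 1.2.2 (the one used for Theorem \ref{CAN1}); the inputs checked are the same---uniform convergence of $n^{-1}F^h_n$ and $n^{-1}J^h_n$ from Lemma \ref{FhLemma}, non-singularity of $A(\bar\theta)$, and the identifiability condition $\bar E[f(\cdot;\theta)]\neq0$ for $\theta\neq\bar\theta$---so either citation works. Your rate argument via the row-wise mean value theorem and Lemma \ref{JAlemma}, restricting to the event where both estimators lie in a shrinking ball around $\bar\theta$, is exactly what the paper does (it names this event $E_n=\{\hat\theta_n,\hat\theta_n^h\in\bar B_{\epsilon_n}(\bar\theta)\}$), and your ``main obstacle'' paragraph identifies precisely the bookkeeping the paper carries out.
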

\begin{proof}
	We check the three conditions required for Theorem 1.10.2 in \cite{sorensen2012estimating}. Condition (i) follows from \eqref{eq:conv-Fhn}. Let $M$ be a compact subset of $B\cap B'$. \eqref{eq:conv-Jhn} implies condition (ii). Condition (iii) is equivalent to Assumption \ref{assump:misc} (d). Thus, applying this theorem shows the existence of a consistent estimator $\hat \theta_{n,2}^h$.

	Now we prove the second statement. Let $\bar{B}_{\epsilon} (\bar \theta)$ denote the closed ball with radius $\epsilon$ centered at $\bar \theta$. By Assumption \ref{assum: globalCons} (a), for any bounded subset $S$ of $\Theta$ containing $\bar{\theta}$, we have
	\begin{equation}
		\bar{P}\left(\inf_{S \backslash \bar{B}_{\epsilon}(\bar \theta_2) } \|\bar E[ f(Y_{t_0},Y_{t_1}; \theta)]\| >0\right)=1
	\end{equation}
	for all $\epsilon>0$. This, together with Lemma \ref{FhLemma}, implies that the conditions of Theorem 1.10.3 in \cite{sorensen2012estimating} are satisfied. Thus, for any sequence $\tilde \theta_{n,2}^h$ of $F_n^h$-estimator,
	\begin{equation} \label{eq: globalCon}
		\bar{P}(\tilde \theta_{n,2}^h \in S \backslash \bar{B}_\epsilon(\bar \theta_2) ) \to 0
	\end{equation}
	as $n \to \infty$ for all $\epsilon>0$. Let $\theta'_{n,2}$ be an $F_n^h$-estimator. Let $\theta''_{n,2}=\theta'_{n,2} 1( \theta'_{n,2} \in S)+\hat \theta_{n,2}^h1( \theta'_{n,2} \notin S)$ where $\hat \theta_{n,2}^h$ is a known consistent $F_n^h$-estimator. Thus,
	$ \theta''_{n,2}$ is a consistent estimator by \eqref{eq: globalCon}. Then, by Theorem 1.10.2 in \cite{sorensen2012estimating}, $P(\theta''_{n,2} \ne \hat \theta_{n,2}^h) \to 0$ as $n\to \infty$ which means $\hat \theta_{n,2}^h$ is eventually unique on $S$.
	
	For the last part, due to the consistency of $\hat \theta_{n}$ and $\hat \theta_{n}^h$, there exists a sequence
	$\epsilon_n>0$ satisfying $\epsilon_n\to 0$ and $\bar P(E_n)\to 1$, where
	\begin{equation} \label{eq: Event}
		E_n=\{\hat \theta_{n}, \hat \theta_{n}^h \in \bar B_{\epsilon_n}(\bar \theta) \} .
	\end{equation}
	Applying the mean value theorem to each element of $F_{n,2}(\hat\theta_{n,1},\hat \theta_{n,2}^h)$ (the $i$-th element is denoted by $F_{n,2}^i(\hat \theta_{n,1}, \theta_2)$) on $E_n$, we have
	\begin{align}
		F_{n,2}(\hat \theta_{n,1}, \hat \theta_{n,2}^h) - F_{n,2}^h(\hat \theta_{n,1}, \hat \theta_{n,2}^h) &= F_{n,2}^{}(\hat \theta_{n,1}, \hat \theta_{n,2}^h) \notag \\
		& = F_{n,2}(\hat \theta _{n,1},\hat \theta _{n,2}) + J_{n,2}(\beta^1_{n},\cdots ,\beta^{p_2}_n)(\hat \theta _{n,2}^h - \hat \theta _{n,2})\\
		& =J_{n,2}(\beta^1_{n},\cdots ,\beta^{p_2}_n)(\hat \theta _{n,2}^h - \hat \theta _{n,2}),
	\end{align}
	where $\beta^i_n=a_i(\hat\theta_{n,1}',(\hat{\theta}_{n,2}^{h})')'+(1-a_i)(\hat\theta_{n,1}',\hat\theta_{n,2}')'$ for some $0<a_i<1$, and
	for $i=1,\cdots, p_2$,
	
	\[ {J_{n,2}}(\beta ^1,\cdots,\beta ^{{p_2}}) = \left[ {\begin{array}{*{20}{c}}
		\partial _{\theta_2}{F_{n,2}^1}(\beta^1)\\
		{\cdots}\\
		\partial _{\theta_2}{F_{n,2}^{p_2} }(\beta^{p_2})
		\end{array}} \right].\]
	Thus
	\begin{align}
		\|\hat \theta _{n,2}^h - \hat \theta _{n,2}\| &= \|{\left( J_{n,2}(\beta^1_n,\cdots ,\beta^{p_2}_n)\right)^{ - 1}} \left( F_{n,2}(\hat \theta_{n,1}, \hat \theta_{n,2}^h) - F_{n,2}^h(\hat \theta_{n,1}, \hat \theta_{n,2}^h)  \right)\| \\
		&= \|{\left( J_{n,2}(\beta^1_n,\cdots ,\beta^{p_2}_n)/n \right)^{ - 1}} \left(  F_{n,2}(\hat \theta_{n,1}, \hat \theta_{n,2}^h) - F_{n,2}^h(\hat \theta_{n,1}, \hat \theta_{n,2}^h) \right) /n \| \\
		&\le \|{\left( J_{n,2}(\beta^1_n,\cdots ,\beta^{p_2}_n)/n \right)^{ - 1}}\| C(\hat \theta_{n,1}, \hat \theta_{n,2}^h )h_n^p\ \text{almost surely.}\label{eq: difference}
	\end{align}
	$C(\hat \theta_{n,1}, \hat \theta_{n,2}^h )$ is bounded for all $n$ due to the continuity of $C(\theta)$ is continuous and the boundedness of $E_n$.  Since each $\beta^i_n\in \bar B_{\epsilon_n}(\bar \theta)$, by Lemma \ref{JAlemma}, $J_{n,2}(\beta^1_n,\cdots,\beta^{p_2}_n )/n\to A_2(\bar \theta)$ almost surely. So the norm in \eqref{eq: difference} is bounded for $n$ almost surely. Thus, $\|\hat \theta _{n,2}^h - \hat \theta _{n,2}\|=O(h_n^p)$ almost surely on the set where they exist.
\end{proof}

\begin{thm}
	Under the assumptions made in Theorem \ref{CAN2}, further assume that $\lim\limits_{n\to \infty}\sqrt{n}h_n^p =0$. We have
	\[\sqrt{n}(\hat{\theta}_n ^h- \bar{ \theta}) \to  \mathcal {N}(0,\mathcal{V}),  \]
	where $\mathcal{V}$ is defined in Theorem \ref{CAN1}.
\end{thm}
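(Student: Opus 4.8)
The plan is to deduce this weak-convergence statement from the asymptotic normality of the ``exact'' estimator $\hat\theta_n$ already obtained in Theorem \ref{CAN1}, together with the rate bound $\|\hat\theta_{n,2}^h-\hat\theta_{n,2}\|=O(h_n^p)$ furnished by Theorem \ref{CAN2}, via a converging-together (Slutsky) argument. First I would record that the numerical approximation does not affect the first step of the procedure, i.e. $\hat\theta_{n,1}^h=\hat\theta_{n,1}$, because $F_{n,1}$ involves no eigenpairs and hence no discretization. Consequently, on the event $E_n$ of \eqref{eq: Event}—on which $\hat\theta_n$ and $\hat\theta_n^h$ both exist and lie in $\bar B_{\epsilon_n}(\bar\theta)$, an event of probability tending to $1$—the last conclusion of Theorem \ref{CAN2} gives
\[
\|\hat\theta_n^h-\hat\theta_n\|=\|\hat\theta_{n,2}^h-\hat\theta_{n,2}\|=O(h_n^p)\qquad\text{almost surely.}
\]

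Second, I would multiply through by $\sqrt n$. Inspecting the proof of Theorem \ref{CAN2}, the constant implicit in the $O(h_n^p)$ bound is $\|(J_{n,2}(\beta^1_n,\dots,\beta^{p_2}_n)/n)^{-1}\|\,C(\hat\theta_{n,1},\hat\theta_{n,2}^h)$, which on $E_n$ is bounded in $n$ almost surely: the first factor converges to $\|A_2(\bar\theta)^{-1}\|$ by Lemma \ref{JAlemma} (each $\beta^i_n\in\bar B_{\epsilon_n}(\bar\theta)$), and the second is a continuous function evaluated at points of a fixed bounded set. Hence $\sqrt n\,\|\hat\theta_n^h-\hat\theta_n\|\le(\text{bounded})\cdot\sqrt n\,h_n^p$, and the hypothesis $\sqrt n\,h_n^p\to0$ forces $\sqrt n(\hat\theta_n^h-\hat\theta_n)\to0$ almost surely on $E_n$, and therefore in probability since $\bar P(E_n)\to1$.

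Finally I would decompose
\[
\sqrt n(\hat\theta_n^h-\bar\theta)=\sqrt n(\hat\theta_n-\bar\theta)+\sqrt n(\hat\theta_n^h-\hat\theta_n),
\]
where the first term converges in distribution to $\mathcal N(0,\mathcal V)$ by Theorem \ref{CAN1} and the second term converges to $0$ in probability by the previous paragraph; Slutsky's theorem then yields $\sqrt n(\hat\theta_n^h-\bar\theta)\to\mathcal N(0,\mathcal V)$.

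I expect the only genuinely delicate step—rather than the routine Slutsky application—to be the bookkeeping of the ``on the set where they exist'' qualifier: one must check that the events on which $\hat\theta_n$ and $\hat\theta_n^h$ both exist, lie in the required neighbourhood, and satisfy the rate bound can all be taken with probability tending to $1$ (this is precisely the role of $E_n$), and then that modifying the estimators on a set of vanishing probability does not alter the limiting distribution. Both facts are standard, but they should be stated carefully so that the conclusion is an unconditional statement and not merely one ``on the set where the estimators exist.''
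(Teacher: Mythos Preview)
Your proposal is correct and follows essentially the same route as the paper: show $\sqrt{n}(\hat\theta_{n,2}^h-\hat\theta_{n,2})\to 0$ in probability by multiplying the bound \eqref{eq: difference} from the proof of Theorem \ref{CAN2} by $\sqrt{n}$, invoke Lemma \ref{JAlemma} on $E_n$ to control the Jacobian factor, use $\sqrt{n}h_n^p\to 0$, and conclude via Slutsky. Your write-up is in fact slightly more explicit than the paper's (you spell out $\hat\theta_{n,1}^h=\hat\theta_{n,1}$ and the Slutsky step, which the paper leaves implicit).
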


\begin{proof}
	We want to show that $\sqrt{n} (\hat{\theta}_{n,2}^h-\bar{ \theta}_2) -\sqrt{n}(\hat{\theta}_{n,2}-\bar{ \theta}_2 ) \overset{p}{\to}  0$ which implies the claim. From \eqref{eq: difference}, on $E_n$ defined by \eqref{eq: Event},
	\begin{align}
		&\|\sqrt{n} (\hat{\theta}_{n,2}^h-\bar{ \theta}_2) -\sqrt{n}(\hat{\theta}_{n,2}-\bar{ \theta}_2 )\|=\|\sqrt{n}(\hat{\theta}_{n,2}^h-\hat{\theta}_{n,2} )\|\\
		&\le\sqrt{n}\|{\left( J_{n,2}(\beta^1_n,\cdots ,\beta^{p_2}_n)/n \right)^{ - 1}}\| C( \hat \theta_{n,1}, \hat \theta_{n,2}^h)h_n^p\ \text{almost surely.}
		%& \to \|(A_2(\bar{ \theta}))^{-1} \|(\sqrt{n})^{-1}C( \hat \theta_{n,1}, \hat \theta_{n,2}^h )h_n^2\to 0
	\end{align}
	Applying Lemma \ref{JAlemma} and that $\bar P(E_n)\to 1$, we have $\sqrt{n} (\hat{\theta}_{n,2}^h-\bar{ \theta}_2) -\sqrt{n}(\hat{\theta}_{n,2}-\bar{ \theta}_2 ) \overset{p}{\to} 0$.
\end{proof}

\section{Numerical Examples}\label{sec:num}
In this section, we consider the IG-SubOU process $Y$ discussed in Example \ref{eg:IG-SubOU} with $\gamma=0$ (the subordinator has no drift and hence $Y$ is a pure jump process), which we will use to evaluate several important questions numerically in Section \ref{sec:comp-weight} to \ref{sec:DataFreq}. This is a non-trivial process for which many quantities can be computed analytically. As an application of our method, we will construct a subordinate diffusion model to fit VIX data in Section \ref{sec:VIX}.

For the IG-SubOU process, let $\mu$ and $v$ be the mean rate and variance rate of the inverse Gaussian subordinator, i.e., $\mu=E[T_1]$ and $v=\text{Var}[T_1]$. We use $\mu$ and $v$ instead of $C$ and $\eta$ to reparameterize the L\'{e}vy density of $T$, because $\mu,v$ are easy to interpret. Under the new parametrization,  $\phi(\lambda)=\frac{\mu^2}{\nu}\left[\sqrt{1+2\frac{\nu}{\mu}\lambda}-1\right]$. The parameters of the pure jump IG-SubOU process are $(\kappa,\vartheta,\sigma,\mu,v)$. The eigenpairs $\{(\varphi_m,\lambda _m):m=0,1,\cdots,\}$ of the OU process are given in \eqref{eq:OU-eigen}, which have explicit expressions. The computation of $\varphi_m(x)$ is particularly simple and can be done via the following recursion
\begin{equation}
\varphi_0(x)=1,\ \varphi_1(x)=\frac{\sqrt{2\kappa}}{\sigma}(x-\vartheta),\ \varphi_m(x) = \sqrt {\frac{2}{m}}\frac{\sqrt{\kappa}}{\sigma}(x-\vartheta)\varphi_{m- 1}(x) - \sqrt{\frac{m-1}{m}} \varphi_{m - 2}(x),\ m\ge 2.
\end{equation}
The true parameter values for $Y$ are given by $\bar{\kappa}=0.04$, $\bar{\vartheta}=0$, $\bar{\sigma}=0.06$, $\bar{\mu}=1$ and $\bar{v}=0.5$ (the time unit is day). To estimate the parameters, we generate 2000 daily data by simulation (i.e., $\Delta=1$ day).

From the scaling invariance pointed out in Corollary \ref{cor:TSS}, we fix $\sigma$ and only estimate $(\kappa,\vartheta,\mu,v)$.
To estimate the diffusion parameter $(\kappa,\vartheta)$, we use the test function \eqref{eq:score-vec} in \eqref{eq: diffCon1} (the stationary density of the OU process is given in \eqref{eq:OUStationary}), and obtain two moment conditions
\begin{equation}
E_{\gQ}[(Y_t - \vartheta)^2 - \sigma^2/(2\kappa)]=0,\ E_{\gQ}[Y_t - \vartheta ]=0.
\end{equation}
Using these moment conditions to construct estimating functions, we obtain the estimator for $(\kappa,\vartheta)$ as follows (note that the value of $\sigma$ is fixed in advance)
\begin{equation}
\hat{\vartheta}=\frac{1}{n}\sum_{i=1}^n Y_{t_i},\ \hat{\kappa}=\frac{\sigma^2n}{2\sum_{i=1}^n(Y_{t_i}-\hat{\vartheta})^2}.
\end{equation}

\subsection{Comparison of the KS Weight and the Optimal Weight}\label{sec:comp-weight}
We use estimating function based on eigenfunctions to estimate $(\mu,v)$. To get the KS weight, we solve \eqref{eq: solveW}. For the IG-SubOU process, $P$ and $Q$ can be obtained in closed-form. We have
\begin{align}
& {Q_{i,1}}(y;\theta) = e^{ - \phi (\kappa i;\theta)\Delta}{\varphi_i}(y;\theta_1)\Delta \left[ {\frac{{2\mu }}{v } - \frac{{2\frac{\mu }{v } + 3\kappa i}}{{\sqrt {1 + 2\frac{v }{\mu }\kappa i} }}} \right], \\
& {Q_{i,2}}(y;\theta) = {e^{ - \phi (\kappa i;\theta)\Delta}}\varphi (y;\theta_1)\Delta \left[ { - {{\left( {\frac{\mu }{v }} \right)}^2} + \frac{{{{\left( {\frac{\mu }{v }} \right)}^2} + \frac{\mu }{v }\kappa i}}{{\sqrt {1 + 2\frac{v }{\mu }\kappa i} }}} \right].\label{eq: QofOU}
\end{align}
Using
\[{\varphi_i}(x){\varphi_j}(x) = \sum\nolimits_{r = 0}^{\min (i,j)} {\sqrt {\left( {\begin{array}{*{20}{c}}
			{i + j - 2r}\\
			{i - r}
			\end{array}} \right)\left( {\begin{array}{*{20}{c}}
			i\\
			r
			\end{array}} \right)\left( {\begin{array}{*{20}{c}}
			j\\
			r
			\end{array}} \right)} } {\varphi_{i + j - 2r}}(x),\]
we obtain that
\begin{align}
	{P_{i,j}}(y;\theta) &=\sum\nolimits_{r = 0}^{\min (i,j)} {\sqrt {\left( {\begin{array}{*{20}{c}}
					{i + j - 2r}\\
					{i - r}
				\end{array}} \right)\left( {\begin{array}{*{20}{c}}
				i\\
				r
			\end{array}} \right)\left( {\begin{array}{*{20}{c}}
			j\\
			r
		\end{array}} \right)} } {e^{ - \phi(\kappa(i + j - 2r);\theta)\Delta}}{\varphi_{i + j - 2r}}(y;\theta) \\
&- e^{ - (\phi (\kappa i;\theta)\Delta + \phi(\kappa j;\theta))\Delta }{\varphi_i}(y;\theta){\varphi_j}(y;\theta). \label{eq: PofOU}
\end{align}

To calculate the optimal weight, we need to solve the linear system in Proposition \ref{prop:optW} to get $C_2$ and $C_3$. We calculate $Q_1$ to $Q_5$ by Monte Carlo simulation with 20,000 replications. Note that the inner expectations in the expressions for $Q_1$ to $Q_5$, $E[V(Y_{t_0},Y_{t_1})V'(Y_{t_0},Y_{t_1})|Y_{t_0}=y]$, $E[\partial_{\theta_2} V(Y_{t_0},Y_{t_1})|Y_{t_0}=y]$, $E[V(Y_{t_0},Y_{t_1})\tilde{f}_1'|Y_{t_0}=y]$ and $E[\partial_{\theta_1} V(Y_{t_0},Y_{t_1})|Y_{t_0}=y]$ can all be computed in closed-form for the SubOU process (we do not show the formulas here to save space).

We next discuss how to calculate the standard error for the subordinator parameters $\mu$ and $v$. If the KS weight is used, the covariance matrix for the estimator $\hat\theta_{n,2}$ is approximately equal to (see Definition \ref{defOpt})
\begin{equation}
D_{2,2}^{ - 1}\left[ {\begin{array}{*{20}{c}}
	{ - {D_{2,1}}D_{1,1}^{ - 1}}&I
	\end{array}} \right]\left[ {\begin{array}{*{20}{c}}
	{{S_{1,1}}}&{{S_{1,2}}}\\
	{{S_{2,1}}}&{{S_{2,2}}}
	\end{array}} \right]\left[ {\begin{array}{*{20}{c}}
	{ - {{({D_{1,1}'})}^{ - 1}}{D_{2,1}'}}\\
	I
	\end{array}} \right]{({D_{2,2}'})^{ - 1}}.
\end{equation}
For the SubOU process, the formula for each matrix above except $S_{1,2}$ is shown on page 9. The expression inside each expectation is analytically known. The expectations are computed by Monte Carlo simulation with 200,000 replications. $S_{1,2}=E\left[\tilde{f}_1\left(Y_{t_0}, Y_{t_1}\right)V'({Y_{t_0}},{Y_{t_1}})W({Y_{t_0}})\right]$, where $\tilde{f}_1\left(Y_{t_0}, Y_{t_1}\right)$ is defined in \eqref{eq:f1-tilde}. For the SubOU process, each term inside the expectation for $S_{1,2}$ is known and the expectation is again computed by Monte Carlo simulation with 200,000 replications.

If the optimal weight is used, the covariance matrix for the estimator $\hat\theta_{n,2}$ is approximately equal to (see Definition \ref{defOpt})
\begin{align}
 &{(D_{2,2}^*)^{ - 1}}\left[ {\begin{array}{*{20}{c}}
	{ - D_{2,1}^*D_{1,1}^{ - 1}}&I
	\end{array}} \right]\left[ {\begin{array}{*{20}{c}}
	{{S_{1,1}}}&{S_{1,2}^*}\\
	{S_{2,1}^*}&{S_{2,2}^*}
	\end{array}} \right]\left[ {\begin{array}{*{20}{c}}
	{ - {{({D_{1,1}'})}^{ - 1}}(D_{2,1}^*)'}\\
	I
	\end{array}} \right]{({D_{2,2}^*}')^{ - 1}}\\
&={({D_{2,2}^*}')^{ - 1}}=\left(nE[W^*(Y_{t_0};\theta)'\partial_{\theta_2} V(Y_{t_0},Y_{t_1};\theta)]\right)^{-1}\\
&=\frac{1}{n}\Big(E\left[ {{{\left( {\partial_{\theta_2} V({Y_{t_0}},{Y_{t_1}})} \right)}^\prime }{{\left( {E[V({Y_{t_0}},{Y_{t_1}})V'({Y_{t_0}},{Y_{t_1}})|{Y_{t_0}}]} \right)}^{ - 1}}E[\left( {\partial_{\theta_2} V({Y_{t_0}},{Y_{t_1}})} \right)|{Y_{t_0}}]} \right]\\
&+C_2'Q_1+C_3'Q4 \Big).\quad (\text{by Proposition \ref{prop:optimality} and \ref{prop:optW}})
\end{align}
For the SubOU process, the above inner expectations can be computed in closed-form and the outer expectation is computed by Monte Carlo simulation with 200,000 replications.

We compare using the KS weight and the optimal weight in terms of the standard error for $\mu$ and $v$ when different numbers of eigenfunctions are used. The results are shown in Table \ref{tab:KSandOptW}. We clearly see that the standard errors are very close. Since the KS weight is much easier to compute, we recommend using the KS weight in our method, and it is used in all the following examples and applications.
\begin{table}[htbp!]
	\centering
	\begin{tabular}{rrrr}
		\toprule
		& \shortstack{Number of \\eigenfunctions} & \shortstack{SE for the KS weight} & \shortstack{SE for the optimal weight} \\
		\midrule
		\multicolumn{1}{c}{\multirow{5}[0]{*}{$\mu$}} & \multicolumn{1}{c}{2} & 0.0546609  & 0.0546591 \\
		\multicolumn{1}{c}{} & \multicolumn{1}{c}{3} & 0.0437687 & 0.0437683  \\
		\multicolumn{1}{c}{} & \multicolumn{1}{c}{4} & 0.0434508  & 0.0434503  \\
		\multicolumn{1}{c}{} & \multicolumn{1}{c}{5} & 0.0433678  & 0.0433673 \\
		\multicolumn{1}{c}{} & \multicolumn{1}{c}{6} & 0.0432791 & 0.0432786  \\
		\midrule
		\multicolumn{1}{c}{\multirow{5}[0]{*}{$\nu$}} & \multicolumn{1}{c}{2} & 1.7195609  & 1.7195110  \\
		\multicolumn{1}{c}{} & \multicolumn{1}{c}{3} & 0.3333475 & 0.3333454 \\
		\multicolumn{1}{c}{} & \multicolumn{1}{c}{4} & 0.1813183 & 0.1813178\\
		\multicolumn{1}{c}{} & \multicolumn{1}{c}{5} & 0.1682431 & 0.1682428 \\
		\multicolumn{1}{c}{} & \multicolumn{1}{c}{6} & 0.1531301 & 0.1531295  \\
		\bottomrule
	\end{tabular}%
	\caption{Comparison of the KS weight and the optimal weight}
\label{tab:KSandOptW}%
\end{table}%

\subsection{How Many Eigenfunctions to Use}\label{sec:NumEigen}
We examine the impact of the number of eigenfunctions used in the estimating function on the standard error of the subordinator parameter $\mu$ and $v$. We also calculate the standard error under maximum likelihood estimation which is known to achieve the best statistical efficiency. Theorem 2.2 of \cite{billingsley1961statistical} shows that $\sqrt{n}(\hat{\theta}^{\text{MLE}}_n-\bar{ \theta}) \to \gN(0,S^{-1}_0)$, where
${S_0}= \bar E[\partial_\theta\ln p^\phi(\Delta,Y_0,Y_{\Delta};\theta)\partial_\theta\ln p^\phi(\Delta,Y_0,Y_{\Delta};\theta)']$. To calculate $S_0$, we use the bilinear eigenfunction expansion \eqref{eq:SubDiffTPD} for $p^\phi(\Delta,Y_0,Y_{\Delta})$, which can be computed by truncating the infinite series when the relative accuracy level of $10^{-8}$ is reached. To calculate the expectation for $S_0$, we use Monte Carlo simulation with 200,000 replications.
%The asymptotic covariance matrix for eigenfunction approach is given in section \ref{CANsec}.

Table \ref{tab: chooseEigenNumb} displays the standard error of $\mu,v$ for MLE and various number of eigenfunctions. As expected, MLE gives the smallest standard error, and as the number of eigenfunctions increases, the standard error for both $\mu$ and $v$ decrease to those under MLE. However, the marginal decrease in the standard error is quite small when the number of eigenfunctions is already at $4$. In general, increasing the number of eigenfunctions reduces standard error but requires more computations to obtain the estimator. Using 4 eigenfunctions seems to best balance computational efficiency and statistical efficiency for the IG-SubOU process. Our finding is in line with those in \cite{kessler1999estimating} and \cite{larsen2007diffusion}, who estimate diffusions using eigenfunction-based estimating functions. For the diffusion models considered there, using a few number of eigenfunctions is already good enough.
\begin{table}[htbp!]
	\centering
	\begin{tabular}{rrr}
		\hline
		& \shortstack{SE  for $\mu$}  & \shortstack{SE for $v$} \\
		\hline
		MLE   & 0.0430  & 0.1276  \\
		2 eigenfunctions & 0.0547  & 1.7196 \\
		3 eigenfunctions & 0.0438  & 0.3333  \\
		4 eigenfunctions & 0.0435  & 0.1813  \\
		5 eigenfunctions & 0.0434  & 0.1682  \\
		6 eigenfunctions & 0.0433  & 0.1531  \\
		7 eigenfunctions & 0.0432  & 0.1479  \\
		8 eigenfunctions & 0.0432  & 0.1430  \\
		\hline
	\end{tabular}%
    \caption{Standard error for MLE and various number of eigenfunctions}	\label{tab: chooseEigenNumb}%
\end{table}%

\subsection{The Effect of Numerical Approximation}\label{sec:NumApprox}
In general, the eigenvalues and the eigenfunctions are not known explicitly, and numerical approximations are needed as discussed in Section \ref{approEigenSec}. For the IG-SubOU, since we have explicit expressions for the eigenvalues and the eigenfunctions, we can check the error of using MATSLISE to numerically calculate them. We choose $\Pi_1=\{x_i=-2+0.005i \}_{i=0}^{800}$. The region $[-2,2]$ is large enough for the OU process under the assumed parameter values. The absolute and relative error for the first four nonzero eigenvalues, as well as the maximum absolute error and maximum relative error for the first four nonconstant eigenfunctions on the grid are shown in Table \ref{tab:CPMerror}. In general, as the order of the eigenpair increases, the CPM becomes less accurate, but if the order of the eigenpairs is not too high, we still attain very high level of accuracy.
%\begin{table}[htbp]
%	\centering
%	\begin{tabular}{rrr}
%		\hline
%		Eigenfunction & Max Abs Error &Max Rel Error \\
%		\hline	
%		1&  3.7068e-10& 5.6797e-9  \\
%		2&  6.3483e-10&  4.3054e-9\\
%		3&  3.5525e-9& 4.6452e-9 \\
%		4&   1.3555e-8& 5.3099e-8\\	
%		\hline
%	\end{tabular}%
%    \caption{Error for eigenfunctions}\label{tab:CPMerror}%
%\end{table}%

\begin{table}[htbp!]
	\centering
	\begin{tabular}{crrrr}
		\hline
		Eigenpair & \shortstack{Abs Error\\of Eigenvalue} & \shortstack{Rel Error\\of Eigenvalue} & \shortstack{Max Abs Error \\of Eigenfunction} & \shortstack{Max Rel Error\\of Eigenfunction} \\
		\hline
		1 & 4.1284e-12 & 1.0321e-10 &  3.7068e-10& 5.6797e-9 \\
		2 & 2.1349e-12 & 2.6687e-11 &  6.3483e-10&  4.3054e-9\\
		3 & 3.3295e-12 & 2.7746e-11 &  3.5525e-9& 4.6452e-9 \\
		4 & 3.6774e-12 & 2.2984e-11  &   1.3555e-8& 5.3099e-8\\
		\hline
	\end{tabular}%
	\caption{Error for eigenpairs using the CPM}\label{tab:CPMerror}%
\end{table}%

We also need to numerically calculate the integral \eqref{eq:integral}  to get the KS weight. To do this, we specify two grids, $\Pi_2=\{x_i=-2+0.02i\}_{i=0}^{200}$ and $\Pi_2'=\{x_i=-2+0.01i\}_{i=0}^{400}$. For each grid, we run the Li and Zhang algorithm (\cite{LiZhangSIAM}) to approximate \eqref{eq:integral} and then we extrapolate the results under the two grids to obtain a more accurate approximation.
The estimation results for the two methods are listed in Table \ref{tab: compareTwoAppro}. Here to estimate the standard error, we simulate 100 trajectories with each containing 2000 daily observations to get 100 realizations for the estimator and calculate its sample standard deviation.
From the table, we can see that both methods give almost identical results. Therefore, the numerical approximation proposed in Section \ref{sec:num} works very well.
\begin{table}[htbp!]
	\centering
	\begin{tabular}{cccc}
		\hline
		& Exact Eigenpair  ($\hat{\theta}_n$ (SE)) & Approx Eigenpair ($\hat{\theta}_n^h$ (SE)) \\
		\hline
		$\kappa$ & 0.0426(0.0071)  &0.0426(0.0071) \\
		$\vartheta$ & -0.0027(0.0336) & -0.0027(0.0336) \\
		$\mu$    & 1.0033(0.0429) & 1.0034(0.0429)\\
		$\nu$    & 0.5253(0.1840)  &0.5254(0.1840) \\
		\hline
	\end{tabular}%
	\caption{Estimation results using exact and approximate eigenpairs}\label{tab: compareTwoAppro}%
\end{table}%

%In Theorem \ref{CAN2}, we proved that $\|\hat\theta_n^h-\hat\theta_n\|=O(h_n^2)$ if $p=2$ in \eqref{eq:error_estfunc}. Here we verify the error order for the estimator numerically. We set step size $h_n$ to different levels: $0.02, 0.01, 0.005, 0.004, 0.0033$ and run the Li and Zhang algorithm without extrapolation. In the previous 100 simulated trajectories, we obtain a value for the estimator using the exact estimating function and the approximate estimating function for each given step size. We then, for each step size, take the average of the estimated value across all 100 trajectories. We denote the average value by $\bar \mu_n$, $\bar \nu_n$ for the exact case and by $\bar \mu_n^h$, $\bar \nu_n^h$ for the approximate case. The error $e(h)= \sqrt{(\bar \mu_n^h-\bar \mu_n)^2 +(\bar \nu_n^h-\bar \nu_n)^2}$. We use linear regression to fit the data of $\ln e(h)$ and $\ln h$, which gives us the regression equation $\ln e(h)=2.09 \ln h+7.23$, confirming second order convergence.
%\begin{table}[htbp]
%	\centering
%	\caption{Comparison of Estimators}
%	\begin{tabular}{lrr}
%		\toprule
%		& $|\bar \mu_n^h-\bar \mu_n|$ & $|\bar \nu_n^h-\bar \nu_n|$ \\
%		\midrule
%		h=0.02 & 1.138E-02 & 2.001E-01 \\
%		h=0.01 & 4.526E-04 & 1.684E-01 \\
%		h=0.005 & 4.583E-04 & 7.188E-02 \\
%		h=0.004 & 4.180E-04 & 1.758E-02 \\
%		h=0.0033 & 5.672E-04 & 2.233E-03 \\
%		\bottomrule
%	\end{tabular}%
%	\label{tab:compEstimator}%
%\end{table}%

\subsection{The Impact of Data Frequency}\label{sec:DataFreq}
We examine the impact of data frequency on the standard error of the estimator. Now we set the sampling time interval $\Delta=0.01$ days but keep the same sampling period. Thus, there are 200,000 observations in the sampling period of 2000 days. The standard error of the estimator, which is again estimated from 100 simulated trajectories, is shown in Table \ref{tab: highFre}. Increasing data frequency in the same sampling period has little impact on estimating $\vartheta$, the long-run mean of the OU process. To reduce its standard error, the sampling period should be increased. Higher frequency of data does reduce the standard error of $\kappa$ (the mean-reversion speed of the OU process), $\mu$ (mean rate of the subordinator) and $v$ (variance rate of the subordinator). In particular, by sampling 100 times faster, we achieve a reduction by around 50\% in the standard error of $v$. For $\kappa$ and $\mu$, the reduction ratio is much smaller but still significant.
\begin{center}
	% Table generated by Excel2LaTeX from sheet 'Sheet1'
	\begin{table}[htbp!]
		\centering
		\begin{tabular}{ccccc}
			\hline
			& $\kappa$ & $\vartheta$ & $\mu$ & $\nu$ \\
			\hline
			Estimate & 0.0423 & -0.0003 & 1.0020 & 0.5063\\
			SE & 0.0065 & 0.0336 & 0.0283 & 0.0928\\
			\hline
		\end{tabular}%
\caption{Estimation results using high frequency data}
		\label{tab: highFre}%
	\end{table}%
\end{center}

\subsection{A Subordinate Diffusion Model for Fitting VIX Data}\label{sec:VIX}
The CBOE volatility index, known as VIX, is a well-known fear gauge with large volume of futures and options contracts written on it. Using high-frequency data of VIX and applying non-parametric statistical tools developed in \cite{todorov2010activity}, \cite{TodorovTauchen} shows that VIX follows a pure jump process with infinite jump activity and infinite jump variation. Here, we develop a parametric pure jump model with these features based on subordinate diffusions for fitting VIX data.

\cite{goard2013stochastic} analyzed the fit of a class of diffusion models to VIX and concluded that the 3/2 diffusion, which is the solution to the SDE $dX_t=\kappa X_t(\vartheta-X_t)dt+\sigma X^{3/2}_tdW_t$, achieves the best fit. Here, we consider a more general class of diffusions than \cite{goard2013stochastic}. We assume that $X$ follows
\begin{equation}\label{eq:VIXDiff}
dX_t=\mu(X_t)dt+\sigma X_t^\beta dW_t,\  \mu(x)=\sum_{i=0}^{k}c_i x^i,\ k\ge 1,\ \beta>0.
\end{equation}
The volatility of VIX is positively correlated with VIX itself, so it is reasonable to assume $\beta>0$. We then construct a subordinate diffusion by time changing $X$ with an independent inverse Gaussian subordinator without drift. We choose this subordinator because it can be shown that using it makes the time changed process $Y$ a pure jump process with infinite jump activity and infinite jump variation, capturing the features found in \cite{TodorovTauchen} (the proof is similar to the proof of Proposition 2 in \cite{LLZVIX}). Due to the scaling invariance result in Corollary \ref{cor:TSS}, we fix $\sigma=1$ and estimate the other parameters.

For $X$ to satisfy Assumption 1, we need to impose some restrictions on the parameters. The sufficient condition for $X$ to have purely discrete spectrum can be derived by applying Theorem 3.3 of \cite{linetsky2007spectral}, which requires $\beta \ne 1$ or $\beta=1, k\ge 2$. The sufficient condition that guarantees ergodicity for diffusions of form \eqref{eq:VIXDiff} can be found in \cite{conley1997short}. Together, we have the following restrictions: $\beta=1$, $k\ge 2$, $c_0>0$, $c_k<0$ or
$\beta\ge 1/2$, $\beta\ne 1$, if $2\beta<k+1$, $c_0>0$, $c_k<0$; if $2\beta=k+1$, $c_0>0$, $c_k<\frac{1}{2}$; if $2\beta>k+1$, $c_0>0$ and no restrictions on $c_k$; if $\beta=1/2$, $c_0>0$ is replaced by $c_0>\frac{1}{2}$ in the previous statements. In our estimation, we set $k=2$.

To estimate the diffusion parameters, we first apply \eqref{eq: diffCon1} with $f(x)=\partial_{c}\log q(x)$ (this denotes the column vector of partial derivative w.r.t. $c_0,c_1,c_2$) and obtain the following moment condition
\[E\left[ \mu(Y_t)d'(Y_t) + \frac{1}{2}{\sigma ^2}Y_t^{2\beta }d''(Y_t) \right] = 0,\]
where
\[d'(Y_t) = \frac{\partial^2}{\partial y\partial c}\log q(Y_t)=\left[ {\begin{array}{*{20}{c}}
	{2Y_t^{ - 2\beta }}\\
	{2Y_t^{1 - 2\beta }}\\
	{2Y_t^{2 - 2\beta }}
	\end{array}} \right],d''(Y_t) = \left[ {\begin{array}{*{20}{c}}
	{ - 4\beta Y_t^{ - 1 - 2\beta }}\\
	{2(1 - 2\beta )Y_t^{ - 2\beta }}\\
	{2(2 - 2\beta )Y_t^{1 - 2\beta }}
	\end{array}}\right] .\]
We add another moment condition to estimate $\beta$, which is
\begin{equation}
E\left[\mu(Y_{t_1})\left( {g({Y_{t_1} } - {Y_{t_0}}) + g({Y_{t_0}} - {Y_{t_1} })} \right)\right] +E\left[ \frac{1}{2}{\sigma ^2}Y_{t_1}^{2\beta }\left( {g'({Y_{t_1} } - {Y_{t_0}}) - g'({Y_{t_0}} - {Y_{t_1} })} \right) \right] = 0,
\end{equation}
with $g(x)=\frac{1}{\delta \sqrt{2\pi}}e^{-\frac{ (x-m)^2}{2\delta ^2}}$. Following \cite{conley1997short}, we set $m=0$ and $\delta$ to be the $50\%$ quantile of the empirical distribution of $\{ |Y_{t_i}-Y_{t_{i-1} }|\}$, which is 0.0056 in our case. To estimate $\mu$ and $v$ which are the mean rate and variance rate of the inverse Gaussian subordinator, we use the estimating function \eqref{eq: F2-1} with $M=6$. The eigenvalues and eigenfunctions are first numerically calculated by the CPM, and then we run Li and Zhang algorithm with extrapolation to calculate \eqref{eq:integral} to obtain the KS weight.

We also estimate the diffusion model \eqref{eq:VIXDiff} as a benchmark. It is not difficult to see that the general diffusion model can be expressed as the diffusion in \eqref{eq:VIXDiff} with $\sigma=1$ time changed with a deterministic clock $T_t=\gamma t$. This model is again estimated using the two-step procedure above by first estimating the diffusion parameters $(c_0,c_1,c_2,\beta)$ and then $\gamma$.

The data we use is daily VIX data in the period 03/01/2012 to 31/07/2014 with a total of 900 days downloaded from CBOE. Figure \ref{VIX} plots the sample path of VIX, which is strongly mean-reverting with large moves. The estimation results for the diffusion model and the subordinate diffusion model are listed in Table \ref{tab:estResult}. Here, the time unit is year and $\Delta=\frac{1}{252}$ years.

% Table generated by Excel2LaTeX from sheet 'para'
\begin{table}[htbp]
	\centering
	\begin{tabular}{crrrrrrr}
		\toprule
		para ($\sigma=1$)   & $c_0$    & $c_1$    & $c_2$    & $\beta$  & $\mu$    & $\nu$    & $\gamma$ \\
		\midrule
		\shortstack{diffusion} &  \shortstack{0.0053} & \shortstack{0.0177 } & \shortstack{-0.3273} & \shortstack{2.3695}     & N.A.    & N.A.    & 225.3732 \\
		\shortstack{jump  } &  \shortstack{0.0053} & \shortstack{0.0177 } & \shortstack{-0.3273} & \shortstack{ 2.3695}  &  \shortstack{222.2240}  & \shortstack{212.2837}  & N.A. \\
		\bottomrule
	\end{tabular}%
	\caption{Estimated parameters of the diffusion and the subordinate diffusion model}\label{tab:estResult}%
\end{table}%

We employ the hypothesis test used in \cite{larsen2007diffusion} to test the goodness of fit. This test evaluates whether $U_i:=F(Y_{t_i}|Y_{t_{i-1}};\theta)$ are i.i.d. uniform random variables over $[0,1]$ ($F(y|x;\theta)$ is the conditional distribution function for the parametric model). To do this, we first calculate $U_i$s for the model under consideration using the estimated parameter values. Then, we apply the Kolmogorov-Smirnov test and the $\chi^2$ test. The test statistics and $p$-value are shown in Table \ref{tab:compareTwoModel}, which shows that the diffusion model is rejected at all practical significance levels while the subordinate diffusion model fits the data well. The superiority of the subordinate diffusion model over the diffusion counterpart is also evidenced from the QQ plot in Figure \ref{qqplot}.
\begin{table}[htbp!]
	\centering
	\begin{tabular}{rrrrrr}
		\toprule
		&       &       & K-S test  & $\chi^2$ test (20 bins) & $\chi^2$ test (100 bins) \\
		\midrule
		\multicolumn{1}{c}{\multirow{2}[4]{*}{\shortstack{SubDiff(Jump)}}} & \multicolumn{1}{c}{} & statistic & 0.0374 & 20.0444  & 88.8889 \\
		\multicolumn{1}{c}{} & \multicolumn{1}{c}{} & p-value & 15.63\% & 12.88\% & 62.97\% \\
		\midrule
		\multicolumn{1}{c}{\multirow{2}[4]{*}{\shortstack{Diffusion }}} & \multicolumn{1}{c}{} & statistic & 0.0599 & 45.7333 & 150.0000 \\
		\multicolumn{1}{c}{} & \multicolumn{1}{c}{} & p-value & 0.3\% & 0.0059\% & 0.0277\% \\
		\bottomrule
	\end{tabular}%
	\caption{Goodness of fit test results}\label{tab:compareTwoModel}%
\end{table}%

\begin{figure}[ht]
	\centering
	\subfigure[VIX historical data]{
		\includegraphics[width = 76mm,height=60mm]{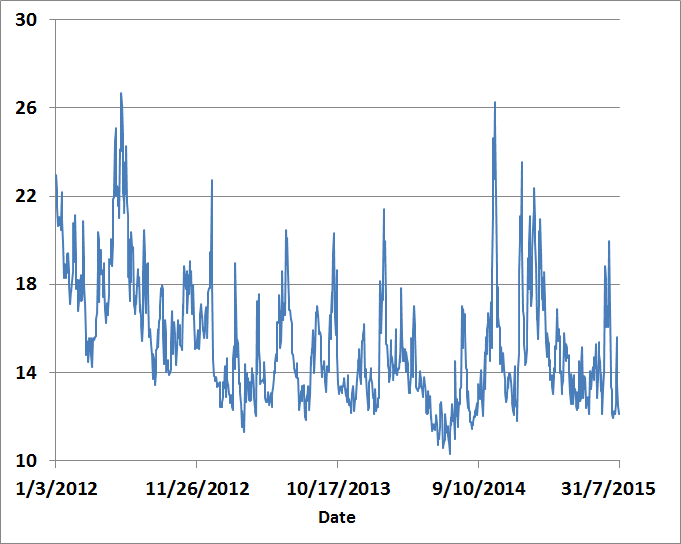}
		\label{VIX}
	}
	\subfigure[QQ plot]{
		\includegraphics[width = 76mm,height=60mm]{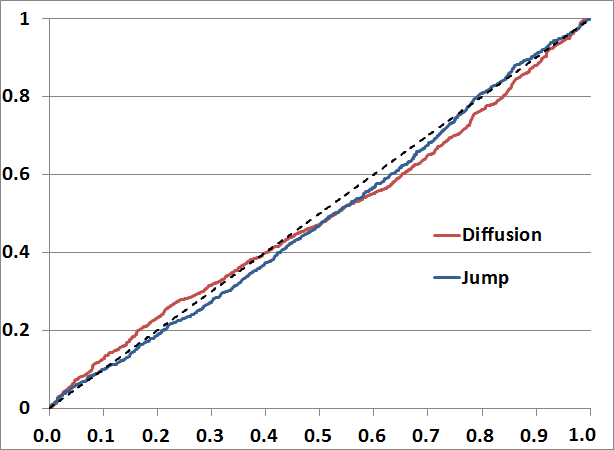}
		\label{qqplot}
	}
	\caption{VIX historical data and QQ Plot}\label{fig:VIX}
\end{figure}

\section{Conclusions}\label{sec:conclusions}
This paper considers parametric inference for a discretely observed ergodic subordinate diffusion. In general, we can only identify the characteristics of a subordinate diffusion up to scale and a two-step estimation procedure based on estimating functions is proposed, which is computationally and statistically efficient. The estimating function in the first step is based on the moment conditions that only involve diffusion parameters. In the second step, a martingale estimating function constructed using eigenvalues and eigenfunctions of the subordinate diffusion is used to estimate the parameters of the L\'{e}vy subordinator. Our method does not require the eigenpairs to be known analytically. For the general case, we develop an efficient numerical procedure to calculate the martingale estimating function. Under regularity conditions, consistency and asymptotic normality of our estimator are established considering the effect of numerical approximation. In future research, we will apply our method to estimate subordinate diffusion models in other applications, such as fitting commodity price data. We also plan to develop efficient methods to estimate other types of time-changed diffusions, for which the time change is not a L\'{e}vy subordinator but a more complicated process (see \cite{LiLinetsky} and \cite{MendozaCarrLinetsky}). These models exhibit stochastic volatility and are very useful in financial applications.

%we plan to develop asymptotic analysis for subordinate diffusion sampled at high-frequency. As the example in Section \ref{sec:DataFreq} hints, for some %diffusion parameters and subordinator parameters, convergence is also observed when the data frequency increases to infinity while the sampling period is %fixed. This is different from the current setting in which the data frequency is fixed while the sampling period approaches infinity.

\appendix
\section{Proofs}

\noindent{\textbf{Lemma \ref{lemma:ergodicity}}}:
From Proposition 6 in \cite{hansen1995back}, $\{Y_t, t\ge0\}$ is ergodic if and only if $\gG^\phi f=0$ for $f\in \gD(\gG^\phi)\bigcap\gL(q)$ implies that $f=0$. Here $\gD(\gG^\phi)$ is the domain of $\gG^\phi$ and $\gL (q)=\{f\in L^2(I,q):\int_{l}^{r}f(x)q(x)dx=0\}$. We verify this equivalent condition. For $f\in \gD(\gG^\phi)\bigcap\gL(q)$ such that $\gG^\phi f=0$, suppose that $f$ is not identically zero. Since $\gD(\gG^\phi)\subseteq\gD(\gG)$, $f\in \gD(\gG)\bigcap\gL(q)$.  Then $\gG^\phi f=0$ shows that $f$ is an eigenfunction of $\gG^\phi$ with eigenvalue equal to $0$. Since $\gG$ and $\gG^\phi$ have the same set of eigenfunctions, we must also have $\gG f=\lambda f$ for some $\lambda\le 0$. However, $\gG^\phi f=-\phi(-\lambda)f$ by \eqref{eq:SubDiffGenEigen}. Therefore $\phi(-\lambda)=0$, which further implies $\lambda=0$ by \eqref{eq:LTLevySub}, and hence $\gG f=0$ for some nonzero $f$. This contradicts that $\{X_t,t\ge0\}$ is ergodic (which we assume in Assumption \ref{assump:diff} (1)), because applying \cite{hansen1995back}, Proposition 6 again shows that when $\{X_t, t\ge0\}$ is ergodic, $\gG f=0$ for $f\in \gD(\gG)\bigcap\gL(q)$ implies that $f=0$. So now we can conclude that $\gG^\phi f=0$ for $f\in \gD(\gG^\phi)\bigcap\gL(q)$ implies that $f=0$ and $\{Y_t, t\ge0\}$ is ergodic.

To show that $\{Y_t:t=0,\Delta,2\Delta,\cdots \}$ is ergodic, we can verify the equivalent condition in Proposition 7 of \cite{hansen1995back}. The arguments are similar to the above and are omitted here to save space.

To show that $q(x)$ is the stationary density for $Y$, let $p(t,x,y)$ be the transition density of $X$ and $s_t(du)$ be the distribution of $T_t$. Then $Y$'s transition density $p^\phi(t,x,y)=\int_{(0,\infty)}p(u,x,y)s_t(du)$. We have
\begin{align}
\int_I p^\phi(t,x,y)q(x)dx &= \int_I \int_{(0,\infty)}p(u,x,y)s_t(du) q(x)dx=\int_{(0,\infty)}\int_I p(u,x,y)q(x)dx s_t(du)\\
&=\int_{(0,\infty)}q(y)s_t(du)=q(y)\int_{(0,\infty)}s_t(du)=q(y).
\end{align}
This shows the claim. \qed

\bigskip
\noindent{\textbf{Theorem \ref{thm:idenPPS}}}: ``$\Rightarrow$'': We first prove the implications of $p^\phi_1(t,x,y)=p^\phi_2(t,x,y)$. Let $P_i^\mu$ be the law of
$Y$ with transition density $p^\phi_i(t,x,y)$ and initial distribution $\mu$ ($i=1,2$). Then $p^\phi_1(t,x,y)=p^\phi_2(t,x,y)$ implies $P_1^\mu=P_2^\mu$ for any $\mu$. Since $\{Y_t,t\ge0\}$ is ergodic by Lemma \ref{lemma:ergodicity}, we have for any initial distribution $\mu$ and any $f$ such that $\int_l^r f(x)q_i(x)dx$ ($i=1,2$) is finite (note that $q_i(x)$ is the stationary density for $Y$ under $p^\phi_i(t,x,y)$ by Lemma \ref{lemma:ergodicity}),
\begin{equation}
\frac{1}{t}\int_0^t {f({Y_t})dt}  \to \int_l^r f(x)q_i(x)dx,\ P^\mu_i-a.s.,\ i=1,2.
\end{equation}
Since $P_1^\mu=P_2^\mu$, we must have $\int_l^r f(x)q_1(x)dx=\int_l^r f(x)q_2(x)dx$. Taking $f(x)=1_{\{x\le y\}}$ for arbitrary $y\in(l,r)$ shows $q_1(x)=q_2(x)$ for $x\in (l,r)$. If a boundary point is finite and included in the state space, using the continuity of $q_i(x)$ shows that $q_1(x)=q_2(x)$ at the boundary point. To sum up, $q_1(x)=q_2(x)$ for $x\in I$. In the following, we will denote the common stationary density by $q(x)$ and all common quantities will be denoted without subscript $i$. The rest of the proof consists of two steps.

\noindent Step 1: We will compare $\theta_1(x)$ and $\theta_2(x)$. Two cases are considered below.

\noindent Case 1. Suppose $X$ lives on a compact interval $[l,r]$ with two reflecting boundaries. In this case, the spectrum of $X$ is discrete and so is the spectrum of $Y$. Note that $p^\phi_1(t,x,y)=p^\phi_2(t,x,y)$ implies that $\gP^\phi_{1t}=\gP^\phi_{2t}$ for any $t>0$, so they share the same orthonormal set of  eigenfunctions, and so do $\gG_1$ and $\gG_2$ (because a function is an eigenfunction of $\gG_i$ if and only if it is an eigenfunction of $\gP^\phi_{it}$).
Take one eigenfunction $\varphi(x)$. We have
\begin{equation}
{\gG_1}\varphi(x) = {\lambda _1}\varphi(x),\ 	{\gG_2}\varphi(x) = {\lambda _2}\varphi(x).
\end{equation}
Note that $\gG_i \varphi(x)=\mu_i(x)\varphi'(x)+\frac{1}{2} \sigma^2_i(x)\varphi''(x)$. Multiplying both sides of the equation by $q(x)$ and noting that $\mu_i(x) q(x)= \frac{1}{2}(\sigma^2_i(x)q(x))'$, we get
	\[\frac{1}{2}({\sigma_1^2}{(x)}q(x))'\varphi'(x) + \frac{1}{2}({\sigma_1^2}{(x)}q(x))\varphi'' (x) = {\lambda_1}\varphi (x)q(x)\]
	or
	\[\frac{1}{2}({\sigma_1^2}{(x)}q(x)\varphi'(x))' = {\lambda_1}\varphi(x)q(x).\]
	Integrating on both sides from $l$ to $y$ and using $\varphi'(l)=0$, we obtain
	\begin{equation} \label{eq: 8.1}
		{\sigma_1^2}{(y)}q(y)\varphi' (y) = 2{\lambda _1}\int_l^y {\varphi (x)q(x)} dx .
	\end{equation}
	Similarly, we have
	\begin{equation} \label{eq: 8.2}
		{\sigma_2^2}{(y)}q(y)\varphi' (y) = 2{\lambda _2}\int_l^y {\varphi (x)q(x)} dx.
	\end{equation}
	Combining \eqref{eq: 8.1} and \eqref{eq: 8.2}, it is clear that $\sigma_1(\cdot)$ and $\sigma_2(\cdot)$ are proportional on $\{y\in I:\int_l^y {\varphi (x)q(x)} dx\ne 0\}$. Since $\{x:\varphi(x)=0\}$ has a finite number of points in light of Theorem 4.1 of \cite{hansen1998spectral} and $q(x)>0$ for all $x$,  there are finite number of $y$ such that $\int_l^y \varphi(x)q(x) dx= 0$. Hence, except on a finite number of points, $\sigma_1(x)$ and $\sigma_2(x)$ are proportional. However, from the continuity of $\sigma_1(x)$ and $\sigma_2(x)$ (Assumption \ref{assump:diff} (3)), they must be proportional on the entire $I$, i.e., there exists $c>0$ such that $\sigma_2(x)=\sqrt{c}\sigma_1(x)$ for all $x\in I$. Because $\mu_i(x) q(x)= \frac{1}{2}(\sigma^2_i(x)q(x))'$,
we have $\mu_2(x)=c\mu_1(x)$ for all $x\in I$. Together, we have $\theta_2(x)=c\theta_1(x)$ for all $x\in I$.
	
\noindent Case 2. We now consider $X$ with general state space $I$. For any finite $[a,b] \subset I$, we consider a diffusion $X'$ living on $[a,b]$ with the same drift and diffusion coefficient as $X$ on this interval, and $X'$ is reflected at $a,b$. Denote the generators of $X'$ associated with $\theta_1(x)$ and $\theta_2(x)$ by $\gA_1$ and $\gA_2$. Note that the stationary density of $X'$ is proportional to the stationary density of $X$ on $[a,b]$, so it is the same under $\theta_1(x)$ and $\theta_2(x)$. We will show that $\gA_1$ and $\gA_2$ have the same set of eigenfunctions.

Suppose $\varphi(x)$ is an eigenfunction of $\gA_1$ with corresponding eigenvalue $\lambda_1\le 0$. Then
\[\gA_1 \varphi(x)=\lambda_1 \varphi(x),\qquad \varphi '(a)=\varphi'(b)=0. \]
From the discussions on p.788 of \cite{hansen1995back}, $\varphi(x)$ has derivatives up to the fourth order on $(a,b)$ and all of them have finite limits when $x$ approaches $a$ and $b$. Find $\delta>0$ small enough such that $[a-\delta,b+\delta]\subset I$. Define a new function $\hat{\varphi}(x)$ on $I$ such that $\hat{\varphi}(x)=\varphi(x)$ on $(a,b)$, $\hat{\varphi}(x)=0$ for $x \ge b+\delta$ and $x\le a-\delta$, $\lim_{x \to a} {{\hat \varphi }^{(n)}}(x) = \lim_{x \to a} {\varphi ^{(n)}}(x)$, $\lim_{x \to b} {{\hat \varphi }^{(n)}}(x) = \lim_{x \to b} {\varphi ^{(n)}}(x)$ for $0 \le n\le 4$, and $\hat{\varphi}(x)$ has derivatives up to the fourth order on $(a-\delta,b+\delta)$. Note that $\hat{\varphi}(x)$ has compact support and it belongs to $D(\gG_1)$ and $D(\gG_2)$.
Moreover, $\gG_1 \hat{\varphi}(x)$ and $\gG_2 \hat{\varphi}(x)$ are twice continuously differentiable and with compact support, thus $\gG_1 \hat{\varphi}(x)$ $(\gG_2 \hat{\varphi}(x))$ is in $D(\gG_2)$ $ (D(\gG_1))$. It is easy to see that $\gG_1\gG_2 \hat{\varphi}(x) =\gG_2\gG_1 \hat{\varphi}(x)$. Thus, we have
\[\gA_1\gA_2 \varphi (x)=\gG_1\gG_2 \hat{\varphi}(x) =\gG_2\gG_1 \hat{\varphi}(x)=\gA_2\gA_1\varphi(x)=\lambda_1 \gA_2 \varphi(x),\qquad  x\in (a,b).\]
This shows that $\gA_2 \varphi(x)$ is an eigenfunction of $\gA_1$ with eigenvalue $\lambda_1$, which further implies that $\gA_2\varphi(x) =\lambda_2 \varphi(x)$ for some constant $\lambda_2$, and hence $\varphi(x)$ is also an eigenfunction of $\gA_2$. Conversely, using similar arguments, one can show that if $\varphi(x)$ is an eigenfunction of $\gA_2$, then it is also an eigenfunction of $\gA_1$. This shows that $\gA_1$ and $\gA_2$ have the same set of eigenfunctions.

Now we can repeat the procedure used in Case 1 and conclude that for any finite $[a,b]\subset I$, there exists $c>$ such that $\theta_2(x)=c\theta_1(x)$ for all $x\in [a,b]$. Due to the continuity of $\theta_1(x)$ and $\theta_2(x)$, $c$ does not depend on $[a,b]$ and since $[a,b]$ is arbitrary, $\theta_2(x)=c\theta_1(x)$ holds for all $x\in I$.

\noindent Step 2: We now show the implications on the characteristics of the subordinator. We have proved that $\theta_2(x)=c\theta_1(x)$ for $x\in I$ for some constant $c>0$. Then $\gG_2 f(x)=c\gG_1f(x)$ for $f \in \gD(\gG_1)\cap \gD(\gG_2)$. We have already explained in the proof of Case 1 that $p^\phi_1(t,x,y)$ $p^\phi_2(t,x,y)$ share the same orthonormal set of eigenfunctions which we denote by $\{\lambda_n, n=0,1,\cdots\}$. Let $\lambda_n$ be the eigenvalue of $\gG_1$ for $\varphi_n(x)$, then $c\lambda_n$ is the eigenvalue of $\gG_2$ for $\varphi_n(x)$. For each $\varphi_n(x)$, we have
\[\gP_t^{{\phi _1}} \varphi _n= {e^{ - \phi _1 ( - {\lambda _n})t}}{\varphi _n},\ \gP_t^{{\phi _2}} \varphi _n= {e^{ - \phi _2 ( - {c\lambda _n})t}}{\varphi _n}.\]
$\gP _t^{\phi _1} =\gP _t^{\phi _2}$ implies that
\begin{equation} \label{eq: relationOfLaplace}
	{\phi _1}( - \lambda_n ) = {\phi _2}( - c\lambda_n )\quad \text{for}\ n=0,1,2,\cdots.
\end{equation}
From \cite{bertoin1999subordinators}, p.7
	\[\mathop {\lim }\limits_{n \to \infty } \frac{{{\phi _1}( - {\lambda _n})}}{{ - {\lambda _n}}} = \mathop {\lim }\limits_{\lambda  \to \infty } \frac{{{\phi _1}(\lambda )}}{\lambda } = \mathop {\lim }\limits_{\lambda  \to \infty } \left( {{\gamma _1} + \int_0^\infty  {\frac{{1 - {e^{ - \lambda s}}}}{\lambda }{\nu _1}(ds)} } \right) = {\gamma _1}\]
	and
	\[\mathop {\lim }\limits_{n \to \infty } \frac{{{\phi _2}( - c{\lambda _n})}}{{ - c{\lambda _n}}} = \mathop {\lim }\limits_{\lambda  \to \infty } \frac{{{\phi _2}(\lambda )}}{\lambda } = \mathop {\lim }\limits_{\lambda  \to \infty } \left( {{\gamma _2} + \int_0^\infty  {\frac{{1 - {e^{ - \lambda s}}}}{\lambda }{\nu _2}(ds)} } \right) = {\gamma _2}.\]
	By \eqref{eq: relationOfLaplace}, we have
	\[\mathop {\lim }\limits_{n \to \infty } \frac{{{\phi _2}( - c{\lambda _n})}}{{ - c{\lambda _n}}} = \mathop {\lim }\limits_{n \to \infty } \frac{{{\phi _1}( - {\lambda _n})}}{{ - c{\lambda _n}}} = \frac{{{\gamma _1}}}{c}.\]
	Thus $\gamma_1=c\gamma_2.$
	From \cite{bertoin1999subordinators}, Section 1.2, we have
\begin{equation}\label{eq:phi-rel}
\frac{{{\phi _1}( - {\lambda _n})}}{{ - {\lambda _n}}} - {\gamma _1} = {{\hat \omega }_1}( - {\lambda _n}),\quad \frac{{{\phi _2}( - c{\lambda _n})}}{{ - c{\lambda _n}}} - {\gamma _2} = {{\hat \omega }_2}( - c{\lambda _n})\quad \text{for}\ n=0,1,2,\cdots.
\end{equation}
Since
$\frac{{{\phi _1}( - {\lambda _n})}}{{ - {\lambda _n}}} - {\gamma _1} = c\left( {\frac{{{\phi _2}( - c{\lambda _n})}}{{ - c{\lambda _n}}} - {\gamma _2}} \right)$, we have
	${{\hat \omega }_1}( - {\lambda _n}) = c{{\hat \omega }_2}( - c{\lambda _n})$ for all $n$.

\smallskip
``$\Leftarrow$'' We now prove that the conditions in \eqref{eq:idenPPSConditions} implies $p^\phi_1(t,x,y)=p^\phi_2(t,x,y)$. From \eqref{eq:phi-rel}, we see that \eqref{eq:idenPPSConditions} gives us that $\phi_1(-\lambda_n)=\phi_2(-c\lambda_n)$. Our assumption implies that both $p^\phi_1(t,x,y)$ and $p^\phi_2(t,x,y)$ are given by the bilinear eigenfunction expansion \eqref{eq:SubDiffTPD}. Using the given conditions and the expansion, it is straightforward
verify that $p^\phi_1(t,x,y)=p^\phi_2(t,x,y)$.
\qed

\bigskip
\noindent{\textbf{Proof of Corollary \ref{cor:TSS}}}: For tempered stable subordinators, using \eqref{eq:TSLevyMeasure}, we obtain that
\begin{equation}
\hat \omega (\lambda ) = - C\Gamma ( - p)[{{(\lambda  + \eta )}^p} - {\eta ^p}]/\lambda,\ \lambda>0.
\end{equation}
The condition $\hat \omega_1(\lambda_n)=c\hat{\omega}_2(c\lambda_n)$ for all eigenvalues $\lambda_n>0$ becomes that the equation
\begin{equation}
- C_1\Gamma ( - p_1)[{{(\lambda_n  + \eta_1 )}^{p_1}} - {\eta_1^{p_1}}]/\lambda_n=- C_2\Gamma(-p_2)[{{(c\lambda_n  + \eta_2 )}^{p_2}} - {\eta_2^{p_2}}]/\lambda_n
\end{equation}
has infinitely many solutions on $(0,\infty)$. It is not difficult to show that this is equivalent to $p_1=p_2$, $\eta_1=\frac{1}{c} \eta_2$ and $C_1=c^{p_1}C_2$.\qed

\bigskip
\noindent{\textbf{Proof of Proposition \ref{prop:optimality}}}:
Denote
\begin{equation} \label{eq: defY}
	Y = \left[ {\begin{array}{*{20}{c}}
			{ - {D_{2,1}}D_{1,1}^{ - 1}}&I
		\end{array}} \right]\left[ {\begin{array}{*{20}{c}}
		{{S_{1,1}}}&{{S_{1,2}}}\\
		{{S_{2,1}}}&{{S_{2,2}}}
	\end{array}} \right]\left[ {\begin{array}{*{20}{c}}
	{ - {{(D_{1,1}')}^{ - 1}}D_{2,1}'}\\
	I
\end{array}} \right].
\end{equation}
$Y^*$ is defined similarly with $Y$ by replacing $D_{2,1}, S_{1,2}, S_{2,1}, S_{2,2}$ with $D_{2,1}^*, S_{1,2}^*, S_{2,1}^*, S_{2,2}^*$. Since $W^*$ is optimal, \begin{equation} \label{eq: defM}
M:=D_{2,2}^{ - 1}Y{(D_{2,2}')^{ - 1}} - {(D_{2,2}^*)^{ - 1}}{Y^*}{({D_{2,2}^*}')^{ - 1}}
\end{equation}
is positive semi-definite for all possible $W$. Note that
\[Y - {D_{2,2}}{(D_{2,2}^*)^{ - 1}}{Y^*}{(D{_{2,2}^* }')^{ - 1}}D_{2,2}' = {D_{2,2}}M D_{2,2}'\]
is positive semi-definite. Now let $\tilde{F}_{n,2}=F^*_{n,2}+\alpha F_{n,2}$ and $\tilde{Y}$ is still defined in the same way as \eqref{eq: defY} by replacing $D$ and $S$ of $F_{n,2}$ with those of $\tilde{F}_{n,2}$. Then
	\begin{align}
		&\tilde{Y} - {\tilde{D}_{2,2}}{{(D_{2,2}^*)}^{ - 1}}{Y^*}{{({D_{2,2}^*}')}^{ - 1}}{\tilde{D}_{2,2}}'\notag \\
		&={\alpha ^2}\left(Y - {D_{2,2}}{{(D_{2,2}^*)}^{ - 1}}{Y^*}{{({D_{2,2}^*}')}^{ - 1}} D_{2,2}' \right)\notag \\
		&+\alpha \left( \left[ {\begin{array}{*{20}{c}}
				{ - D_{2,1}^*D_{1,1}^{ - 1}}&I
			\end{array}} \right]\left[ {\begin{array}{*{20}{c}}
			{{S_{1,1}}}&{{S_{1,2}}}\\
			{S_{2,1}^*}&{\hat{S}_{2,2}}
		\end{array}} \right]\left[ {\begin{array}{*{20}{c}}
		{ - {{(D_{1,1}^\prime )}^{ - 1}}(D_{2,1})'}\\
		I
	\end{array}} \right]-{Y^*}{{({D_{2,2}^*}')}^{ - 1}}D_{2,2}' \right) \\
	&+\alpha \left( \left[ {\begin{array}{*{20}{c}}
			{ - D_{2,1}D_{1,1}^{ - 1}}&I
		\end{array}} \right]\left[ {\begin{array}{*{20}{c}}
		{{S_{1,1}}}&{S_{1,2}^*}\\
		{{S_{2,1}}}&{\hat{S}_{2,2}'}
	\end{array}} \right]\left[ {\begin{array}{*{20}{c}}
	{ - {{(D_{1,1}^\prime )}^{ - 1}}(D_{2,1}^*)'}\\
	I
\end{array}} \right]- {D_{2,2}}{{(D_{2,2}^*)}^{ - 1}}{Y^*}  \right). \label{eq: Ypart1}
\end{align}
This is of form $\alpha^2H_1+\alpha H_2$, where $H_1$ is positive semi-definite since $W^*$ is optimal. $\alpha^2H_1+\alpha H_2$ is positive semi-definite only when $H_2=0$.
The equation that $H_2=0$ can be rewritten as $Z'+Z=0$ where $Z$ is the expression in the parenthesis of \eqref{eq: Ypart1}.
Now, replace $F_{n,2}$ by $EF_{n,2}$ where $E=diag(e_1,\cdots,e_n)$ is an arbitrary diagonal matrix with constant diagonal entries. Then, we have $Z'E+EZ=0$ or
$e_jZ_{ji}+e_iZ_{ij}=0$ for every $i,j$. Since $E$ is arbitrary, we must have $Z=0$, and hence
\begin{equation} \label{eq: getOptCon}
	D_{2,2}^{ - 1}\left[ {\begin{array}{*{20}{c}}
			{ - {D_{2,1}}D_{1,1}^{ - 1}}&I
		\end{array}} \right]\left[ {\begin{array}{*{20}{c}}
		{{S_{1,1}}}&{S_{1,2}^*}\\
		{{S_{2,1}}}&{\hat{S}'_{2,2} }
	\end{array}} \right]\left[ {\begin{array}{*{20}{c}}
	{ - {{({D_{1,1}}')}^{ - 1}}(D_{2,1}^*)'}\\
	I
\end{array}} \right]= (D_{2,2}^*)^{-1}{Y^*}
\end{equation}
for every possible $W$. This shows the only if part of the statement.

Now suppose that \eqref{eq: getOptCon} holds. We want to prove $M$, which is given in \eqref{eq: defM}, is positive semi-definite for all $W \in \mathcal{W}$. Denote by
\[G = \left[ {\begin{array}{*{20}{c}}
	{ - {D_{2,1}}D_{1,1}^{ - 1}}&I
	\end{array}} \right]\left[ {\begin{array}{*{20}{c}}
	{{F_{n,1}}}\\
	{{F_{n,2}}}
	\end{array}} \right],\quad {G^*} = \left[ {\begin{array}{*{20}{c}}
	{ - {D_{2,1}^*}D_{1,1}^{ - 1}}&I
	\end{array}} \right]\left[ {\begin{array}{*{20}{c}}
	{{F_{n,1}}}\\
	{{F_{n,2}^*}}
	\end{array}} \right].\]
Using the definition of $Y$ and $Y^*$ (\eqref{eq: defY}), one can show that $Y=E[GG']$, $Y^*=E[G^*(G^*)']$, and \eqref{eq: getOptCon} can be reformulated as $D_{2,2}^{-1}E[G(G^*)']=(D_{2,2}^*)^{-1}{Y^*}$.
Now we have
\begin{align}
	& E\left[ {\left( {D_{2,2}^{ - 1}G - {{(D_{2,2}^*)}^{ - 1}}{G^*}} \right)\left( {G'{{(D_{2,2}')}^{ - 1}} - ({G^*})'{{(D{{_{2,2}^*}^\prime })}^{ - 1}}} \right)} \right] \\
	&=D_{2,2}^{ - 1}Y{(D_{2,2}^\prime )^{ - 1}} -D_{2,2}^{ - 1} E[G(G^*)']{({D_{2,2}^*}^\prime )^{ - 1}}-(D_{2,2}^*)^{ - 1}E[G^*G']{(D_{2,2}^\prime )^{ - 1}} + {(D_{2,2}^*)^{ - 1}}{Y^*}{(D{_{2,2}^* }')^{ - 1}} \\
	& =D_{2,2}^{ - 1}Y{(D_{2,2}^\prime )^{ - 1}} -{(D_{2,2}^*)^{ - 1}}{Y^*}{(D{_{2,2}^* }')^{ - 1}}-{(D_{2,2}^*)^{ - 1}}{Y^*}{(D{_{2,2}^* }')^{ - 1}}+ {(D_{2,2}^*)^{ - 1}}{Y^*}{(D{_{2,2}^* }')^{ - 1}} \\
	& =D_{2,2}^{ - 1}Y{(D_{2,2}^\prime )^{ - 1}} - {(D_{2,2}^*)^{ - 1}}{Y^*}{(D{_{2,2}^* }')^{ - 1}}=M,
\end{align}
which implies that $M$ is positive semi-definite.\qed

\bigskip
\noindent{\textbf{Proof of Proposition \ref{prop:optW}}}:
	Inserting \eqref{eq: optW} into the definition of $D_{2,1}^*$, we obtain
	\[(D_{2,1}^*)'=nE[(\partial_{\theta_1}V(Y_{t_0},Y_{t_1}))'W^*(Y_{t_0}) ]=n( {Q_1}{C_1}+{Q_2}{C_2} + {Q_3}{C_3} ) \]
	and
	\[S_{1,2}^*=E[\tilde{f}_1V'(Y_{t_0},Y_{t_1})W^*(Y_{t_0}) ]={Q_4}{C_1}+{Q_3'}{C_2} +{Q_5}{C_3} .\]
	Substituting \eqref{eq: optW} into \eqref{eq:H2} for $H_2(y)$, we have
	\begin{align}
		& E[\partial_{\theta_1} V(Y_{t_0},Y_{t_1})|Y_{t_0}=y]{D_{1,1}^{ - 1}}S_{1,1}{(D_{1,1}')^{ - 1}}(n {Q_1}{C_1}+n{Q_2}{C_2} +n {Q_3}{C_3} ) \notag \\
		& - E[\partial_{\theta_1} V(Y_{t_0},Y_{t_1})|Y_{t_0}=y]{D_{1,1}^{ - 1}} ({Q_4}{C_1}+{Q_3'}{C_2} +{Q_5}{C_3})\notag \\
		& - \frac{1}{n}E[V\tilde{f}_1'|Y_{t_0}=y]{(D_{1,1}')^{ - 1}}(n{Q_1}{C_1} +n{Q_2}{C_2}+n{Q_3}{C_3})\notag \\
		& + E[\partial_{\theta_2} V(Y_{t_0},Y_{t_1})|Y_{t_0}=y]{C_1} + E[\partial_{\theta_1} V(Y_{t_0},Y_{t_1})|Y_{t_0}=y]{C_2} + E[V\tilde{f}'_1|Y_{t_0}=y]{C_3}. \notag
	\end{align}
Using $H_2(y)=H_1(y)C$ and \eqref{eq:H1} for $H_1(y)$, we obtain that the coefficient of $E[\partial_{\theta_1}V(Y_{t_0},Y_{t_1})|Y_{t_0}=y]$ and $E[V\tilde{f}_1'|Y_{t_0}=y]$ are equal to 0, and $C_1=C$. Since the optimal weighting matrix is only unique up to scale, we can normalize $C_1$ to $I$, and we have
\[\left\{ {\begin{array}{*{20}{c}}
{nD_{1,1}^{ - 1}{S_{1,1}}{(D_{1,1}')^{ - 1}}({Q_1} + {Q_2}{C_2} + {Q_3}{C_3}) - D_{1,1}^{ - 1}({Q_4} + {Q_3}'{C_2} + {Q_5}{C_3}) + {C_2} = 0},\\
{ - (D_{1,1}')^{ - 1}({Q_1}+{Q_2}{C_2} +{Q_3}{C_3}) + {C_3} = 0.}
\end{array}} \right.\]
Some simplifications give us the desired result. \qed

\bibliographystyle{chicago}
\bibliography{bibfile}

\begin{thebibliography}{}

\bibitem[\protect\citeauthoryear{A\"{i}t-Sahalia, Hansen, and
  Scheinkeman}{A\"{i}t-Sahalia et~al.}{2010}]{AHS}
A\"{i}t-Sahalia, Y., L.~P. Hansen, and J.~A. Scheinkeman (2010).
\newblock Operator methods for continuosu time {M}arkov processes.
\newblock In Y.~A\"{i}t-Sahalia and L.~P. Hansen (Eds.), {\em Handbook of
  Financial Econometrics: Tools and Techniques}, Chapter~1, pp.\  1--62.
  Amsterdam: North-Holland.

\bibitem[\protect\citeauthoryear{A{\"\i}t-Sahalia and Jacod}{A{\"\i}t-Sahalia
  and Jacod}{2009a}]{ait2009estimating}
A{\"\i}t-Sahalia, Y. and J.~Jacod (2009a).
\newblock Estimating the degree of activity of jumps in high frequency data.
\newblock {\em The Annals of Statistics\/}~{\em 37\/}(5A), 2202--2244.

\bibitem[\protect\citeauthoryear{A{\"\i}t-Sahalia and Jacod}{A{\"\i}t-Sahalia
  and Jacod}{2009b}]{ait2009testing}
A{\"\i}t-Sahalia, Y. and J.~Jacod (2009b).
\newblock Testing for jumps in a discretely observed process.
\newblock {\em The Annals of Statistics\/}~{\em 37\/}(1), 184--222.

\bibitem[\protect\citeauthoryear{A{\"\i}t-Sahalia and Jacod}{A{\"\i}t-Sahalia
  and Jacod}{2011}]{ait2011testing}
A{\"\i}t-Sahalia, Y. and J.~Jacod (2011).
\newblock Testing whether jumps have finite or infinite activity.
\newblock {\em The Annals of Statistics\/}~{\em 39\/}(3), 1689--1719.

\bibitem[\protect\citeauthoryear{A{\"\i}t-Sahalia, Jacod, and
  Li}{A{\"\i}t-Sahalia et~al.}{2012}]{ait2012testing}
A{\"\i}t-Sahalia, Y., J.~Jacod, and J.~Li (2012).
\newblock Testing for jumps in noisy high frequency data.
\newblock {\em Journal of Econometrics\/}~{\em 168\/}(2), 207--222.

\bibitem[\protect\citeauthoryear{Barndorff-Nielsen}{Barndorff-Nielsen}{}]{BarndorffNielsen}
Barndorff-Nielsen, O.~E.
\newblock Processes of {N}ormal {I}nverse {G}aussian type.
\newblock {\em Finance and Stochastics\/}~{\em 2\/}(1), 41--68.

\bibitem[\protect\citeauthoryear{Belomestny}{Belomestny}{2011}]{belomestny2011statistical}
Belomestny, D. (2011).
\newblock Statistical inference for time-changed {L}{\'e}vy processes via
  composite characteristic function estimation.
\newblock {\em The Annals of Statistics\/}~{\em 39\/}(4), 2205--2242.

\bibitem[\protect\citeauthoryear{Bertoin}{Bertoin}{1999}]{bertoin1999subordinators}
Bertoin, J. (1999).
\newblock Subordinators: examples and applications.
\newblock In {\em Lectures on probability theory and statistics}, pp.\  1--91.
  Berlin: Springer.

\bibitem[\protect\citeauthoryear{Bibby, Jacobsen, and S{\o}rensen}{Bibby
  et~al.}{2010a}]{BibbyJacobsenSorensen}
Bibby, B.~M., M.~Jacobsen, and M.~S{\o}rensen (2010a).
\newblock Estimating functions for discretely sampled diffusion-type models.
\newblock In Y.~A\"{i}tsahalia and L.~P. Hansen (Eds.), {\em Handbook of
  Financial Econometrics}, Volume~1, Chapter~4, pp.\  203--262. Amsterdam:
  North-Holland.

\bibitem[\protect\citeauthoryear{Bibby, Jacobsen, and S{\o}rensen}{Bibby
  et~al.}{2010b}]{BJSEstFun}
Bibby, B.~M., M.~Jacobsen, and M.~S{\o}rensen (2010b).
\newblock Estimating functions for discretely sampled diffusion-type models.
\newblock In Y.~A\"{i}t-Sahalia and L.~P. Hansen (Eds.), {\em Handbook of
  Financial Econometrics: Tools and Techniques}, Chapter~4, pp.\  203--268.
  Amsterdam: North-Holland.

\bibitem[\protect\citeauthoryear{Bibby and S{\o}rensen}{Bibby and
  S{\o}rensen}{1995}]{bibby1995martingale}
Bibby, B.~M. and M.~S{\o}rensen (1995).
\newblock Martingale estimation functions for discretely observed diffusion
  processes.
\newblock {\em Bernoulli\/}~{\em 1\/}(1), 17--39.

\bibitem[\protect\citeauthoryear{Bibby and S{\o}rensen}{Bibby and
  S{\o}rensen}{2001}]{bibby2001simplified}
Bibby, B.~M. and M.~S{\o}rensen (2001).
\newblock Simplified estimating functions for diffusion models with a
  high-dimensional parameter.
\newblock {\em Scandinavian Journal of Statistics\/}~{\em 28\/}(1), 99--112.

\bibitem[\protect\citeauthoryear{Billingsley}{Billingsley}{1961a}]{billingsley1961lindeberg}
Billingsley, P. (1961a).
\newblock The {L}indeberg-{L}{\'e}vy theorem for martingales.
\newblock {\em Proceedings of the American Mathematical Society\/}~{\em
  12\/}(5), 788--792.

\bibitem[\protect\citeauthoryear{Billingsley}{Billingsley}{1961b}]{billingsley1961statistical}
Billingsley, P. (1961b).
\newblock {\em Statistical inference for Markov processes}, Volume~2.
\newblock University of Chicago Press.

\bibitem[\protect\citeauthoryear{Bochner}{Bochner}{1949}]{Bochner}
Bochner, S. (1949).
\newblock Diffusion equations and stochastic processes.
\newblock {\em Proceedings of the National Academy of Sciences of the United
  States of America\/}~{\em 35\/}(7), 368--370.

\bibitem[\protect\citeauthoryear{Bull}{Bull}{2014}]{bull2014estimating}
Bull, A.~D. (2014).
\newblock Estimating time-changes in noisy {L}{\'e}vy models.
\newblock {\em The Annals of Statistics\/}~{\em 42\/}(5), 2026--2057.

\bibitem[\protect\citeauthoryear{Carr and Wu}{Carr and Wu}{2004}]{carr2004time}
Carr, P. and L.~Wu (2004).
\newblock Time-changed {L}{\'e}vy processes and option pricing.
\newblock {\em Journal of Financial economics\/}~{\em 71\/}(1), 113--141.

\bibitem[\protect\citeauthoryear{Conley, Hansen, Luttmer, and
  Scheinkman}{Conley et~al.}{1997}]{conley1997short}
Conley, T.~G., L.~P. Hansen, E.~G. Luttmer, and J.~A. Scheinkman (1997).
\newblock Short-term interest rates as subordinated diffusions.
\newblock {\em Review of Financial Studies\/}~{\em 10\/}(3), 525--577.

\bibitem[\protect\citeauthoryear{Cont and Tankov}{Cont and
  Tankov}{2004}]{ContTankov}
Cont, R. and P.~Tankov (2004).
\newblock {\em Financial Modeling with Jump Processes}.
\newblock Cambridge: Chapman \& Hall.

\bibitem[\protect\citeauthoryear{Figueroa-L{\'o}pez}{Figueroa-L{\'o}pez}{2009}]{figueroa2009nonparametric}
Figueroa-L{\'o}pez, J.~E. (2009).
\newblock Nonparametric estimation of time-changed {L}{\'e}vy models under
  high-frequency data.
\newblock {\em Advances in Applied Probability\/}~{\em 41\/}(04), 1161--1188.

\bibitem[\protect\citeauthoryear{Figueroa-L{\'o}pez}{Figueroa-L{\'o}pez}{2011}]{figueroa2011central}
Figueroa-L{\'o}pez, J.~E. (2011).
\newblock Central limit theorems for the non-parametric estimation of
  time-changed {L}{\'e}vy models.
\newblock {\em Scandinavian Journal of Statistics\/}~{\em 38\/}(4), 748--765.

\bibitem[\protect\citeauthoryear{Goard and Mazur}{Goard and
  Mazur}{2013}]{goard2013stochastic}
Goard, J. and M.~Mazur (2013).
\newblock Stochastic volatility models and the pricing of {VIX} options.
\newblock {\em Mathematical Finance\/}~{\em 23\/}(3), 439--458.

\bibitem[\protect\citeauthoryear{Godambe and Heyde}{Godambe and
  Heyde}{1987}]{godambe2010quasi}
Godambe, V. and C.~Heyde (1987).
\newblock Quasi-likelihood and optimal estimation, correspondent paper.
\newblock {\em International Statistical Review/Revue Internationale de
  Statistique\/}~{\em 55\/}(3), 231--244.

\bibitem[\protect\citeauthoryear{Hansen and Scheinkman}{Hansen and
  Scheinkman}{1995}]{hansen1995back}
Hansen, L.~P. and J.~A. Scheinkman (1995).
\newblock Back to the future: generating moment implications for
  continuous-time {M}arkov processes.
\newblock {\em Econometrica\/}~{\em 63\/}(4), 767--804.

\bibitem[\protect\citeauthoryear{Hansen, Scheinkman, and Touzi}{Hansen
  et~al.}{1998}]{hansen1998spectral}
Hansen, L.~P., J.~A. Scheinkman, and N.~Touzi (1998).
\newblock Spectral methods for identifying scalar diffusions.
\newblock {\em Journal of Econometrics\/}~{\em 86\/}(1), 1--32.

\bibitem[\protect\citeauthoryear{Karlin and Taylor}{Karlin and
  Taylor}{1981}]{karlin1981second}
Karlin, S. and H.~M. Taylor (1981).
\newblock {\em A second course in stochastic processes}, Volume~2.
\newblock New York: Academic Press.

\bibitem[\protect\citeauthoryear{Kessler}{Kessler}{2000}]{kessler2000simple}
Kessler, M. (2000).
\newblock Simple and explicit estimating functions for a discretely observed
  diffusion process.
\newblock {\em Scandinavian Journal of Statistics\/}~{\em 27\/}(1), 65--82.

\bibitem[\protect\citeauthoryear{Kessler, Lindner, and S{\o}rensen}{Kessler
  et~al.}{2012}]{KesslerLindnerSorensen12}
Kessler, M., A.~Lindner, and M.~S{\o}rensen (2012).
\newblock {\em Statistical methods for stochastic differential equations}.
\newblock Boca Raton: CRC Press.

\bibitem[\protect\citeauthoryear{Kessler and S{\o}rensen}{Kessler and
  S{\o}rensen}{1999}]{kessler1999estimating}
Kessler, M. and M.~S{\o}rensen (1999).
\newblock Estimating equations based on eigenfunctions for a discretely
  observed diffusion process.
\newblock {\em Bernoulli\/}~{\em 5\/}(2), 299--314.

\bibitem[\protect\citeauthoryear{Kutoyants}{Kutoyants}{2004}]{kutoyants2004statistical}
Kutoyants, Y.~A. (2004).
\newblock {\em Statistical inference for ergodic diffusion processes}.
\newblock London: Springer.

\bibitem[\protect\citeauthoryear{Larsen and S{\o}rensen}{Larsen and
  S{\o}rensen}{2007}]{larsen2007diffusion}
Larsen, K.~S. and M.~S{\o}rensen (2007).
\newblock Diffusion models for exchange rates in a target zone.
\newblock {\em Mathematical Finance\/}~{\em 17\/}(2), 285--306.

\bibitem[\protect\citeauthoryear{Ledoux and Van~Daele}{Ledoux and
  Van~Daele}{2010}]{ledoux2010solving}
Ledoux, V. and M.~Van~Daele (2010).
\newblock Solving {S}turm--{L}iouville problems by piecewise perturbation
  methods, revisited.
\newblock {\em Computer Physics Communications\/}~{\em 181\/}(8), 1335--1345.

\bibitem[\protect\citeauthoryear{Ledoux, Van~Daele, and Berghe}{Ledoux
  et~al.}{2004}]{ledoux2004cp}
Ledoux, V., M.~Van~Daele, and G.~V. Berghe (2004).
\newblock C{P} methods of higher order for {S}turm--{L}iouville and
  {S}chr{\"o}dinger equations.
\newblock {\em Computer Physics Communications\/}~{\em 162\/}(3), 151--165.

\bibitem[\protect\citeauthoryear{Li, Li, and Mendoza-Arriaga}{Li
  et~al.}{2016}]{LiLiMendozaAddSub}
Li, J., L.~Li, and R.~Mendoza-Arriaga (2016).
\newblock Additive subordination and its applications in finance.
\newblock {\em Finance and Stochastics\/}~{\em 20\/}(3), 589--634.

\bibitem[\protect\citeauthoryear{Li, Li, and Zhang}{Li et~al.}{2017}]{LLZVIX}
Li, J., L.~Li, and G.~Zhang (2017).
\newblock Pure jump models for pricing and hedging {VIX} derivatives.
\newblock {\em Journal of Economic Dynamics and Control\/}~{\em 74\/}(1),
  28--55.

\bibitem[\protect\citeauthoryear{Li and Linetsky}{Li and
  Linetsky}{2013}]{LiLinetskyOS}
Li, L. and V.~Linetsky (2013).
\newblock Optimal stopping and early exercise: an eigenfunction expansion
  approach.
\newblock {\em Operations Research\/}~{\em 61\/}(3), 625--643.

\bibitem[\protect\citeauthoryear{Li and Linetsky}{Li and
  Linetsky}{2014}]{LiLinetsky}
Li, L. and V.~Linetsky (2014).
\newblock Time-changed {O}rnstein-{U}hlenbeck processes and their applications
  in commodity derivative models.
\newblock {\em Mathematical Finance\/}~{\em 24\/}(2), 289--330.

\bibitem[\protect\citeauthoryear{Li and Linetsky}{Li and
  Linetsky}{2015}]{LiLinetskyFP}
Li, L. and V.~Linetsky (2015).
\newblock Discretely monitored first passage problems and barrier options: an
  eigenfunction expansion approach.
\newblock {\em Finance and Stochastics\/}~{\em 19\/}(4), 941--977.

\bibitem[\protect\citeauthoryear{Li and Zhang}{Li and
  Zhang}{2016}]{LiZhangSIAM}
Li, L. and G.~Zhang (2016).
\newblock Option pricing in some non-{L}\'{e}vy jump models.
\newblock {\em SIAM Journal on Scientific Computing\/}~{\em 38\/}(4),
  B539--B569.

\bibitem[\protect\citeauthoryear{Li and Zhang}{Li and Zhang}{2017}]{LiZhangErr}
Li, L. and G.~Zhang (2017).
\newblock Error analysis of finite difference and {M}arkov chain approximations
  for option pricing.
\newblock {\em Mathematical Finance\/}.
\newblock Forthcoming.

\bibitem[\protect\citeauthoryear{Linetsky}{Linetsky}{2008}]{linetsky2007spectral}
Linetsky, V. (2008).
\newblock Spectral methods in derivatives pricing.
\newblock In J.~Birge and V.~Linetsky (Eds.), {\em Handbook of Financial
  Engineering, Handbooks in Operations Research and Management}, Chapter~6.
  Amsterdam: Elsevier.

\bibitem[\protect\citeauthoryear{Mendoza-Arriaga, Carr, and
  Linetsky}{Mendoza-Arriaga et~al.}{2010}]{MendozaCarrLinetsky}
Mendoza-Arriaga, R., P.~Carr, and V.~Linetsky (2010).
\newblock Time changed {M}arkov processes in unified credit-equity modeling.
\newblock {\em Mathematical Finance\/}~{\em 20\/}(4), 527--569.

\bibitem[\protect\citeauthoryear{Pryce}{Pryce}{1993}]{pryce1993numerical}
Pryce, J.~D. (1993).
\newblock {\em Numerical solution of {S}turm-{L}iouville problems}.
\newblock Oxford University Press.

\bibitem[\protect\citeauthoryear{Schilling, Song, and Vondracek}{Schilling
  et~al.}{2012}]{schilling2012bernstein}
Schilling, R.~L., R.~Song, and Z.~Vondracek (2012).
\newblock {\em Bernstein functions: theory and applications}, Volume~37.
\newblock Berlin/Boston: Walter de Gruyter.

\bibitem[\protect\citeauthoryear{S{\o}rensen}{S{\o}rensen}{2004}]{HSorensen04Survey}
S{\o}rensen, H. (2004).
\newblock Parametric inference for diffusion processes observed at discrete
  points in time: a survey.
\newblock {\em International Statistical Review\/}~{\em 72\/}(3), 337--354.

\bibitem[\protect\citeauthoryear{S{\o}rensen}{S{\o}rensen}{1997}]{SorensenReview}
S{\o}rensen, M. (1997).
\newblock Estimating functions for discretely observed diffusions: A review.
\newblock {\em Lecture Notes-Monograph Series\/}, 305--325.

\bibitem[\protect\citeauthoryear{S{\o}rensen}{S{\o}rensen}{1999}]{sorensen1999asymptotics}
S{\o}rensen, M. (1999).
\newblock On asymptotics of estimating functions.
\newblock {\em Brazilian Journal of Probability and Statistics\/}~{\em
  13\/}(2), 111--136.

\bibitem[\protect\citeauthoryear{S{\o}rensen}{S{\o}rensen}{2012}]{sorensen2012estimating}
S{\o}rensen, M. (2012).
\newblock Estimating functions for diffusion-type processes.
\newblock In {\em Statistical Methods for Stochastic Differential Equations}.
  Boca Raton: CRC Press.

\bibitem[\protect\citeauthoryear{Todorov and Tauchen}{Todorov and
  Tauchen}{2010}]{todorov2010activity}
Todorov, V. and G.~Tauchen (2010).
\newblock Activity signature functions for high-frequency data analysis.
\newblock {\em Journal of Econometrics\/}~{\em 154\/}(2), 125--138.

\bibitem[\protect\citeauthoryear{Todorov and Tauchen}{Todorov and
  Tauchen}{2011}]{TodorovTauchen}
Todorov, V. and G.~Tauchen (2011).
\newblock Volatility jumps.
\newblock {\em Journal of Business \& Economic Statistics\/}~{\em 29\/}(3),
  356--371.

\end{thebibliography}

\end{document}